\documentclass[reqno]{amsart}
\usepackage[T1]{fontenc}
\usepackage[english]{babel}
\usepackage{amsmath}
\usepackage{amsthm}
\usepackage{amssymb}
\usepackage[utf8]{inputenc}
\usepackage[T2A]{fontenc}
\usepackage{bbm,wasysym}
\usepackage{hyperref}
\usepackage{bbold}
\usepackage{graphicx}
\usepackage{xcolor}
\usepackage[numbers,square]{natbib}

\newcommand{\stirlingfirstkind}[2]{%
  \genfrac{[}{]}{0pt}{}{#1}{#2}}

\newcommand{\stirling}[2]{%
  \stirlingfirstkind{#1}{#2}}

\newcommand{\stirlingb}[2]{%
  \mathrm{B}\mkern-2mu\stirlingfirstkind{#1}{#2}}

\newcommand{\pwend}[2]{%
  p_{\mathrm{Wend}}\mkern-2mu
  \genfrac{(}{)}{0pt}{}{#1}{#2}}

\newcommand{\pwalk}[2]{%
  p_{\mathrm{walk}}\mkern-2mu
  \genfrac{(}{)}{0pt}{}{#1}{#2}}

\newcommand{\pbridge}[2]{%
  p_{\mathrm{br}}\mkern-2mu
  \genfrac{(}{)}{0pt}{}{#1}{#2}}

\newcommand{\qwend}[2]{%
  q_{\mathrm{Wend}}\mkern-2mu
  \genfrac{(}{)}{0pt}{}{#1}{#2}}

\newcommand{\qwalk}[2]{%
  q_{\mathrm{walk}}\mkern-2mu
  \genfrac{(}{)}{0pt}{}{#1}{#2}}

\newcommand{\qbridge}[2]{%
  q_{\mathrm{br}}\mkern-2mu
  \genfrac{(}{)}{0pt}{}{#1}{#2}}

\DeclareMathOperator{\sgn}{sgn}

\DeclareMathOperator{\Conv}{Conv}

\DeclareMathOperator{\relint}{relint}
\DeclareMathOperator{\Int}{Int}

\newcommand{\pr}{\mathbb{P}}
\newcommand{\R}{\mathbb{R}}
\newcommand{\eps}{\varepsilon}

\theoremstyle{plain}
\newtheorem{theorem}{Theorem}
\newtheorem{lemma}[theorem]{Lemma}
\newtheorem{proposition}[theorem]{Proposition}
\newtheorem{corollary}[theorem]{Corollary}

\theoremstyle{definition}

\theoremstyle{remark}
\newtheorem{remark}[theorem]{Remark}

\title[Random Convex Hulls via Wall Crossing]{Absorption Probabilities for Random Convex Hulls: Distribution-Freeness via the Wall-Crossing Method}
\date{}

\begin{document}

\author[Kabluchko]{Zakhar Kabluchko}
\address{Institut f\"ur Mathematische Stochastik,
	Universität M\"unster,  Germany}
\email{zakhar.kabluchko@uni-muenster.de}

\author[Tarasov]{Alexander Tarasov}
\address{Universit\"at Hildesheim, Germany}
\email{tarasov@uni-hildesheim.de}

\subjclass[2020]{Primary 60D05; Secondary 52A22, 60G50, 60C05.}

\keywords{Geometric probability, random convex hulls, absorption probabilities,
random walks and bridges, distribution-free formulas, wall-crossing method,
permutation records, Stirling numbers and their type-B analogues,
Sparre Andersen's theorem, Wendel's theorem.}

\begin{abstract}
    We consider the probability that the convex hull of the first $n$ partial sums of a
    $d$-dimensional random walk contains the origin. Under symmetric
    exchangeability of the increments and a general-position assumption, this absorption probability is distribution-free and admits an explicit formula, previously obtained by Kabluchko, Vysotsky and Zaporozhets [Geom. Funct. Anal. 27 (2017)] using characteristic polynomials of hyperplane arrangements. We give a different proof, based on a wall-crossing method
    which we develop here. Starting from a deterministic configuration of
    increments, we count the signed permutations for which the convex hull of
    the corresponding partial sums contains the origin and show that this count
    remains unchanged under generic deformations of the increments, and hence
    is the same for all configurations outside a natural exceptional set
of measure zero. Evaluating the invariant
    at a single well-chosen configuration reduces the remaining calculation to
    the enumeration of permutation records combined with Wendel's theorem. Our method also reproves Wendel's theorem on convex hulls of random points with a
sign-flip-invariant joint distribution and, in dimension one,
Sparre Andersen's theorem.
    Finally, we derive new probabilistic representations and recurrence
    relations for the absorption probabilities of random-walk convex hulls and
    their random-bridge analogues.
\end{abstract}

\maketitle

\section{Introduction}\label{sec:intro}

\subsection{Statement of results}
A classical distribution-free theorem of Sparre
Andersen~\cite{SA49} asserts that if
$\xi_1,\dots,\xi_n$ are i.i.d.\ real-valued random variables with a
continuous distribution symmetric about the origin, then their partial sums
\[
    S_k:=\xi_1+\cdots+\xi_k,
    \qquad 1\leq k\leq n,
\]
satisfy
\begin{align}
    \pr(S_1>0,\dots,S_n>0)
    &=
    \frac{(2n-1)!!}{2^n n!}
    =
    \frac{1}{2^{2n}}\binom{2n}{n}.
    \label{eq:sparreandersen}
\end{align}
Several proofs of Sparre Andersen's formula are available; see, for example, Feller~\cite[Chapter~XII]{FellerVol2}, Spitzer~\cite{Spitzer1956}, Durrett~\cite[Theorem~4.3.7]{Durrett2010}, Majumdar~\cite[Section~3.2]{Majumdar2010}, Pinsky~\cite[Section~12]{Pinsky2021}, Berger and Béthencourt~\cite{BergerBethencourt2025}.

By symmetry, the same formula holds for the probability that all partial
sums are negative. Let $\Conv(\cdot)$ denote the convex hull operator. In one dimension,
$
0\notin\Conv(S_1,\dots,S_n)
$
if and only if the partial sums are either all positive or all negative.
Since these two events are disjoint, it follows that
\[
    \pr\bigl(0\notin\Conv(S_1,\dots,S_n)\bigr)
    =
    \frac{2}{2^{2n}}\binom{2n}{n}.
\]

This observation suggests that a natural $d$-dimensional extension of
Sparre Andersen's theorem should concern the probability that the convex
hull of a $d$-dimensional random walk contains (or avoids) the origin. This
problem was studied in dimension $d=2$
in~\cite{vysotsky_zaporozhets_convex_hulls_TAMS} and in arbitrary dimension
in~\cite{KVZ17_GAFA}.

The following $d$-dimensional version of Sparre Andersen's theorem was
proved in~\cite{KVZ17_GAFA}.

\begin{theorem}[Absorption probability for a random walk] \label{thm:1}
Let $d\geq 1$ and $n\geq d+1$. Let $\xi_1,\dots,\xi_n$ be (possibly dependent) random $d$-dimensional vectors with partial sums
$$
S_i = \xi_1 + \dots + \xi_i,\quad  1\leq i\leq n,\quad  S_0=0.
$$
We impose the following conditions on the joint distribution of the tuple $(\xi_1,\dots, \xi_n)$:
\begin{itemize}
\item [(i)] \emph{Symmetric exchangeability:} For every permutation $\sigma$ of $\{1,\dots,n\}$ and every choice of
signs $\eps_1,\dots,\eps_n\in\{+1,-1\}$, we have the equality in
distribution
    $$
    (\xi_1,\dots,\xi_n) \stackrel{d}{=}(\eps_1 \xi_{\sigma(1)}, \dots, \eps_n \xi_{\sigma(n)}).
    $$
\item[(ii)] \emph{General position of partial sums:}
For every $1\leq i_1 < \dots < i_d\leq n$, the probability that the vectors $S_{i_1}, \dots,S_{i_d}$ are linearly dependent is $0$.
\end{itemize}

Then
\begin{align}
    \pwalk{n}{d}
    &:=
    \pr\big(0 \in \Conv(S_1, S_2, \dots, S_n)\big)
    =
    \frac{2}{2^{n}n!}
    \sum_{r\geq 0}
    \stirlingb{n}{d+1+2r},
    \label{eq:def-p-walk}
    \\
        \qwalk{n}{d}
    &:=
    \pr\big(0 \notin \Conv(S_1, S_2, \dots, S_n)\big)
    =
    \frac{2}{2^{n}n!}
    \sum_{r\geq 0}
    \stirlingb{n}{d-1-2r}.
    \label{eq:def-q-walk}
\end{align}
Here, the numbers $\stirlingb{n}{k}$ with $n\in \{1,2,\dots\}$ and $k\in \mathbb Z$ are the $B$-Stirling numbers of the first kind defined by the generating function
\begin{align*}
    (t+1)(t+3) \cdots (t+2n-1)
    = \sum_{k\in \mathbb Z} \stirlingb{n}{k}\, t^k.
\end{align*}
In particular, $\stirlingb{n}{k}= 0$ for $n\in \{1,2,\dots\}$ and $k\notin\{0,\dots,n\}$, so that the sums appearing above terminate after finitely many nonzero terms.
\end{theorem}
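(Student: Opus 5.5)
We will derive the formula through the wall-crossing method outlined in the abstract. Fix a deterministic tuple of increments $x=(x_1,\dots,x_n)\in(\R^d)^n$. For a signed permutation $g=(\sigma,\eps)$ in the hyperoctahedral group $B_n$, with $|B_n|=2^n n!$, set $g x:=(\eps_1x_{\sigma(1)},\dots,\eps_n x_{\sigma(n)})$. Let $S_k(gx)$ denote its partial sums, and define
\[
N(x):=\#\{g\in B_n:\ 0\in\Conv(S_1(gx),\dots,S_n(gx))\}.
$$
By condition (i), $\pwalk{n}{d}=\mathbb E[N(\xi)]/(2^nn!)$. It therefore suffices to prove two things. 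First, $N(x)$ is constant on the complement of an exceptional set $E$. Second, $\xi\notin E$ almost surely, which is where condition (ii) enters. We then compute the constant at one convenient configuration. The set $E$ is the union of the linear subspaces where some $d$ of the signed partial-sum vectors $\sum_{j\in I}\pm x_j$, taken over suitable nested index blocks, become linearly dependent. For each $g$ these are exactly the events excluded by (ii) after applying $g$. So (i) and (ii) together give $\pr(\xi\in E)=0$.

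\textbf{Invariance (main obstacle).} The complement of $E$ is a union of open chambers, and the indicator $0\in\Conv(\dots)$ is locally constant on each chamber. We join any two generic configurations by a path meeting $E$ transversally, one wall at a time. At a generic point of a wall, exactly one $d$-tuple of partial sums $S_{i_1}(gx),\dots,S_{i_d}(gx)$ becomes dependent. The origin then crosses a facet of the hull, and the indicator of $0\in\Conv$ can change for some $g$. The key step is to pair each such $g$ with a partner $g'$ whose change is opposite.

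The natural candidate for $g'$ comes from a reflection-type move on the increments between the relevant indices. One example is reversing the order and flipping the signs of the block $\xi_{i_k+1},\dots,\xi_{i_{k+1}}$. This move maps $S_{i_{k+1}}-S_{i_k}$ to its negative, so it preserves the wall while swapping the side on which the origin lies. We must check that the partner acts on the same wall and produces a flip in the opposite direction, so that the net change in $N$ is zero. Working this out cleanly, including the cases where the facet involves $S_n$ or where the linear dependence is degenerate, is where we expect the real difficulty to lie.

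\textbf{Evaluation.} Once $N$ is known to be an invariant, we pick a configuration where the count is tractable. One choice uses rapidly decreasing scales, $x_j=\lambda^{j}u_j$ with $\lambda\to 0$ (or $\lambda\to\infty$) and $u_j$ generic. For a signed permutation, each partial sum is then dominated by its leading term. Up to a vanishing error, the hull of $S_1,\dots,S_n$ becomes the hull of the distinct dominant vectors $\pm u_j$ that occur as running records of the permutation. Deciding whether $0$ lies in that hull is then a Wendel-type question about $m$ signed generic vectors in $\R^d$. By Wendel's theorem, averaged over signs, its probability is $2^{-m+1}\sum_{r\ge0}\binom{m-1}{d+2r}$, in the equivalent form counting sign patterns whose cone is not pointed.

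The number of signed permutations with exactly $m$ records is given by the generating function $\prod_{k}(t+2k-1)$, whose coefficients are $\stirlingb{n}{m}$, with the sign choices accounted for. We then combine the record count with Wendel's count and use a binomial identity to collapse the sum to $\frac{2}{2^nn!}\sum_r\stirlingb{n}{d+1+2r}$. Equation \eqref{eq:def-q-walk} follows by complement, using $\sum_k\stirlingb{n}{k}=2^nn!$ and the parity identity $\sum_k(-1)^k\stirlingb{n}{k}=0$.
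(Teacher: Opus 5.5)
\textbf{There is a genuine gap: the invariance step is left open, and the involution you propose fails for $d\ge 2$.} Your overall plan is the paper's: reduce to a count over $\mathfrak B_n$, prove invariance by wall crossing, then evaluate at a lacunary configuration via records and Wendel. But the step you flag as the main obstacle is not carried out.

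Here is why single-block reversal fails. Write $z_1,\dots,z_d$ for the block sums, so $S_{i_j}=z_1+\dots+z_j$. At a generic crossing the origin lies in the relative interior of the simplex $\Conv(S_{i_1},\dots,S_{i_d})$. Equivalently, $\sum_k c_k z_k=0$ with $c_1>c_2>\dots>c_d>0$, and this is the only linear relation up to scaling. Reversing and negating a single block replaces one $z_k$ by $-z_k$. The only relation among the new block sums then has coefficients of both signs. Hence the origin is not in the convex hull of the new $S'_{i_1},\dots,S'_{i_d}$ at all, and the partner is not a boundary configuration that could compensate.

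For a concrete case take $d=2$, $S_1=x_1$, $S_2=-cx_1$ with $c>0$. Negating $x_2$ alone gives $S_2'=(2+c)x_1$. Negating $x_1$ alone gives $S_1'=-x_1$, $S_2'=-(2+c)x_1$. Neither segment contains $0$, whereas negating both gives $S_1'=-x_1$, $S_2'=cx_1$.

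\textbf{The move that works} is the paper's $\rho_I$: reverse and negate \emph{all} $d$ blocks $(x_{i_k+1},\dots,x_{i_{k+1}})$, $k=0,\dots,d-1$ (with $i_0=0$), simultaneously, and leave the increments after $i_d$ untouched.
\begin{itemize}
\item It gives $S_i\mapsto -S_i$ for every $i\in I$, for every configuration.
\item The other partial sums become $S_{i_k+i_{k+1}-\ell}-S_{i_k}-S_{i_{k+1}}$ inside the blocks and $S_\ell-2S_{i_d}$ in the tail. Pairing with the facet normal $\nu$ shows they stay strictly on the positive side.
\item Thus the facet $F$ becomes $-F$ with the same normal. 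Also $I$ is preserved, so $g\mapsto \rho_{I(g)}g$ is an involution of the set of boundary elements.
\item The level $\psi_g(t)=\langle \nu(t),S_i(g\gamma(t))\rangle$ of the facet hyperplane satisfies $\psi_{\iota(g)}(t)=-\psi_g(t)$ along the whole path.
\item Together with the criterion $0\in H(g\gamma(t))\iff\psi_g(t)\le 0$ for small $|t|$ (proved via barycentric coordinates), exactly one member of each pair contains the origin for small $t\neq 0$.
\end{itemize}
No special case for $S_n$ arises, since the tail is then empty. Degenerate dependences are avoided by choosing the path so that, for each $g$, at most one $d$-set is dependent at each crossing time. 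Note that the walls are determinantal hypersurfaces, not linear subspaces, so the existence of such a path needs its own argument.

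\textbf{The evaluation follows the paper's route but misstates both ingredients.}
\begin{itemize}
\item The number of signed permutations whose underlying permutation has $m$ records is $2^n\stirling{n}{m}$, not $\stirlingb{n}{m}$. The latter counts signed permutations with exactly $m$ records carrying the sign $+$.
\item Wendel's absorption probability for $m$ points is $2^{-(m-1)}\sum_{j\ge d}\binom{m-1}{j}=2^{-(m-1)}\sum_{r\ge 0}\binom{m}{d+1+2r}$. Your version with $\binom{m-1}{d+2r}$ already fails for $m=3$, $d=1$, giving $1/2$ instead of $3/4$.
\end{itemize}
Once corrected, the number of non-absorbing elements is $\sum_m \stirling{n}{m}2^{n-m}\cdot 2\sum_{j=0}^{d-1}\binom{m-1}{j}$. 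This collapses to $2\sum_{r\ge0}\stirlingb{n}{d-1-2r}$ using $\sum_m\stirling{n}{m}2^{n-m}(1+t)^m=\prod_{k=1}^n(t+2k-1)$ and extracting coefficients after dividing by $1+t$. You also need to check that the lacunary configuration is itself regular; the paper does this by a leading-term computation of the relevant determinants.
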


Another seemingly unrelated distribution-free result on absorption probabilities for random convex hulls is Wendel's classical theorem~\cite{Wendel62}; see also~\cite[Theorem~8.2.1]{schneider_weil_book}.

\begin{theorem}[Wendel's formula]
\label{thm:wendel}
Let $d\geq 1$ and $n\geq d+1$.  Let  $\xi_1,\dots, \xi_n$ be (possibly dependent) random vectors in $\R^d$ such that the following conditions hold:
\begin{itemize}
\item[(i)] \emph{Sign-flip invariance}:  For every vector of signs $(\eps_1,\dots, \eps_n)\in \{+1,-1\}^n$, we have the distributional equality
$$
(\eps_1 \xi_1,\dots, \eps_n \xi_n)
\stackrel{d}{=} (\xi_1,\dots, \xi_n).
$$
\item[(ii)] \emph{General position}:  For every $1\leq i_1 < \dots < i_d\leq n$, the probability that the vectors $\xi_{i_1}, \dots,\xi_{i_d}$ are linearly dependent is $0$.
\end{itemize}
Then
\begin{align}
    \pwend{n}{d}
    &:=
    \pr[0\in \Conv(\xi_1,\dots, \xi_n)]
    =
    \frac{1}{2^{n-1}}
    \sum_{j=d}^{n-1}
    \binom{n-1}{j},
    \label{eq:def-p-Wend}
    \\
    \qwend{n}{d}
    &:=
    \pr[0\notin \Conv(\xi_1,\dots, \xi_n)]
    =
    \frac{1}{2^{n-1}}
    \sum_{j=0}^{d-1}
    \binom{n-1}{j}.
    \label{eq:def-q-Wend}
\end{align}
\end{theorem}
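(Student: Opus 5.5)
We prove Wendel's formula (Theorem~\ref{thm:wendel}) by conditioning on the unordered configuration and reducing it to a deterministic count over sign patterns. Fix a realization $x_1,\dots,x_n\in\R^d$ in general position, meaning every $d$ of them are linearly independent. By sign-flip invariance,
\[
\qwend{n}{d}=\mathbb E\Bigl[2^{-n}\#\bigl\{\eps\in\{\pm1\}^n:\ 0\notin\Conv(\eps_1\xi_1,\dots,\eps_n\xi_n)\bigr\}\Bigr].
\]
It therefore suffices to show that for every configuration in general position,
\[
N(x_1,\dots,x_n):=\#\bigl\{\eps:\ 0\notin\Conv(\eps_ix_i)\bigr\}=2\sum_{j=0}^{d-1}\binom{n-1}{j},
\]
and that this count does not depend on the configuration.

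For a given $\eps$, the condition $0\notin\Conv(\eps_ix_i)$ holds iff there is a $u$ with $\langle u,\eps_ix_i\rangle>0$ for all $i$. By general position no $d$ of the points span a proper subspace, so a separating functional exists strictly. Hence $N$ is the number of sign vectors realized by $u\mapsto(\sgn\langle u,x_i\rangle)_i$, which equals the number of chambers of the central arrangement of hyperplanes $x_i^\perp$.

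We then run the wall-crossing argument. Deform $x_1,\dots,x_n$ along a generic path. The count $N$ can only change when general position fails, and generically this happens when some $d$ of the vectors become linearly dependent, i.e.\ $x_{i_1},\dots,x_{i_d}$ fall into a common hyperplane. Near such a crossing, only sign patterns $\eps$ for which $0$ lies on the boundary of the cone generated by those $d$ vectors are affected. Analysing the local picture, we check that the chambers that disappear on one side are matched bijectively by chambers that appear on the other. Concretely, a cell of the restricted arrangement in the $(d-1)$-dimensional wall flips orientation, and by symmetry under $\eps\mapsto-\eps$ the counts balance. This local check is the main obstacle, and we would try it first. Since the set of degenerate configurations has codimension one and its complement, modulo these walls, is connected via generic paths, $N$ is constant on the general-position set.

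Finally, we evaluate $N$ at a convenient configuration, namely points on the moment curve $x_i=(1,t_i,\dots,t_i^{d-1})$ with $t_1<\dots<t_n$. A sign vector is realized iff it is the sign pattern of a nonzero polynomial of degree at most $d-1$ evaluated at the $t_i$. These are exactly the patterns with at most $d-1$ sign changes among consecutive positions, and there are
\[
2\sum_{j=0}^{d-1}\binom{n-1}{j}
\]
of them. Alternatively, we can use the standard recursion $N(n,d)=N(n-1,d)+N(n-1,d-1)$, obtained by adding one hyperplane and counting the chambers of the arrangement it meets. Dividing by $2^n$ gives $\qwend{n}{d}$, and taking complements gives $\pwend{n}{d}$ via $\sum_{j=0}^{n-1}\binom{n-1}{j}=2^{n-1}$.
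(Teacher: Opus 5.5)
The reduction to the deterministic count $N$, the identification of $N$ with the number of chambers of the arrangement $\{x_i^\perp\}$, and the moment-curve evaluation are all sound. The last of these is exactly the paper's computation.

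The gap is the wall-crossing step itself. It carries the entire constancy claim, and you leave it as ``the main obstacle''. The mechanism you suggest also fails as stated. The global flip $\eps\mapsto-\eps$ replaces $\Conv(\eps_ix_i)$ by $-\Conv(\eps_ix_i)$, so it pairs sign patterns with the \emph{same} absorption status and can never produce a gain/loss cancellation.

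What is actually needed is the following. Take a crossing time $t=0$ with a unique degenerate $d$-set $I$. Every $\eps$ with $0\in\partial\Conv(\eps_ix_i(0))$ then has $0\in\relint\Conv(\eps_ix_i(0):i\in I)$, and all $\eps_mx_m(0)$, $m\notin I$, lie strictly on one side of the hyperplane spanned by the $x_i(0)$, $i\in I$. This step uses both the uniqueness of $I$ and $n\geq d+1$.

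The pairing must flip only the signs indexed by $I$. This flip preserves the set of boundary patterns and the normal $\nu$. Let $\psi_\eps(t)$ be the signed offset of $\operatorname{aff}(\eps_ix_i(t):i\in I)$ along a continuously chosen normal $\nu(t)$. Then one gets:
\begin{itemize}
\item $\psi_{\iota(\eps)}=-\psi_\eps$;
\item $\psi_\eps(t)\neq0$ for $t\neq0$;
\item $0\in\Conv(\eps_ix_i(t))\iff\psi_\eps(t)\leq0$ for small $|t|$, where the backward implication comes from barycentric coordinates with respect to $I\cup\{m_0\}$.
\end{itemize}
Only this shows that exactly one member of each pair contains the origin on either side of the wall. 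You also just assert that two regular configurations can be joined by a path meeting the walls finitely often and never at a point lying on two walls. That needs an argument as well.

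The recursion you mention only as an alternative evaluation actually gives a complete proof on its own, with no deformation at all, provided you prove it for \emph{every} regular configuration. Adding $x_n^\perp$ to the generic arrangement $\{x_i^\perp\}_{i<n}$ splits exactly the chambers it meets into two. These chambers correspond bijectively to the chambers of the induced arrangement $\{x_i^\perp\cap x_n^\perp\}_{i<n}$ in $x_n^\perp\cong\R^{d-1}$. That induced arrangement is again generic, because any $d-1$ of the $x_i$ together with $x_n$ are linearly independent.

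With $N(n,1)=N(1,d)=2$, this gives $N=2\sum_{j=0}^{d-1}\binom{n-1}{j}$ for each regular configuration separately. This is the classical Schl\"afli--Cover count, and independence of the configuration comes for free. Used only at one convenient configuration, as you propose, it does not fill the gap above.
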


The proofs of Theorems~\ref{thm:1} and~\ref{thm:wendel} given in~\cite{KVZ17_GAFA} are based on the theory of hyperplane arrangements and their characteristic polynomials. For Theorem~\ref{thm:1}, the absorption event is interpreted in terms of intersections between a linear subspace of codimension $d$ and the Weyl chambers of type $B_n$ in $\R^n$. It is then shown that every generic linear subspace of fixed codimension intersects the same number of Weyl chambers,  and this number is computed using Whitney's formula for the
characteristic polynomial together with Zaslavsky's theorem expressing the number of chambers through
that polynomial. In particular, the numbers $\stirlingb{n}{k}$ are, up to alternating signs, the coefficients of the characteristic polynomial of the reflection arrangement associated with the group $\mathfrak B_n$. This group acts on $\R^n$ by permuting the coordinates and changing the signs of an arbitrary subset of them. The symmetric-exchangeability assumption in Theorem~\ref{thm:1} is precisely invariance under this action.
A similar method was used in~\cite{KVZ17_GAFA} to reprove Wendel's theorem and to establish a random-bridge analogue of Theorem~\ref{thm:1}, which will be stated in Section~\ref{sec:bridge}. The relevant reflection groups are $(\mathbb Z/2\mathbb Z)^n$ in Wendel's setting, acting on $\R^n$ by coordinate sign changes, and the symmetric group $\mathfrak S_n$ in the bridge setting, acting by coordinate permutations. Moreover,~\cite{KVZ17_Advances} established a result unifying these theorems: it considers an arbitrary number of random walks and bridges in $\R^d$ and computes the probability that their joint convex hull contains the origin. A single walk recovers Theorem~\ref{thm:1}, while $n$ walks of length $1$ recover Wendel's formula. The reflection group underlying the general result is a direct product of groups of the form $\mathfrak B_m$ and $\mathfrak S_m$.

\medskip
The purpose of the present paper is to give alternative proofs of the
absorption formulas for Wendel's model, random walks, and random bridges
that entirely avoid the theory of hyperplane arrangements.
Our
approach is based on a deformation argument, which we call the
\emph{wall-crossing method}.

\subsection{Outline of the method.}
We first illustrate the wall-crossing method in the setting of Theorem~\ref{thm:1}.
The method is essentially deterministic. We start with
vectors $x_1,\dots,x_n\in\R^d$, which play the role of the increments, and
arrange them as the columns of a matrix
\[
    X=(x_1\,|\,\dots\,|\,x_n)\in\R^{d\times n}.
\]
We then consider all $2^n n!$ signed permutations of these vectors. More
precisely, for every permutation $\sigma$ of $\{1,\dots,n\}$ and every
vector of signs $\eps=(\eps_1,\dots,\eps_n)\in\{\pm1\}^n$, we form the
polytope
\[
    \Conv\left(
        \sum_{i=1}^k \eps_i x_{\sigma(i)}
        :1\leq k\leq n
    \right)
    \subseteq\R^d.
\]
This gives a collection of $2^n n!$ convex polytopes associated
with $X$.  (Here and below, these polytopes are counted with multiplicity, according to the signed permutations that produce them.) Our aim is to show that, for every $X$ outside an exceptional set
$\mathcal D\subseteq \R^{d\times n}$ of Lebesgue measure zero, exactly
\[
    2\sum_{r\geq0}\stirlingb{n}{d+1+2r}
\]
of these polytopes contain the origin. A precise description of
$\mathcal D$ will be given below. Dividing this number by $2^n n!$ yields
the absorption probability in Theorem~\ref{thm:1}. In fact, this
deterministic statement is equivalent to that theorem.

\smallskip
The proof consists of two main parts.

\smallskip
\emph{Part 1: Constancy outside the exceptional set.}
We first show that the number of polytopes containing the origin is
independent of $X$ outside $\mathcal D$. To this end, we let $X$ vary
continuously in $\R^{d\times n}$. As long as the origin does not lie on the
boundary of any of the $2^n n!$ polytopes, the number of polytopes
containing the origin remains locally constant.

The configurations for which the origin lies on the boundary of at least
one polytope form a finite union of algebraic hypersurfaces in
$\R^{d\times n}$, which we call \emph{walls}. It therefore remains to
analyze what happens when $X$ crosses a wall. It suffices to consider a generic wall crossing at which, for each signed
permutation, at most one relevant $d$-tuple of partial sums becomes
linearly dependent. Configurations violating this condition have codimension at
least $2$ and can be avoided
by a suitable choice of path.  The key step is
the following assertion.
\begin{quote}
Let $X_0$ be a generic wall-crossing point, and consider only the polytopes whose
boundaries contain the origin. When $X_0$ is perturbed to  a nearby configuration $X$
off the wall, these polytopes split into two classes of
\textbf{equal cardinality}: those in the first class contain the origin,
whereas those in the second do not.  Thus, whenever one polytope acquires the origin, another loses it.
\end{quote}
Consequently, the number of polytopes containing the origin is the same
before and after a simple wall crossing, although its value at the
crossing point itself may be different.

\smallskip
\emph{Part 2: Computation of the constant.}
It remains to evaluate this number for one suitably chosen configuration
of increments outside the exceptional set. This choice is
problem-specific. For random walks, we take generic unit vectors
$u_1,\dots,u_n\in\R^d$ and set
$
    x_k:=R^{k-1}u_k
$
for sufficiently large $R>0$. We then prove that
\[
    0\in\Conv\left(
        \sum_{i=1}^k
        \eps_i R^{\sigma(i)-1}u_{\sigma(i)}
        :1\leq k\leq n
    \right)
    \quad\Longleftrightarrow\quad
    0\in\Conv\left(
        \eps_i u_{\sigma(i)}
        :i\in\mathrm{Rec}(\sigma)
    \right),
\]
where
$$
    \mathrm{Rec}(\sigma)
    :=
    \left\{
        k\in\{1,\dots,n\}:
        \sigma(k)>\sigma(i)\text{ for every }i<k
    \right\}
$$
is the set of record positions of $\sigma$.
The number of records of a uniform random permutation has the same
distribution as the number $K_n$ of its cycles. In particular,
\[
    \pr(K_n=k)
    =
    \frac{1}{n!}\stirling{n}{k},
    \qquad 1\leq k\leq n,
\]
where $\stirling{n}{k}$ are the unsigned Stirling numbers of the first kind,
defined by
\[
    \sum_{k=0}^n\stirling{n}{k}t^k
    =
    t(t+1)\cdots(t+n-1).
\]
Conditional on a permutation with $k$ records, Wendel's formula gives
the probability that the convex hull of the corresponding signed record
vectors contains the origin. Averaging over the number of records yields
\[
    \pwalk{n}{d}
    =
    \mathbb E\left[\pwend{K_n}{d}\right]
    =
    \frac{1}{n!}
    \sum_{k=d+1}^n
    \stirling{n}{k}\,
    \pwend{k}{d},
\]
with the natural convention $\pwend{k}{d}:=0$ for $1\leq k\leq d$.
A straightforward coefficient calculation transforms this identity into
the explicit formula~\eqref{eq:def-p-walk}. The arguments developed below
also yield the recurrence
\begin{equation}
    \pwalk{n}{d}
    =
    \frac{2n-1}{2n}\pwalk{n-1}{d}
    +
    \frac{1}{2n}\pwalk{n-1}{d-1}
    \label{eq:p-walk-recurrence-intro}
\end{equation}
and the probabilistic representation
\begin{equation}
    \pwalk{n}{d}
    =
    \pr\left(\sum_{m=2}^n U_m\geq d\right),
    \label{eq:p-walk-bernoulli-intro}
\end{equation}
where $U_2,U_3,\dots$ are independent random variables satisfying
$
U_m\sim\operatorname{Bern}\left(\frac{1}{2m}\right)$.

\medskip
The wall-crossing method is flexible enough to apply to many other
distribution-free problems. In particular, we expect that it can be
extended to convex hulls of several random walks and bridges, which would
provide an alternative proof of a result from~\cite{KVZ17_Advances}, as
well as to expected face numbers and positive hulls of random walks.
Analogous results in which partial sums are replaced by successive
differences were obtained
in~\cite{godland_kabluchko_positive_hulls_rand_walks,
godland_kabluchko_conical_tess_weyl_chambers}. To avoid excessive
technicalities, we illustrate the method through several representative
examples rather than pursuing its most general formulations.

 \subsection{Organization of the paper}
In Section~\ref{sec:Sparre-Andersen}, we illustrate the wall-crossing
method by reproving the classical formula of Sparre Andersen.
Section~\ref{sec:wendel} applies the method to Wendel's theorem, while
Section~\ref{sec:convex_hulls_random_walks} treats random-walk convex hulls
and proves Theorem~\ref{thm:1}. In Section~\ref{sec:bridge}, we adapt the
method to random bridges. Section~\ref{sec:insertion} gives a unified geometric
derivation of recurrence relations for absorption probabilities by an insertion argument and explains their probabilistic interpretation.
Section~\ref{sec:absorption_properties} derives  probabilistic
representations and recurrences directly  from the explicit formulas.

Appendix~\ref{convex_geometry} collects standard facts from convex
geometry that are used throughout the paper. Appendix~\ref{sec:paths_existence} contains the auxiliary
results on the existence of paths avoiding sets of codimension at least
$2$.

\section{Sparre Andersen's formula}
\label{sec:Sparre-Andersen}

In this section we illustrate the method in its simplest, one-dimensional
case, reproving Sparre Andersen's formula~\eqref{eq:sparreandersen}.

We begin by stating a combinatorial version of Sparre Andersen's formula. Fix an integer $n\geq1$. For a vector $x= (x_1,\dots, x_n)\in \R^n$ we denote its partial sums by
$$
s_k(x) := x_1 + x_2 + \dots + x_k,
\qquad 1 \le k \le n.
$$
Consider the closed polyhedral cone
\begin{align*}
C_{\mathrm{SA}} := \{x \in \R^n \mid s_1(x) \ge 0,
        s_2(x) \ge 0,\ \dots,\ s_n(x) \ge 0\}.
\end{align*}

Let $\mathfrak{B}_n$ be the group of signed permutations of the set $[n]:= \{1,\dots,n\}$ acting on $\R^n$ by permuting the coordinates of a vector and  multiplying
an arbitrary subset of the coordinates by $-1$. It is the symmetry group of the cube $[-1,1]^n$ and has $2^n n!$ elements. We parametrize each  $g \in \mathfrak{B}_n$ by
a permutation $\sigma$ of $\{1,\dots, n\}$ and a vector of signs $\varepsilon \in \{\pm 1\}^n$,
so that $g:\R^n \to \R^n$ acts by the orthogonal transformation
$$
gx = \big(\varepsilon_1 x_{\sigma(1)}, \dots,
\varepsilon_n x_{\sigma(n)}\big), \qquad x\in \R^n.
$$
In this section, we identify the elements of $\mathfrak{B}_n$ with the corresponding orthogonal transformations of $\R^n$.

For a point $x\in \R^n$ let $\mathcal N(x)$ denote the set of all signed permutations that map $x$ to the cone $C_{\mathrm{SA}}$, that is
\begin{align*}
    \mathcal N (x) := \{g \in \mathfrak{B}_n \mid gx \in C_{\mathrm{SA}}\}.
\end{align*}
Sparre Andersen's formula states that the size of the set $\mathcal N(x)$ is the same for all $x$ outside some ``exceptional set'' which we are now going to describe.  For $g \in \mathfrak{B}_n$ and
$m \in \{1, \dots, n\}$ consider the hyperplane
\begin{align*}
    L_{g,m} := \{x \in \R^n \mid s_{m}(gx) = 0\}.
\end{align*}
The hyperplanes $L_{g,m}$ are in general not distinct. Let $\mathcal{L}$ be the finite set of all such hyperplanes listed without repetitions. A point $x \in \R^n$ is \emph{regular} if
it lies on no hyperplane of $\mathcal{L}$, i.e.\ if $x\in \R^n\setminus \bigcup \mathcal L$. Equivalently, a point $x\in \R^n$ is regular if $\sum_{i\in I} \eps_i x_{i} \neq 0$ for every nonempty subset $I\subseteq [n]$ and for every vector of signs $(\eps_i)_{i\in I}\in \{\pm 1\}^I$.

\begin{theorem}[Sparre Andersen's formula: Combinatorial version]\label{theo:1d_sparre_andersen}
    For every regular point
    $x \in \R^n$, the cardinality of the set  $\mathcal{N}(x)$ is given by
    $$
    |\mathcal{N}(x)| = (2n-1)!! = \frac{(2n)!}{2^{n}n!}.
    $$
    In particular, this cardinality is the same for all regular points $x$.
\end{theorem}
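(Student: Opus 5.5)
The plan follows the wall-crossing scheme from the introduction, in two parts: first show that $|\mathcal N(x)|$ is constant on the regular set, then evaluate it at one convenient regular point.

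\emph{Step 1: local constancy.} For regular $x$ and any $g\in\mathfrak B_n$, all partial sums $s_m(gx)$ are nonzero. Hence the condition $gx\in C_{\mathrm{SA}}$ is equivalent to $s_m(gx)>0$ for all $m$, which is an open condition. The set $\mathcal N(x)$ is therefore constant on each connected component of $\R^n\setminus\bigcup\mathcal L$.

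\emph{Step 2: crossing a wall.} Two regular points can be joined by a path that crosses the walls only at points $x_0$ lying on exactly one hyperplane $H\in\mathcal L$, transversally. The set of points on two or more distinct hyperplanes has codimension $2$, so a generic segment (or polygonal path) avoids it; this is the kind of statement collected in Appendix~\ref{sec:paths_existence}.

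Write $H=\{\sum_{i\in I}\eta_i x_i=0\}$ for some nonempty $I\subseteq[n]$ and signs $\eta$, unique up to a global sign. Consider the $g=(\sigma,\eps)$ for which $gx_0\in C_{\mathrm{SA}}$ and some partial sum $s_m(gx_0)$ vanishes. Since $x_0$ lies on $H$ only, this happens only for the single $m=|I|$ with $\{\sigma(1),\dots,\sigma(m)\}=I$ and $\eps_i=\pm\eta_{\sigma(i)}$ consistently. All other partial sums are nonzero, hence strictly positive.

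For $x$ near $x_0$, membership $g\in\mathcal N(x)$ is therefore decided by the sign of $s_m(gx)=\pm\sum_{i\in I}\eta_ix_i$. I pair such $g$ with $g'$ obtained by flipping the signs $\eps_1,\dots,\eps_m$. I must then check that $g'$ also satisfies $g'x_0\in C_{\mathrm{SA}}$, and this is the main obstacle: flipping the first $m$ signs negates $s_1,\dots,s_m$, which would violate positivity for $k<m$.

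A better involution is \emph{reversal of the first block}: define $g'$ by $\sigma'(i)=\sigma(m+1-i)$ and $\eps'_i=-\eps_{m+1-i}$ for $i\le m$, leaving the rest unchanged. Then
\[
s_k(g'x)=-\bigl(s_m(gx)-s_{m-k}(gx)\bigr),
\]
so at $x_0$ (where $s_m(gx_0)=0$) we get $s_k(g'x_0)=s_{m-k}(gx_0)>0$ for $k<m$. Also $s_m(g'x)=-s_m(gx)$, and the later partial sums agree at $x_0$. Thus $g\mapsto g'$ is an involution on the boundary set at $x_0$. For $x$ off the wall, exactly one of $g,g'$ lies in $\mathcal N(x)$. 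It has no fixed points, because $s_m$ changes sign under it. Consequently the count $|\mathcal N(x)|$ is equal on both sides of $H$, and combined with Step 1, $|\mathcal N(x)|$ is the same for all regular $x$.

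\emph{Step 3: evaluation.} I take $x=(1,R,R^2,\dots,R^{n-1})$ with $R>2$; this point is regular by uniqueness of base-$R$ signed expansions. For such $x$, the sign of each partial sum $s_k(gx)$ is the sign attached to the largest coordinate among the first $k$ entries. Hence $g\in\mathcal N(x)$ if and only if $\eps_k=+1$ at every record position $k$ of $\sigma$ (as in the definition of $\mathrm{Rec}(\sigma)$ given earlier), while the signs at non-record positions are free. This gives
\[
|\mathcal N(x)|=\sum_{\sigma}2^{\,n-|\mathrm{Rec}(\sigma)|}=2^n\sum_k\stirling{n}{k}2^{-k}=2^n\cdot\tfrac12\cdot\tfrac32\cdots\tfrac{2n-1}{2}=(2n-1)!!,
\]
using the generating function $t(t+1)\cdots(t+n-1)$ at $t=1/2$ and the fact that the number of records is distributed like the number of cycles.
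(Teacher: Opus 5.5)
Your proposal is correct and follows the paper's proof: the same constancy-plus-evaluation scheme, the same involution $\rho_m$ that reverses and negates the first block, and the same dominated test point with records counted via the Stirling generating function at $t=1/2$. One small correction: $(1,R,\dots,R^{n-1})$ is regular because of domination $R^k>\sum_{j<k}R^j$ (already true for $R\ge 2$), not because of uniqueness of signed base-$R$ expansions, which can fail for non-integer $R>2$ (e.g.\ $R=1+\sqrt3$ gives $R^2-R-1=R+1$).
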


\begin{proof}
In the first part of the proof we show that $|\mathcal N(x)| = |\mathcal N(y)|$ for all regular points $x,y \in \R^n$. To this end, we connect $x$ and $y$ by a suitable path and show that $|\mathcal N(x)|$ is constant along this path, excluding finitely many exceptional times. This shows that $|\mathcal N(x)|$ is constant on the set of regular points. In the second part of the proof, we identify the value of the constant by evaluating $|\mathcal N(x)|$ at a suitably chosen regular point $x$.

\medskip
\emph{Constancy along a path.} Let $x, y\in \R^n$ be any two regular points.
There is a piecewise linear path
$\gamma : [-1,1] \to \R^n$ such that
\begin{itemize}
\item $\gamma$ connects $x$ to $y$, that is $\gamma(-1) =x$ and $\gamma(1) = y$;
\item $\gamma(t)\in\bigcup\mathcal L$ only at finitely many exceptional
times $t_1,\dots,t_L$ with  $-1 < t_1 < \dots < t_L< 1$;
\item  $\gamma$ avoids double degeneracies, i.e.\ for every $i=1,\dots, L$, the point  $\gamma(t_i)$ belongs to \emph{exactly one} hyperplane in $\mathcal{L}$.
\end{itemize}

Heuristically, this follows because the linear subspaces $H'\cap H''$ with $H',H''\in \mathcal L$, $H'\neq H''$, have codimension at least $2$ and can therefore be avoided by a small perturbation of the path. A complete proof is given  in Lemma \ref{lem:path_for_sparre_andersen}.

Recall that $\mathcal{N}(\gamma(t)) = \{g \in \mathfrak{B}_n \mid g(\gamma(t)) \in C_{\mathrm{SA}}\}$. We study the behavior of the function   $t\mapsto |\mathcal{N}(\gamma(t))|$ when $t$ changes from $-1$ to $1$. Fix some $g\in \mathfrak{B}_n$. Let first $L\geq 1$. On every interval $[-1, t_1), (t_1,t_2), \dots, (t_L, 1]$, we have $g(\gamma(t))\notin \bigcup \mathcal L$ by the properties of $\gamma$ and by the $\mathfrak{B}_n$-invariance of $\bigcup \mathcal L$. By the definition of $\mathcal L$, none of the functions
$t\mapsto s_m(g(\gamma(t)))$, $1\leq m\leq n$, vanishes on any of
these intervals. Hence each of these functions has a constant sign
there.  So $|\mathcal{N}(\gamma(t))|$ is constant on the intervals $[-1, t_1), (t_1,t_2), \dots, (t_L, 1]$. (We assumed $L\geq 1$; for $L=0$ the same argument shows that $|\mathcal{N}(\gamma(t))|$ is constant on $[-1,1]$, and the conclusion follows.)
It remains to verify that $|\mathcal{N}(\gamma(t))|$  does not change when $t$ crosses an exceptional time $t_\ell$ (although the value exactly at $t_\ell$ may be different).

Fix an exceptional time $t_\ell$ with $\ell\in \{1,\dots, L\}$. After reparametrization of the path $\gamma$ we may assume that $t_\ell= 0$ --- this is done just for convenience of notation.  Our aim is to show that for all sufficiently small $t\neq 0$,  the function $t\mapsto |\mathcal N (\gamma(t))|$ stays constant.

First, let $g\in \mathfrak{B}_n$ be such that $g(\gamma(0)) \notin C_{\mathrm{SA}}$. Since $C_{\mathrm{SA}}$ is closed and $\gamma$ is continuous, we have $g(\gamma(t)) \notin C_{\mathrm{SA}}$ for all sufficiently small $t$. It follows that such elements $g$ contribute $0$ to $|\mathcal{N}(\gamma(t))|$.  Next, let  $g\in \mathfrak{B}_n$ be such that $g(\gamma(0))$ belongs to the interior of $C_{\mathrm{SA}}$. Then, again by continuity, $g(\gamma(t))\in C_{\mathrm{SA}}$ for all sufficiently small $t$.  Such elements $g$ contribute $1$ to $|\mathcal{N}(\gamma(t))|$. Let $\partial C_{\mathrm{SA}}$ be the boundary of $C_{\mathrm{SA}}$ and write
$$
|\mathcal N(\gamma(t))| = \sum_{g\in \mathfrak{B}_n: g(\gamma(0))\notin \partial C_{\mathrm{SA}}} \mathbb{1}_{\{g(\gamma(t)) \in C_{\mathrm{SA}}\}} + \sum_{g\in \mathfrak{B}_n: g(\gamma(0))\in \partial C_{\mathrm{SA}}} \mathbb{1}_{\{g(\gamma(t)) \in C_{\mathrm{SA}}\}}.
$$
We have shown that the first sum on the right-hand side stays constant provided $t$ is sufficiently small.

It remains to consider  $g\in \mathfrak{B}_n$ for which $g(\gamma(0))$ belongs to $\partial C_{\mathrm{SA}}$.    This means that $s_{k}(g(\gamma(0)))\geq 0$ for all  $k\in \{1,\dots, n\}$ and $s_{m}(g(\gamma(0))) = 0$ for at least one $m\in \{1,\dots, n\}$. Since $\gamma(0)$ belongs  to  a \emph{unique} hyperplane of $\mathcal{L}$, the index $m$ with this property is unique and we write $m= m(g)$. So, let
\begin{multline*}
\mathcal N_{\partial}  = \{g\in \mathfrak{B}_n:
\exists m = m(g)\in \{1,\dots, n\}:
\\
s_{m}(g(\gamma(0))) = 0 \text{ and }
s_{k}(g(\gamma(0))) > 0 \text{ for } k \neq m\}.
\end{multline*}

To complete the proof,  it suffices to show that for all $t\neq 0$ sufficiently close to $0$, one has
\begin{equation}\label{eq:sparre_proof_sum_over_M_ell_half}
\sum_{g\in \mathcal N_\partial} \mathbb{1}_{\{g(\gamma(t)) \in C_{\mathrm{SA}}\}} = \frac 12 |\mathcal N_\partial|.
\end{equation}

To prove~\eqref{eq:sparre_proof_sum_over_M_ell_half}, we shall construct a bijection $\iota: \mathcal N_\partial \to \mathcal N_\partial$ such that  $\mathbb{1}_{\{g(\gamma(t)) \in C_{\mathrm{SA}}\}}
    + \mathbb{1}_{\{\iota(g)\gamma(t) \in C_{\mathrm{SA}}\}} = 1$ for all $g\in \mathcal N_\partial$ and all  sufficiently small $t\neq 0$.
For each $m\in\{1,\dots,n\}$,  let $\rho_m \in \mathfrak{B}_n$ be a signed permutation that reverses
the order and flips the signs of the first $m$ coordinates, that is $\rho_m:\R^n \to\R^n$ is a linear map given by
$$
\rho_m (x_{1},\dots, x_{n}) =
(-x_{m}, -x_{m-1},\dots, -x_{2}, -x_{1}, x_{m+1},x_{m+2},\dots, x_{n}).
$$
It follows from this definition that
\begin{equation}\label{eq:sparre_proof_rho_m_changes_sign_m_partial_sum}
s_{m}(\rho_m x) = -\,s_{m}(x).
\end{equation}
Also,  $\rho_m \rho_m = \mathrm{id}$, i.e.\ $\rho_m$ is an involution.
A telescoping computation gives, for all $x\in \R^n$,
\begin{equation} \label{eq:partial_sum_after_rho}
    s_{j}(\rho_m x)
    =
    \begin{cases}
    s_{m-j}(x) - s_{m}(x), &j=1,\dots, m,
    \\
     s_{j}(x) - 2\, s_{m}(x), &j =  m,\dots, n,
     \end{cases}
\end{equation}
where we set  $s_{0}(x) := 0$. Note that the two cases agree at $j=m$.
For $g\in \mathcal N_\partial$ define $\iota(g) := \rho_{m(g)}\, g$. It follows from~\eqref{eq:sparre_proof_rho_m_changes_sign_m_partial_sum} and \eqref{eq:partial_sum_after_rho} that the $m$-th partial sum of $\iota(g)\gamma(0)$ vanishes, while all other partial sums are positive. So, $\iota$ maps $\mathcal N_\partial$ to itself and, moreover, $m(\iota(g)) = m(g)$. The latter property, together with $\rho_m \rho_m =\mathrm{id}$, implies that $\iota \iota = \mathrm{id}$, i.e.\ $\iota: \mathcal N_\partial \to \mathcal N_\partial$ is an involution and, consequently, a bijection.

For $g\in \mathcal N_\partial$ and $m=m(g)$, the definition of $\mathcal N_\partial$ implies that the $m$-th  partial sum of $g(\gamma(0))$ vanishes, while all other partial sums are strictly positive. The same claim applies to the partial sums of $\iota(g)(\gamma(0))$ since $\iota(g)\in \mathcal N_\partial$ and $m(g) = m(\iota(g))$. Moreover, it follows from~\eqref{eq:sparre_proof_rho_m_changes_sign_m_partial_sum} that  for every $t$ we have
$$
s_{m}(g(\gamma(t))) +  s_{m}(\iota(g)(\gamma(t))) = 0.
$$
For sufficiently small $t\neq 0$, exactly one of these numbers is strictly positive, whereas the other is strictly negative. Moreover, by continuity, the $k$-th partial sums of $g(\gamma(t))$ and $\iota(g)(\gamma(t))$ remain positive for all $k\neq m$ and all sufficiently small $t$.  It follows that for sufficiently small $t\neq 0$,  exactly one of the points $g(\gamma(t))$ and $\iota(g)(\gamma(t))$ belongs to the cone $C_{\mathrm{SA}}$, that is
\begin{align*}
    \mathbb{1}_{\{g\gamma(t) \in C_{\mathrm{SA}}\}}
    + \mathbb{1}_{\{\iota(g)\gamma(t) \in C_{\mathrm{SA}}\}} = 1.
\end{align*}
Summing over $g \in \mathcal{N}_\partial$, dividing by $2$ and using the fact that $\iota: \mathcal N_\partial\to \mathcal N_\partial$ is a bijection, gives~\eqref{eq:sparre_proof_sum_over_M_ell_half}.  Consequently, the contribution of the elements of $\mathcal N_\partial$ to
$|\mathcal N(\gamma(t))|$ is the same on both sides of the exceptional
time $t=0$. Together with the constancy of the first sum in the above
decomposition, this shows that $|\mathcal N(\gamma(t))|$ has the same
value for all sufficiently small positive and negative $t$. Repeating
this argument at every exceptional time $t_1,\dots,t_L$, and using the
constancy between consecutive exceptional times, we conclude that
$|\mathcal N(x)|=|\mathcal N(y)|$.

\medskip
\textit{Computation of the constant.}
Since $|\mathcal N(x)|$ takes the same value --- we denote it by $\kappa_n$ --- at every regular
vector $x\in\R^n$, we may compute it on a single well-chosen regular vector $x$. We use the tuple
\begin{align*}
    \lambda = (1,2,4,\dots,2^{n-1}),
\end{align*}
whose key property is \emph{domination}: each entry exceeds the sum of all
smaller ones, because $2^k>\sum_{j=0}^{k-1}2^j=2^k-1$ for all $k\geq 1$.

An element $g\in \mathfrak{B}_n$ arranges the entries of $\lambda$ in some order and
attaches a sign to each, producing a tuple
\begin{align*}
    g \lambda = (\varepsilon_1 2^{\sigma(1)-1}, \dots, \varepsilon_n 2^{\sigma(n)-1}),
    \qquad \varepsilon_i\in\{\pm1\},
\end{align*}
where $(\sigma(1),\dots,\sigma(n))$ is a permutation of $(1,\dots,n)$.
Domination has two immediate consequences. First, no signed partial sum
$s_{m}(g\lambda) = \sum_{i\le m}\varepsilon_i 2^{\sigma(i)-1}$ vanishes, since the largest power of $2$ occurring has magnitude greater than the sum of all smaller powers.   So $\lambda$ is regular.
Second, we obtain the following sign rule:
\begin{center}
\emph{The sign of $s_{m}(g\lambda)$ equals the sign attached to the largest
entry among $2^{\sigma(1)-1},\dots,2^{\sigma(m)-1}$,}
\end{center}
since that entry outweighs all the others combined. Thus $\kappa_n$ counts
the signed permutations $g\lambda$ in which, for every $m$, the largest of the first
$m$ entries carries the sign $+$. Recall that a position $i$ is a record of $\sigma$ if
$\sigma(i)>\sigma(j)$ for every $j<i$. So $\kappa_n$ counts
the signed permutations $g\lambda$ in which all record positions carry the sign $+$.

Now there are several  ways to complete the argument.  The number of permutations of
$[n]$ with exactly $k$ records is the unsigned Stirling number of the
first kind $\stirling{n}{k}$, characterized by
$
\sum_{k=0}^n \stirling{n}{k}t^k
=t(t+1)\cdots(t+n-1)$.
If each record has to carry the sign $+$, there are $2^{n-k}$ ways of attaching signs to non-records. So
$$
\kappa_n = \sum_{k=1}^n 2^{n-k} \stirling{n}{k} = 2^n \prod_{j=0}^{n-1} \left(\frac 12 + j\right) = (2n-1)!!,
$$
where we used the generating function for Stirling numbers with $t=1/2$.

Alternatively, we can prove the recurrence relation $\kappa_n = (2n-1)\kappa_{n-1}$ for all $n\geq 2$ by arguing as follows. Delete the entry $1$ (together with its sign) from the signed permutation $g\lambda$.
Since $1$ is the largest among the first $m$ entries only when it stands
first and $m=1$, the deletion affects the sign rule in no other partial
sum: what remains is a signed arrangement of $(2,4,\dots,2^{n-1})$ --- again
a dominated tuple, of length $n-1$ --- satisfying the same positivity
condition. After division by $2$, this is precisely an admissible signed
arrangement of $(1,2,\dots,2^{n-2})$ counted by $\kappa_{n-1}$. Conversely, every admissible arrangement of length $n$ is
obtained from an admissible one of length $n-1$ by inserting $\pm1$
somewhere, and we count the ways to do so:
\begin{itemize}
    \item inserted at one of the $n-1$ positions after the first, the entry
    $1$ is never the running maximum, so both signs are allowed:
    $2(n-1)$ ways;
    \item inserted at the first position, it must satisfy
    $s_1=\varepsilon>0$, so only the sign $+$ is allowed: $1$ way.
\end{itemize}
Hence
\begin{align*}
    \kappa_n = \bigl(2(n-1)+1\bigr)\,\kappa_{n-1}
             = (2n-1)\,\kappa_{n-1}, \qquad n\geq 2.
\end{align*}
Together with the initial condition $\kappa_1 = 1$ this gives $\kappa_n = (2n-1)!! = \frac{(2n)!}{2^n\,n!}$.
\end{proof}

The probabilistic version of Sparre Andersen's formula can now be derived in a standard way.
\begin{theorem}[Sparre Andersen's formula: Probabilistic version]
Let $n\geq1$. Let $\xi_1,\dots,\xi_n$ be real-valued,
possibly dependent random variables with partial sums $S_k:= \xi_1+\dots+\xi_k$, $1\leq k\leq n$, and suppose that the following conditions are satisfied:
\begin{itemize}
\item [(i)]  \emph{Symmetric exchangeability:} For every permutation $\sigma$ of the set $\{1,\dots,n\}$ and every choice of signs $\eps_1,\dots,\eps_n\in \{-1,+1\}$, the following equality in distribution holds:
    $$
    (\xi_1,\dots,\xi_n) \stackrel{d}{=}(\eps_1 \xi_{\sigma(1)}, \dots, \eps_n \xi_{\sigma(n)}).
    $$
\item[(ii)] \emph{No partial-sum atoms at $0$:}
For every $1\leq i \leq n$, we have $\pr[S_i = 0] = 0$.
\end{itemize}
Then
    \begin{align*}
        \pr(S_1 > 0, S_2 > 0, \dots, S_n > 0) = \frac{1}{2^{2n}}
        \binom{2n}{n}.
    \end{align*}
\end{theorem}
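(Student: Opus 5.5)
The plan is to reduce the probabilistic statement to Theorem~\ref{theo:1d_sparre_andersen} by averaging over the group $\mathfrak B_n$. Put $\xi=(\xi_1,\dots,\xi_n)$, a random vector in $\R^n$. Writing $g\xi=(\eps_1\xi_{\sigma(1)},\dots,\eps_n\xi_{\sigma(n)})$ for $g\in\mathfrak B_n$, assumption (i) says exactly that $g\xi\stackrel{d}{=}\xi$ for every $g\in\mathfrak B_n$. The event $\{S_1>0,\dots,S_n>0\}$ is the event that $\xi$ lies in the interior of $C_{\mathrm{SA}}$, i.e.\ $s_k(\xi)>0$ for all $k$. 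For each $g$ we therefore have
\[
\pr(S_1>0,\dots,S_n>0)=\pr\bigl(s_k(g\xi)>0\ \text{for all } k\bigr).
\]
Averaging this over all $2^nn!$ elements $g$ gives
\[
\pr(S_1>0,\dots,S_n>0)=\frac{1}{2^nn!}\,\mathbb E\Bigl[\#\{g\in\mathfrak B_n: s_k(g\xi)>0\ \forall k\}\Bigr].
\]

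The next step is to show that $\xi$ is almost surely a regular point. Regularity means $\sum_{i\in I}\eps_i\xi_i\neq0$ for every nonempty $I\subseteq[n]$ and every choice of signs $(\eps_i)_{i\in I}$. Fix such $I$ and signs. Choose a signed permutation $g$ that moves the indices of $I$ to the first $|I|$ positions and carries the prescribed signs. Then $\sum_{i\in I}\eps_i\xi_i=s_{|I|}(g\xi)$. Since $g\xi\stackrel{d}{=}\xi$, this sum has the same distribution as $S_{|I|}$, so by (ii) it vanishes with probability $0$. There are finitely many pairs $(I,\eps)$, so a union bound shows that $\xi$ is regular almost surely.

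On the set where $\xi$ is regular, no partial sum $s_k(g\xi)$ vanishes. Hence $g\xi\in C_{\mathrm{SA}}$ is equivalent to all $s_k(g\xi)$ being strictly positive, and the count inside the expectation equals $|\mathcal N(\xi)|$. By Theorem~\ref{theo:1d_sparre_andersen}, $|\mathcal N(\xi)|=(2n-1)!!$ almost surely. Substituting this constant gives
\[
\pr(S_1>0,\dots,S_n>0)=\frac{(2n-1)!!}{2^nn!}=\frac{(2n)!}{2^{2n}(n!)^2}=\frac{1}{2^{2n}}\binom{2n}{n}.
\]

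The only step needing care is the regularity claim. Assumption (ii) controls only the ordinary partial sums $S_i$, whereas regularity involves arbitrary signed subset sums. The symmetric exchangeability in (i) is what bridges this gap: every signed subset sum of $\xi$ is the partial sum of some $g\xi$. Beyond that, measurability of the events involved is routine, since $\mathfrak B_n$ is finite and every event is defined by finitely many linear inequalities.
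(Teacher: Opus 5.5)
Your proposal is correct and follows the paper's proof essentially verbatim. It averages over $\mathfrak B_n$ using invariance of the law of $\xi$, shows that $\xi$ is almost surely regular because every signed subset sum equals some partial sum $s_m(g\xi)$ and is therefore distributed as $S_m$, and then invokes Theorem~\ref{theo:1d_sparre_andersen}. The only cosmetic difference is that you work with strict inequalities from the start, whereas the paper computes $\pr(S_1\ge0,\dots,S_n\ge0)$ via the closed cone $C_{\mathrm{SA}}$ and passes to strict inequalities at the end using $\pr(S_k=0)=0$.
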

\begin{proof}
Let $\mu$ be the joint law of $(\xi_1,\dots,\xi_n)$ on $\R^n$. Then
$\mu$ is $\mathfrak{B}_n$-invariant. Hence, for every
$g\in\mathfrak{B}_n$ and $m\in\{1,\dots,n\}$,
\[
\mu(L_{g,m})
=
\pr\bigl(s_m(g(\xi_1,\dots,\xi_n))=0\bigr)
=
\pr(S_m=0)
=
0.
\]
Since $\mathcal L$ is finite, it follows that
$\mu(\bigcup\mathcal L)=0$. Writing
$\kappa_n=\frac{(2n)!}{2^n n!}$ for the common value of
$|\mathcal N(x)|$ at regular points $x\in\R^n$, we have
\begin{align*}
    \pr(S_1 \ge 0, \dots, S_n \ge 0)
    = \int\limits_{\R^n} \mathbb{1}_{\{x \in C_{\mathrm{SA}}\}}\,\mu(dx)
    = \frac{1}{2^n n!}
       \int\limits_{\R^n} |\mathcal N(x)|\,\mu(dx)
    = \frac{\kappa_n}{2^n n!} = \frac{1}{2^{2n}}\binom{2n}{n}.
\end{align*}
Since each event $\{S_k=0\}$ has probability zero, the same value is
obtained with strict inequalities.
\end{proof}

\section{Wendel's theorem}\label{sec:wendel}

In this section, we give a second illustration of the wall-crossing  method, proving Wendel's formula stated in Theorem~\ref{thm:wendel}.  We begin by introducing notation needed to state its combinatorial version.

Let $n, d \ge 1$ be integers with $n \geq d+1$. Let $x_1,\dots, x_n$ be vectors in $\R^d$ and write $X = (x_1 \,|\, \dots \,|\, x_n) \in \R^{d\times n}$ for the matrix whose columns are these vectors. Consider the polytope
$$
H(X) := \Conv\big(x_1, \dots,x_n\big).
$$
The relevant group here is $(\mathbb Z / 2\mathbb Z)^n$, acting on $\R^{d\times n}$ by flipping the signs of the columns. An element of this group, represented by  $\eps = (\eps_1,\dots, \eps_n) \in \{\pm1\}^n$, maps $X\in \R^{d\times n}$ to
$$
\eps X := (\eps_1 x_1\,|\,  \dots \, | \,  \eps_n x_n)\in \R^{d\times n}.
$$
To every $X\in \R^{d\times n}$ we associate $2^n$ polytopes $H(\eps X) = \Conv(\eps_1 x_1, \dots, \eps_n x_n)$,  $\eps\in \{\pm 1\}^n$, and let
\begin{align*}
    \mathcal M(X) := \big\{ \eps \in \{\pm1\}^n : 0 \notin H(\eps X) \big\}.
\end{align*}
Wendel's formula states that the size of the set $\mathcal M(X)$ is the same for all $X$ outside the following exceptional set. For a $d$-element set $I = \{i_1 < \dots < i_d\} \subseteq [n]$ define the \emph{wall}
$$
    D_{I} := \{ X \in \R^{d\times n} \mid \det\big(x_{i_1}\,|\, \dots\,|\, x_{i_d}\big) = 0 \}.
$$
The exceptional set $\mathcal D$ is then defined as the union of all walls,
$$
\mathcal D := \bigcup_{|I| = d} D_{I}.
$$
We call a configuration $X\in \R^{d\times n}$ \emph{regular} if $X\notin \mathcal D$. Equivalently, $X$ is regular if any $d$ of its columns are linearly independent, that is, if the columns of $X$ are in general linear position.

\begin{theorem}[Wendel's formula: Combinatorial version] \label{thm:wendel_comb}
Let $n, d \ge 1$ be integers with $n \geq d+1$.
For every regular $X \in \R^{d\times n}$, the cardinality of the set $\mathcal M(X)$ is given by
\begin{align*}
    |\mathcal M(X)| = 2 \sum_{j=0}^{d-1} \binom{n-1}{j}.
\end{align*}
In particular, this cardinality is the same for all regular $X$.
\end{theorem}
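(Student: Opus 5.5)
We follow the two-part scheme used for Theorem~\ref{theo:1d_sparre_andersen}: first show that $|\mathcal M(X)|$ is constant on the set of regular configurations, then evaluate it at one convenient regular $X$.

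\emph{Constancy.} We join two regular configurations $X,Y$ by a piecewise linear path $\gamma$ in $\R^{d\times n}$. We choose the path so that it meets $\mathcal D$ only at finitely many times $t_1<\dots<t_L$. At each such time $\gamma(t_\ell)$ should lie on exactly one wall $D_I$. In addition, the $d$ columns indexed by $I$ should span a hyperplane of dimension exactly $d-1$, and no further column should lie in that hyperplane. All other degeneracies form semialgebraic sets of codimension at least $2$, so a path avoiding them exists by the appendix results, in the same way as in Lemma~\ref{lem:path_for_sparre_andersen}.

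Between exceptional times, $0$ never lies on the boundary of any $H(\eps\gamma(t))$. Indeed, by Carath\'eodory's theorem (Appendix~\ref{convex_geometry}), $0\in\partial H$ forces $0$ into the convex hull of at most $d$ of the vectors $\pm x_i$, and hence forces some $d$ columns to be linearly dependent. So $|\mathcal M(\gamma(t))|$ is locally constant there.

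At a crossing time $t_\ell=0$, let $X_0=\gamma(0)$ and $I$ the unique wall. Signs $\eps$ for which $0\notin H(\eps X_0)$, or $0\in\Int H(\eps X_0)$, contribute the same amount on both sides of $0$ by continuity. The remaining $\eps$ are those with $0\in\partial H(\eps X_0)$. For these, $0$ lies in the relative interior of a simplex $\Conv(\eps_i x_i : i\in J)$, where $J\subseteq I$ is the minimal dependent set. By genericity, $J=I$ with the unique (up to scaling) linear relation $\sum_{i\in I}c_ix_i=0$, and all $c_i\ne0$. Hence $\eps_i\sgn c_i$ is constant on $I$. Moreover, the remaining points $\eps_jx_j$, $j\notin I$, all lie strictly on one side of the hyperplane $W=\mathrm{span}(x_i:i\in I)$, since otherwise $0$ would be interior.

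\emph{The involution.} Define $\iota(\eps)$ by flipping the signs $\eps_i$ for all $i\in I$ simultaneously. This preserves the boundary condition on $I$, because it negates the relation. It does not touch the $j\notin I$, so $\iota$ is an involution on the boundary set.

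We must check that, for small $t\ne0$, exactly one of $\eps$ and $\iota(\eps)$ has $0\in H$. To see this, use a linear functional $\ell$ vanishing on $W(0)$ and positive on the outside points $\eps_jx_j$. After perturbation, the simplex $\Conv(\eps_ix_i(t):i\in I)$ crosses $0$ transversally, and $0\in H$ iff this simplex passes on the appropriate side of $0$. Concretely, the segment from $0$ in direction opposite to the outside points must hit the simplex. Flipping all signs on $I$ reflects the simplex through $0$, so exactly one of the two configurations has this property.

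I expect this transversality bookkeeping to be the main obstacle. I would handle it with a Radon/Gale-type argument: write $0=\sum_{i\in I}\lambda_i(t)\eps_ix_i(t)+\mu(t)v$, where $v$ is a fixed outside point. The unique such solution has $\lambda_i(t)>0$ and $\mu(t)$ changing sign with $t$, and the flip $\iota$ changes $\mu\mapsto-\mu$. This yields the equal-cardinality splitting as in~\eqref{eq:sparre_proof_sum_over_M_ell_half}.

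\emph{Computation.} Take $x_1,\dots,x_d$ the standard basis and $x_{d+1},\dots,x_n$ generic. A cleaner route, though, is induction via a limiting configuration in which $x_n\to -x_{n-1}$. Instead, I would pick $x_i=(1,t_i,\dots,t_i^{d-1})$ on the moment curve and count directly. The condition $0\notin H(\eps X)$ means there is a linear functional $a$ with $\eps_i\langle a,x_i\rangle>0$ for all $i$. For the moment curve, $\langle a,x_i\rangle=p(t_i)$ with $\deg p\le d-1$. So the achievable sign vectors are exactly those with at most $d-1$ sign changes along $t_1<\dots<t_n$.

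Counting such sign vectors gives $2\sum_{j=0}^{d-1}\binom{n-1}{j}$: choose the first sign, then choose the change positions among the $n-1$ gaps. The achievability of every such pattern follows by taking $p$ with roots placed in the chosen gaps, and the bound of at most $d-1$ changes holds because $p$ has at most $d-1$ roots.
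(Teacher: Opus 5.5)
Your proposal is correct and follows the paper's proof essentially step by step. The shared steps are: a path that meets the walls finitely often, one wall at a time; the involution that flips all signs on the unique degenerate $d$-set $I$, under which the coordinate of the origin transverse to the simplex changes sign; and evaluation on the moment curve by counting sign patterns of polynomials of degree at most $d-1$. Two small remarks. First, you do not need $\mu(t)$ to change sign at $t=0$, and for a general path it need not. What you need is only $\mu(t)\neq 0$ for small $t\neq 0$, which follows from the regularity of $\gamma(t)$. Second, your extra genericity conditions at a crossing, namely that the $I$-columns span exactly a hyperplane and that no other column lies in it, already follow from $\gamma(0)$ lying on exactly one wall when $n\geq d+1$.
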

\begin{remark}\label{rem:wendel_for_n_leq_d}
The conclusion remains valid for $1\leq n\leq d$ if
$x_1,\dots,x_n$ are in general linear position, which in this case
means that they are linearly independent. Indeed,
$0\notin H(\eps X)$ for every $\eps\in\{\pm1\}^n$, and hence
$
|\mathcal M(X)|=2^n
=2\sum_{j=0}^{d-1}\binom{n-1}{j}
$,
where $\binom{n-1}{j}=0$ for $j>n-1$.
\end{remark}

\begin{proof}[Proof of Theorem~\ref{thm:wendel_comb}]
In the first part of the proof we show that $|\mathcal M(X)| = |\mathcal M(Y)|$ for any
two regular $X, Y \in \R^{d\times n}$. In the second part we identify the value of the constant by evaluating $|\mathcal M(X)|$ at a suitably chosen regular configuration $X$.

\medskip
\emph{Constancy along a path.}
Let $X,Y\in \R^{d\times n}$ be any two regular configurations. By Proposition~\ref{prop:pathexist_for_wendel}, whose proof will be given in Appendix~\ref{sec:paths_existence}, there is a piecewise linear path
$\gamma : [-1,1] \to \R^{d\times n}$ such that
\begin{itemize}
\item $\gamma$ connects $X$ to $Y$, that is $\gamma(-1) =X$ and $\gamma(1) = Y$;
\item $\gamma(t) \in \mathcal D$ only at finitely many exceptional times $t_1,\dots, t_L$ with  $-1 < t_1 < \dots < t_L< 1$;
\item  $\gamma$ avoids double degeneracies, i.e.\ for every $\ell=1,\dots, L$, the point $\gamma(t_\ell)$ belongs to \emph{exactly one} wall.
\end{itemize}

We denote the columns of $\gamma(t)$ by $x_1(t), \dots, x_n(t)$ and study the behavior of the function $t \mapsto |\mathcal M(\gamma(t))|$ when $t$ changes from $-1$ to $1$.
We first record a continuity property:
\begin{center}
\emph{if $0 \notin \partial H(\eps Y)$ for some $Y \in \R^{d\times n}$ and $\eps \in \{\pm 1\}^n$,
\\
then the indicator function $Z\mapsto \mathbb{1}_{\{0 \notin H(\eps Z)\}}$ is constant in a neighbourhood of $Y$.}
\end{center}

This is a special case of Lemma~\ref{lem:stability} in the Appendix,
applied to the tuple $(\eps_1 y_1, \dots, \eps_n y_n)$, where
$y_1, \dots, y_n$ denote the columns of $Y$.

Now let $t$ vary. If $\gamma(t)$ is regular, then $0 \notin \partial H(\eps\gamma(t))$ for every $\eps\in \{\pm 1\}^n$: this follows from Lemma~\ref{lem:separation}(iii).  Together with the continuity property this shows that $|\mathcal M(\gamma(t))|$ is constant on each of the intervals $[-1, t_1), (t_1,t_2), \dots, (t_L, 1]$. (Here, we assume that $L\geq 1$;  for $L=0$ the function is constant on $[-1,1]$ and the conclusion follows immediately.)  It remains to verify that $|\mathcal M(\gamma(t))|$ does not change when $t$ crosses an exceptional time $t_\ell$ (although the value exactly at $t_\ell$ may be different).

Fix an exceptional time $t_\ell$ with $\ell \in \{1,\dots, L\}$. After a reparametrization of the path we may assume that $t_\ell = 0$.  Our aim is to show that the function $t\mapsto |\mathcal M (\gamma(t))|$ stays constant for sufficiently small $t \neq 0$.

Let $I\subseteq [n]$ be the unique $d$-set with $\gamma(0) \in D_I$.
By the continuity property, the function $t\mapsto \mathbb{1}_{\{0\notin H(\eps\gamma(t))\}}$ stays constant for sufficiently small $t$ provided $\eps$ is such that  $0 \notin \partial H(\eps\gamma(0))$.  So, let
$$
\mathcal M_\partial := \{\eps \in \{\pm 1\}^n : 0 \in \partial H(\eps\gamma(0))\}.
$$
Our aim is to show that for  $t$ sufficiently close to but not equal to $0$,
\begin{equation}\label{eq:wendel_proof_sum_half}
\sum_{\eps \in \mathcal M_\partial} \mathbb{1}_{\{0 \notin H(\eps\gamma(t))\}} = \frac 12 |\mathcal M_\partial|.
\end{equation}

Fix $\eps \in \mathcal M_\partial$ and consider the tuple
$(\eps_1 x_1(0), \dots, \eps_n x_n(0))$. Sign flips do not affect
linear dependence, so the degenerate $d$-sets of this tuple are those of
$\gamma(0)$; since $\gamma(0)$ lies on exactly one wall, the tuple has
exactly one degenerate $d$-set, namely $I$ --- note that it is the same
set for all $\eps \in \mathcal M_\partial$. Since moreover
$0 \in \partial H(\eps\gamma(0))$, Lemma~\ref{lem:separation}(iv)
applies to this tuple: the points $\eps_i x_i(0)$, $i \in I$, are
affinely independent, $0 \in \relint F_\eps$ for the $(d-1)$-simplex
$F_\eps := \Conv\big(\eps_i x_i(0) : i \in I\big)$, and there is a
unique unit vector $\nu_\eps$ such that
$$
\langle \nu_\eps, \eps_i x_i(0)\rangle = 0 \ \text{ for } i \in I
\qquad\text{and}\qquad
\langle \nu_\eps, \eps_m x_m(0)\rangle > 0 \ \text{ for } m \notin I.
$$

Now comes the key step of the proof. Define the flip $\iota : \{\pm 1\}^n \to \{\pm 1\}^n$ by
$$
(\iota(\eps))_i :=
\begin{cases}
-\eps_i, & i \in I,\\
\phantom{-}\eps_i, & i \notin I.
\end{cases}
$$
Clearly, $\iota \circ \iota = \mathrm{id}$, i.e.\ $\iota$ is an involution of $\{\pm 1\}^n$. Passing from $\eps$ to $\iota(\eps)$ replaces the points $\eps_i x_i(t)$, $i \in I$, by their negatives and leaves the points $\eps_i x_i(t)$, $i \notin I$, unchanged, for \emph{all} $t$. At time $0$ this has the following consequences.

First,
$F_{\iota(\eps)} = -F_\eps$, so $0 \in \relint F_{\iota(\eps)}$
and, in particular, $0 \in H(\iota(\eps)\gamma(0))$.
Second,
$\langle \nu_\eps, -\eps_i x_i(0)\rangle = 0$ for $i \in I$, while
$\langle \nu_\eps, \eps_i x_i(0)\rangle > 0$ for $i \notin I$;
hence, by Lemma~\ref{lem:separation}(ii) applied with $v = \nu_\eps$,
$0 \notin \Int H(\iota(\eps)\gamma(0))$, and therefore $0 \in \partial H(\iota(\eps)\gamma(0))$. Together this gives
$\iota(\eps) \in \mathcal M_\partial$ and, by the uniqueness in
Lemma~\ref{lem:separation}(iv), $\nu_{\iota(\eps)} = \nu_\eps$.
In particular, $\iota$ restricts to an involution, and hence a
bijection, of $\mathcal M_\partial$.

For small $|t|$ the points $\eps_i x_i(t)$, $i \in I$, remain affinely independent; choose a unit normal $\nu(t)$ of their affine hull continuously in $t$ with $\nu(0) = \nu_\eps$, fix any $i\in I$ and set
\begin{align*}
    \psi_\eps(t) := \langle \nu(t),\, \eps_i x_i(t) \rangle.
\end{align*}
This value is independent of the choice of $i\in I$. Note that $\psi_\eps(t) \neq 0$ for small $t \neq 0$: otherwise the origin would lie in the affine hull of the points $\eps_i x_{i}(t)$, $i \in I$, making these vectors linearly dependent and contradicting the regularity of $\gamma(t)$. We claim that for all small enough $|t|$,
\begin{align}\label{eq:wendel_membership}
    0 \in H(\eps\gamma(t)) \iff \psi_\eps(t) \le 0.
\end{align}

Indeed, for $m \notin I$ the inequality $\langle \nu(t), \eps_m x_m(t)\rangle > \psi_\eps(t)$ holds strictly at $t = 0$ and hence nearby, so $H(\eps\gamma(t))$ lies in the half-space $\{z \in \R^d \mid \langle \nu(t), z \rangle \ge \psi_\eps(t)\}$; this gives ``$\Rightarrow$''. Conversely, fix some $m_0 \notin I$. The $d+1$ points $\eps_i x_i(t)$, $i \in I$, and $\eps_{m_0} x_{m_0}(t)$ are affinely independent for small $|t|$, so the barycentric coordinates of the origin with respect to them are well defined and continuous in $t$; at $t = 0$ they are positive on $I$ and zero at $m_0$, because $0 \in \relint F_\eps$. Taking inner products with $\nu(t)$, the $m_0$-coordinate $\lambda(t)$ satisfies $\lambda(t)\big(\langle \nu(t), \eps_{m_0} x_{m_0}(t)\rangle - \psi_\eps(t)\big) = -\psi_\eps(t)$,  where the bracketed
factor is positive. Hence, if $\psi_\eps(t) \le 0$, all the barycentric coordinates are nonnegative and $0 \in H(\eps\gamma(t))$, which gives ``$\Leftarrow$''.

Finally, we run the same construction for $\iota(\eps)$. The points of the corresponding tuple with indices in $I$ are $-\eps_i x_{i}(t)$; their affine hull is $-\,\operatorname{aff}\big(\eps_i x_{i}(t) : i \in I\big)$ and carries the same unit normals, and at $t = 0$ the normal provided by Lemma~\ref{lem:separation}(iv) is $\nu_{\iota(\eps)} = \nu_\eps = \nu(0)$. Hence the same continuous choice $\nu(t)$ serves for $\iota(\eps)$ as well, and
\begin{align*}
    \psi_{\iota(\eps)}(t) = \langle \nu(t),\, -\eps_i x_{i}(t) \rangle = -\,\psi_\eps(t)
\end{align*}
for all small $|t|$. By~\eqref{eq:wendel_membership}, applied to $\eps$ and to $\iota(\eps)$, and since $\psi_\eps(t) \neq 0$ for small $t \neq 0$, exactly one of the polytopes $H(\eps\gamma(t))$ and $H(\iota(\eps)\gamma(t))$ contains the origin, that is
\begin{align*}
    \mathbb{1}_{\{0 \notin H(\eps\gamma(t))\}}
    + \mathbb{1}_{\{0 \notin H(\iota(\eps)\gamma(t))\}} = 1
\end{align*}
for all sufficiently small $t \neq 0$. Summing this equality over $\eps \in \mathcal M_\partial$ and using that $\iota : \mathcal M_\partial \to \mathcal M_\partial$ is a bijection gives
\begin{equation*}
2\sum_{\eps \in \mathcal M_\partial} \mathbb{1}_{\{0 \notin H(\eps\gamma(t))\}} = |\mathcal M_\partial|.
\end{equation*}
Dividing by $2$ gives~\eqref{eq:wendel_proof_sum_half}. Since the right-hand side does not depend on $t$, this shows that the function $t\mapsto |\mathcal M (\gamma(t))|$ stays constant for sufficiently small $t \neq 0$. Applying this claim to every crossing time $t_\ell$ proves that $|\mathcal M(X)| = |\mathcal M(Y)|$.

\medskip
\emph{Computation of the constant.} Since $|\mathcal M(X)|$ takes the same value at every regular configuration $X$, we may compute it on a single well-chosen one. Take real numbers $\tau_1 < \dots < \tau_n$ and let
$$
x_{i} := (1,\, \tau_i,\, \tau_i^2,\, \dots,\, \tau_i^{d-1}),
\qquad 1 \le i \le n,
$$
be points on the moment curve. Any $d$ columns of the resulting matrix $X$ form a Vandermonde matrix with pairwise distinct nodes, so $X$ is regular. By Lemma~\ref{lem:separation}(i),
$0 \notin H(\eps X)$ if and only if there is $v \in \R^d$ with
$\eps_i \langle v, x_i \rangle > 0$ for all $i$, i.e.\ if and only if there is a
polynomial $P$ with $\deg P \le d-1$ and $\sgn P(\tau_i) = \eps_i$ for all $i$.
Such $P$ exists if and only if the sequence $(\eps_1, \dots, \eps_n)$ has at most
$d-1$ sign changes: a nonzero polynomial of degree $\le d-1$ has at most $d-1$ sign
changes along $\tau_1 < \dots < \tau_n$; conversely, a pattern with $k \le d-1$
changes is realized by $\pm\prod_{\ell=1}^{k}(\tau - s_\ell)$ with one root $s_\ell$ placed
in each gap carrying a change. Counting the patterns by the initial sign and by the
set of gaps carrying a change gives
\begin{align*}
    |\mathcal M(X)| = 2 \sum_{j=0}^{d-1} \binom{n-1}{j}.
\end{align*}
This completes the proof.
\end{proof}

\begin{theorem}[Wendel's formula: Probabilistic version] \label{thm:wendel_prob}
Let $\xi_1, \dots, \xi_n$ be random vectors in $\R^d$ whose joint distribution is invariant under sign flips, that is,
$$
(\xi_1, \dots, \xi_n) \stackrel{d}{=} (\eps_1 \xi_1, \dots, \eps_n \xi_n)
\qquad \text{for every } \eps \in \{\pm 1\}^n.
$$
Suppose additionally that the configuration $X := (\xi_1 \,|\, \dots \,|\, \xi_n)$ is regular almost surely, i.e.\ $\pr(X \in \mathcal D) = 0$. Then
\begin{align*}
    \pr\big( 0 \notin \Conv(\xi_1, \dots, \xi_n) \big)
    = \frac{1}{2^{\,n-1}} \sum_{j=0}^{d-1} \binom{n-1}{j}.
\end{align*}
Note that neither independence nor exchangeability of $\xi_1, \dots, \xi_n$ is assumed.
\end{theorem}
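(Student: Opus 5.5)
The plan is to deduce Theorem~\ref{thm:wendel_prob} from the combinatorial version, Theorem~\ref{thm:wendel_comb}, by averaging over the sign-flip group. The argument mirrors the derivation of the probabilistic Sparre Andersen formula from its combinatorial counterpart.

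Let $\mu$ be the joint law of $X=(\xi_1\,|\,\dots\,|\,\xi_n)$ on $\R^{d\times n}$. By assumption, $\mu$ is invariant under every map $Z\mapsto \eps Z$ with $\eps\in\{\pm1\}^n$, and $\mu(\mathcal D)=0$. For each fixed $\eps$, invariance gives
\[
\pr\bigl(0\notin \Conv(\xi_1,\dots,\xi_n)\bigr)=\int \mathbb{1}_{\{0\notin H(Z)\}}\,\mu(dZ)=\int \mathbb{1}_{\{0\notin H(\eps Z)\}}\,\mu(dZ).
\]
Averaging this identity over all $2^n$ sign vectors and interchanging the finite sum with the integral yields
\[
\pr\bigl(0\notin \Conv(\xi_1,\dots,\xi_n)\bigr)=\frac{1}{2^n}\int |\mathcal M(Z)|\,\mu(dZ).
\]

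Since $\mu(\mathcal D)=0$, the integrand may be evaluated only on regular $Z$. There Theorem~\ref{thm:wendel_comb} gives the constant value $|\mathcal M(Z)|=2\sum_{j=0}^{d-1}\binom{n-1}{j}$ if $n\ge d+1$, and Remark~\ref{rem:wendel_for_n_leq_d} gives the same expression if $n\le d$. Dividing by $2^n$ produces $\frac{1}{2^{n-1}}\sum_{j=0}^{d-1}\binom{n-1}{j}$, which is the claimed formula. The complementary formula \eqref{eq:def-p-Wend} follows by subtracting from $1$ and using $\sum_{j=0}^{n-1}\binom{n-1}{j}=2^{n-1}$.

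The only points needing care are measure-theoretic. First, the events $\{0\notin H(\eps Z)\}$ must be measurable. This holds because the complement is the image of the closed set $\{(Z,\lambda): \lambda\in\Delta_{n-1},\ \sum_i\lambda_i\eps_i z_i=0\}$ under projection along the compact simplex $\Delta_{n-1}$, so it is closed. Second, the hypothesis $\pr(X\in\mathcal D)=0$ in Theorem~\ref{thm:wendel_prob} is exactly condition~(ii) of Theorem~\ref{thm:wendel}, since $\mathcal D$ is a finite union of the walls $D_I$. I expect no real obstacle: all the substance lies in the combinatorial theorem, and this step is a routine symmetrization.
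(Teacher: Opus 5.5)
Your proposal is correct and is essentially the paper's proof: you use the sign-flip invariance of $\mu$ to average the indicator of $\{0\notin H(\eps X)\}$ over all $2^n$ sign vectors, use $\mu(\mathcal D)=0$, and insert the constant value of $|\mathcal M(Z)|$ from Theorem~\ref{thm:wendel_comb}; your measurability remark is a useful detail that the paper leaves implicit. One small caveat concerns your aside on $n\le d$: the case $n=d$ is fine, but for $n<d$ there are no $d$-element index sets, so $\mathcal D=\emptyset$, and $\mu(\mathcal D)=0$ then does not give the linear independence that Remark~\ref{rem:wendel_for_n_leq_d} requires (for instance, $\xi_1=\dots=\xi_n=0$ is a counterexample); this case lies outside the section's standing assumption $n\geq d+1$, so the main argument is unaffected.
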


\begin{proof}
Let $\mu$ be the law of $X$ on $\R^{d\times n}$. By assumption, $\mu$ is invariant under every sign flip and $\mu(\mathcal D) = 0$. The sign flip invariance implies that $\pr\big(0 \notin \Conv(\xi_1, \dots, \xi_n)\big) = \pr\big(0 \notin H(\eps X)\big)$ for every $\eps \in \{\pm 1\}^n$. Averaging over the group and applying Theorem~\ref{thm:wendel_comb} gives
\begin{align*}
    \pr\big(0 \notin \Conv(\xi_1, \dots, \xi_n)\big)
    &= \frac{1}{2^n} \sum_{\eps \in \{\pm1\}^n} \pr\big(0 \notin H(\eps X)\big)
    \\
    &= \frac{1}{2^n} \int\limits_{\R^{d\times n}} |\mathcal M(Y)|\, \mu(dY)
    = \frac{1}{2^{\,n-1}} \sum_{j=0}^{d-1} \binom{n-1}{j}. \qedhere
\end{align*}
\end{proof}

\medskip
\emph{Characterization of the exceptional set}.
The exceptional set entered the above proof mainly through one property:
for $X\notin \mathcal D$ the origin lies on the boundary of no $H(\eps X)$. Excluding all of
$\mathcal D$ may look wasteful, since the vectors involved in a linear dependence that appears in the definition of $\mathcal D$
may lie deep inside $H(\eps X)$, and its  boundary need not contain the origin. The next proposition shows
that nothing is gained by excluding less: the freedom in the choice of $\eps$
turns any linear dependence into a boundary degeneracy.

\begin{proposition}[Exceptional set in Wendel's theorem]
\label{prop:wendel_exceptional_set}
For every $X\in\R^{d\times n}$,
\[
    X\in\mathcal D
    \quad\Longleftrightarrow\quad
    0\in\partial H(\eps X)
    \text{ for some }\eps\in\{\pm1\}^n.
\]
\end{proposition}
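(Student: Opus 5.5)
The direction ``$\Leftarrow$'' is immediate from the facts already used in the proof of Theorem~\ref{thm:wendel_comb}. If $X\notin\mathcal D$, then the columns of $X$, and hence the columns of every $\eps X$, are in general linear position. Lemma~\ref{lem:separation}(iii) then gives $0\notin\partial H(\eps X)$ for every $\eps$. Taking the contrapositive yields ``$\Leftarrow$''.

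For ``$\Rightarrow$'', let $X\in\mathcal D$. Then some $d$ columns are linearly dependent, so there is a nontrivial relation $\sum_{i\in J}\lambda_i x_i=0$ with $J\subseteq[n]$, $|J|\le d$, and all $\lambda_i\neq0$ for $i\in J$. We may take $J$ minimal, so that the $x_i$, $i\in J$, span a subspace of dimension $|J|-1\le d-1$. Define $\eps_i:=\sgn\lambda_i$ for $i\in J$. The relation becomes $\sum_{i\in J}|\lambda_i|\,\eps_i x_i=0$ with positive coefficients, so $0\in\Conv(\eps_i x_i:i\in J)\subseteq H(\eps X)$, whatever signs are chosen off $J$.

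It remains to choose the signs $\eps_m$ for $m\notin J$ so that $0$ is not an interior point. Let $W:=\operatorname{span}(x_i:i\in J)$. Since $\dim W\le d-1$, there is a unit vector $v\perp W$. We may choose $v$ so that $\langle v,x_m\rangle\neq0$ for every $m\notin J$: for each $m$ with $x_m\notin W$, the condition $\langle v,x_m\rangle=0$ cuts out a proper subspace of $W^\perp$, and a finite union of proper subspaces does not cover $W^\perp$. Put $\eps_m:=\sgn\langle v,x_m\rangle$ in that case. Then $\langle v,\eps_m x_m\rangle\ge0$ for all $m$, with equality exactly for the points in $W$.

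The one point needing care is a column $x_m\notin J$ that lies in $W$, possibly $x_m=0$. These points satisfy $\langle v,\eps_m x_m\rangle=0$ for either sign, so we set $\eps_m=+1$. Every point $\eps_k x_k$ then lies in the closed half-space $\{\langle v,\cdot\rangle\ge0\}$. By Lemma~\ref{lem:separation}(ii), or directly, $H(\eps X)$ lies in this half-space while $0$ lies on its boundary hyperplane, so $0\notin\Int H(\eps X)$. Combined with $0\in H(\eps X)$ this gives $0\in\partial H(\eps X)$.

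The main obstacle is only this bookkeeping: making sure the separating vector $v$ is chosen orthogonal to $W$ rather than to some arbitrary hyperplane. Any further degeneracies, such as points in $W$ or the zero vector, then land harmlessly on the supporting hyperplane. If $n\le d$ is ever allowed, the same argument works verbatim.
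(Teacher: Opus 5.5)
Your proof is correct and is essentially the paper's: one direction is the contrapositive via Lemma~\ref{lem:separation}(iii). For the other, you read signs off a linear relation to put $0$ in $H(\eps X)$, take a nonzero $v$ orthogonal to the dependent columns, and choose the remaining signs so that all $\eps_m x_m$ lie in $\{z:\langle v,z\rangle\geq0\}$. The paper takes $v$ orthogonal to the whole dependent $d$-set and allows zero inner products, so your minimal $J$ and generic choice of $v$ are harmless but unnecessary; your closing aside about $n\le d$ holds for $n=d$ but not for $n<d$ with $\mathcal D$ defined via $d$-element walls, since then $\mathcal D=\emptyset$ while, e.g., $x_2=-x_1$ gives $0\in\partial H(X)$.
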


\begin{proof}
Suppose first that $X\in\mathcal D$. Then there is a $d$-element set
$I\subseteq[n]$ and a nonzero vector $(a_i)_{i\in I}$ such that $\sum_{i\in I}a_i x_i=0$.

Let $J:=\{i\in I:a_i\neq0\}$ and put $\eps_i:=\sgn(a_i)$ for $i\in J$.
After normalization, the above relation shows that
\[
    0\in\Conv(\eps_i x_i:i\in J).
\]
Since the vectors $(x_i)_{i\in I}$ are linearly dependent, there is
$v\neq0$ orthogonal to all of them. Complete $\eps$ so that
$\langle v,\eps_m x_m\rangle\geq0$ for every $m\notin I$, choosing the
 signs on $I\setminus J$ arbitrarily. Then $0\in\Conv(\eps_i x_i:i\in J) \subseteq H(\eps X)$ and  all vertices of $H(\eps X)$ lie in
the closed half-space $\{z:\langle v,z\rangle\geq0\}$.
Hence $v^{\perp}$ is a supporting hyperplane of $H(\eps X)$ and $v^{\perp}$ contains $0$. This implies $0\in \partial H(\eps X)$.

Conversely, suppose that $X\notin\mathcal D$. Then the columns of
$\eps X$ are in general linear position for every $\eps$. Hence, by
Lemma~\ref{lem:separation}(iii), the origin is either outside
$H(\eps X)$ or belongs to its interior. Thus
$0\notin\partial H(\eps X)$ for every $\eps$, which proves the
contrapositive.
\end{proof}

\section{Absorption by convex hulls of random walks}\label{sec:convex_hulls_random_walks}

In this section, we prove the combinatorial result behind Theorem~\ref{thm:1}.
Let $d\geq 1$ and $n \ge d+1$. Let $X = (x_1\,|\,\dots\,|\,x_n) \in \R^{d\times n}$ be the matrix with columns $x_1, \dots, x_n \in \R^d$, and let
$$
s_k(X) := x_1 + \dots + x_k, \qquad  1 \le k \le n,\qquad s_0(X):=0,
$$
be  the partial sums of
its columns. The convex hull of these partial sums is a polytope denoted by
$$
H(X) := \Conv\big(s_1(X), \dots, s_n(X)\big)\subseteq \R^d.
$$
The underlying symmetry group is  $\mathfrak{B}_n$, the group of signed permutations of $n$ elements. In this section, this group acts on   $\R^{d\times n}$ by permuting
the columns and multiplying an arbitrary subset of them by $-1$. More precisely, we parametrize every element $g\in \mathfrak{B}_n$ by a pair $(\sigma, \eps)$, where $\sigma$ is a permutation  of $[n]$ and   $\eps\in \{\pm1\}^n$ is a vector of signs. Then $g: \R^{d\times n} \to \R^{d\times n}$ is a linear transformation defined by
$$
gX := (\eps_1 x_{\sigma(1)} \, |\, \dots \, |\, \eps_n x_{\sigma(n)}).
$$
For every $X\in \R^{d\times n}$ we consider $2^n n!$ polytopes of the form $H(gX)$, $g\in \mathfrak{B}_n$,  and  define
\begin{align*}
    \mathcal M(X) := \{ g \in \mathfrak{B}_n \mid 0 \notin H(gX) \}.
\end{align*}

The main combinatorial result of this section states that the size of the set $\mathcal M(X)$ is
the same for all $X$ outside an exceptional set $\mathcal D \subseteq \R^{d\times n}$ defined as follows. For
$g \in \mathfrak{B}_n$ and a $d$-element set $I \subseteq [n]$ define the
\emph{wall}
\begin{align} \label{eq:walldef}
    D_{g,I} := \{ X \in \R^{d\times n} \mid
    \text{the vectors } s_i(gX),\ i \in I, \text{ are linearly dependent} \},
\end{align}
and let the exceptional set $\mathcal D$ be the union of all such walls,
$$\mathcal D := \bigcup_{g \in \mathfrak{B}_n} \bigcup_{|I|=d} D_{g,I}.
$$
We call $X \in \R^{d\times n}$ \emph{regular} if $X \notin \mathcal D$.
Equivalently, $X$ is regular if for every $g \in \mathfrak{B}_n$ the partial
sums $s_1(gX), \dots, s_n(gX)$ are in general linear position. For $d = 1$
this recovers the notion of regularity used in Section~\ref{sec:Sparre-Andersen}.

\begin{theorem}[Absorption for convex hulls of walks: Combinatorial version]\label{thm:convex_hull_random_walk_comb}
Let $d\geq 1$ and $n \ge d+1$.
Let $x_1,\dots, x_{n}\in \R^d$ be vectors such that   $X= (x_1\, |\, \dots\,| \, x_n)$  is regular. Then,
$$
|\mathcal M(X)| = 2\sum_{q\geq 0}
\stirlingb{n}{d-1-2q},
$$
where $\stirlingb{n}{k}$ are the $B$-Stirling numbers defined in Theorem~\ref{thm:1}.
In particular, $|\mathcal M(X)|$ is the same for all $X\in \R^{d\times n}\setminus \mathcal D$.
\end{theorem}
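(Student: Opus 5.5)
The proof follows the same two-part scheme as Theorems~\ref{theo:1d_sparre_andersen} and~\ref{thm:wendel_comb}: we first show that $|\mathcal M(X)|$ is constant on the regular set, and then evaluate it at one convenient configuration.

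\emph{Constancy.} We connect two regular configurations $X,Y$ by a piecewise linear path $\gamma$ that meets $\mathcal D$ only at finitely many times. At each such time we require that, for every $g$, at most one $d$-set $I$ of partial sums of $g\gamma(t_\ell)$ be linearly dependent, and that this dependence be "simple". The existence of such a path would be proved in the appendix, analogously to Proposition~\ref{prop:pathexist_for_wendel}, by showing that the pairwise intersections of distinct walls $D_{g,I}$ have codimension at least $2$. Between crossing times the indicators are locally constant by Lemma~\ref{lem:stability} and Lemma~\ref{lem:separation}(iii). At a crossing time $t=0$ we restrict attention to
$$\mathcal M_\partial=\{g: 0\in\partial H(g\gamma(0))\}.$$
For $g\in\mathcal M_\partial$, Lemma~\ref{lem:separation}(iv), applied to the points $s_k(g\gamma(0))$, gives a unique degenerate $d$-set $I=\{i_1<\dots<i_d\}$ with $0\in\relint\Conv(s_i:i\in I)$, together with a normal $\nu$ that is strictly positive on all other partial sums. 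The task is then to build an involution $\iota$ of $\mathcal M_\partial$ such that $\iota(g)$ has the same degenerate set and the same normal, and such that the signed distance $\psi$ of the affine hull of the relevant partial sums from the origin changes sign: $\psi_{\iota(g)}(t)=-\psi_g(t)$. Once $\iota$ is in hand, the membership criterion~\eqref{eq:wendel_membership}, which carries over verbatim, gives the desired half/half split.

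\emph{Constructing the involution.} This is the main obstacle. Set $i_0:=0$ and split the increments into blocks $B_j=(i_{j-1},i_j]$ for $j=1,\dots,d$, plus a tail $(i_d,n]$. The partial sums $s_{i_j}$ are the block sums. The natural generalization of $\rho_m$ is to apply the order-reversing sign flip $\rho$ inside the relevant blocks, possibly together with a global operation on the initial segment $[1,i_d]$. The aim is that the new partial sums at the positions of $I$ become the negatives of the old ones, while every other partial sum is changed by subtracting an element of $\mathrm{span}(s_i:i\in I)=\nu^\perp$, by telescoping as in~\eqref{eq:partial_sum_after_rho}. This would keep their $\nu$-components positive.

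The first thing to try is $\iota(g):=\rho_{i_d}\,g$, i.e.\ reversing the whole initial segment up to $i_d$ and flipping all its signs. By~\eqref{eq:partial_sum_after_rho}, this sends $s_j$ to $s_{i_d-j}-s_{i_d}$ for $j\le i_d$ and to $s_j-2s_{i_d}$ for $j>i_d$. Since $s_{i_d}\in\nu^\perp$, all $\nu$-components are preserved up to sign issues. However, the degenerate positions become $\{i_d-i_j\}\cup\{i_d\}$ with vectors $-s_{i_d},\, s_{i_{d-1}}-s_{i_d},\dots$, which are not simply negated. So I would instead check whether the affine hull $\mathrm{aff}(s_i:i\in I)$ is mapped to its reflection through the origin; this is exactly what is needed for $\psi\mapsto-\psi$. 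If $\rho_{i_d}$ alone fails, I would compose reversals on each block $B_j$ (each preserves the block sums up to sign) and verify the sign bookkeeping case by case. I would also check that the resulting map is an involution with $m$-data preserved, as in the Sparre Andersen proof.

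\emph{Computing the constant.} We take $x_k=R^{k-1}u_k$ with generic unit vectors $u_k$ and large $R$, and prove the records reduction stated in the introduction. For each $k$, $s_k(gX)$ is dominated by the term $\eps_i R^{\sigma(i)-1}u_{\sigma(i)}$ coming from the last record $i\le k$. A separating vector $v$ with $\langle v,\eps_iu_{\sigma(i)}\rangle>0$ on the records then separates all partial sums for large $R$, and conversely via Lemma~\ref{lem:separation}(i). The configuration is regular for generic $u$ and large $R$. Conditioning on the record set and applying Theorem~\ref{thm:wendel_comb} (together with Remark~\ref{rem:wendel_for_n_leq_d} when there are at most $d$ records) gives
$$|\mathcal M(X)|=\sum_k \stirling{n}{k}\,2^{n-k}\cdot 2\sum_{j=0}^{d-1}\binom{k-1}{j}.$$
A generating-function computation then converts this into $2\sum_q\stirlingb{n}{d-1-2q}$. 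Concretely, use $\sum_k\stirling nk 2^{n-k}(1+t)^{k-1}$ together with $(t+1)(t+3)\cdots(t+2n-1)$ to identify coefficients, noting that the truncation of $(1+t)^{k-1}$ at degree $d-1$ corresponds to keeping the parity-matched terms.
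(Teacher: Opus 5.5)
Your overall plan is the paper's. Constancy across a generic wall crossing comes from an involution of $\mathcal M_\partial$ that flips the sign of $\psi$. The constant is then evaluated at $x_k=R^{k-1}u_k$ via records, Wendel's formula and the coefficient identity of Lemma~\ref{lem:constants_equivalents}. That second half is sound. In the converse direction of the records reduction you need a limiting argument with normalized separating vectors, using that $0$ lies in the \emph{interior} of the record hull by general position, but this is routine.

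The gap is that the involution, the only non-routine step of the constancy part, is never pinned down. You correctly reject $\rho_{i_d}$, but then leave open whether to use block reversals alone or ``together with a global operation on $[1,i_d]$'', to be settled case by case. The second option fails. Reversing and negating each block and then applying $\rho_{i_d}$ just reverses the order of the blocks, which is the bridge involution. The new boundary partial sums are then $s_{i_d}-s_{i_j}$, $0\le j\le d-1$; for $\rho_{i_d}$ alone they are their negatives. The remaining partial sums still have positive $\nu$-component, so the new polytope meets $\nu^\perp$ only in $\Conv(s_{i_d}-s_{i_j}:0\le j<d)$. Now $0=\sum_{j=1}^d\alpha_j s_{i_j}$ with all $\alpha_j>0$, and any $d-1$ of the $s_{i_j}$ are linearly independent. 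So the origin in that hull would require $s_{i_d}=\sum_{j=1}^{d-1}\beta_j s_{i_j}$ with $\beta_j\ge 0$, whereas the unique such representation has $\beta_j=-\alpha_j/\alpha_d<0$. Hence for $d\ge2$ the origin is not in the new polytope at $t=0$, and the image is not in $\mathcal M_\partial$.

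What works is the block-wise map alone. Let $\rho_I$ reverse and negate each block $(x_{i_k+1},\dots,x_{i_{k+1}})$, $0\le k\le d-1$, separately (with $i_0:=0$), and fix the tail. Telescoping gives, identically in $X$,
\[
s_\ell(\rho_I X)=s_{i_k+i_{k+1}-\ell}(X)-s_{i_k}(X)-s_{i_{k+1}}(X)\quad (i_k<\ell\le i_{k+1}),\qquad s_\ell(\rho_I X)=s_\ell(X)-2s_{i_d}(X)\quad (\ell>i_d),
\]
so $s_i(\rho_I X)=-s_i(X)$ for $i\in I$. Because this holds for every $X$, the same normal $\nu(t)$ serves for $g$ and $\rho_I g$ along the whole path, which gives $\psi_{\rho_I g}(t)=-\psi_g(t)$. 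Because $s_{i_k},s_{i_{k+1}}\in\nu^\perp$ at $t=0$, the $\nu$-components of the other partial sums are permuted inside each block by $\ell\mapsto i_k+i_{k+1}-\ell$ and unchanged in the tail, so they stay positive. Note that these partial sums are not merely shifted in place by elements of $\nu^\perp$, as your stated aim suggests; the reindexing is essential. Hence $\iota(g):=\rho_{I(g)}g$ lies in $\mathcal M_\partial$ with $I(\iota(g))=I(g)$ and the same normal, and $\rho_I^2=\mathrm{id}$ makes $\iota$ an involution. With this map supplied, the rest of your argument goes through as in the paper.
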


We now turn to the proof of Theorem~\ref{thm:convex_hull_random_walk_comb}, which will occupy most of this section. As in the previous section, the proof consists of two main parts: proving the constancy outside the exceptional set and identifying the value of the constant.

Let $X\in \R^{d\times n}$ and $Y\in \R^{d\times n}$ be regular.
We claim that there is a piecewise linear path $(\gamma(t))_{t\in [-1,1]}$ such that $\gamma(-1) = X, \gamma(1) = Y$ and
\begin{itemize}
    \item[(i)] $\gamma(t)$ is regular for all but finitely many $t$; denote these exceptional times by $t_1,\dots, t_L$ with  $-1 < t_1 < \dots < t_L < 1$;
    \item[(ii)] for every exceptional time $t_\ell$ and every $g \in \mathfrak{B}_n$, there is at most one $d$-element set $I\subseteq [n]$ with $\gamma(t_\ell) \in D_{g,I}$.
\end{itemize}
The existence of the path will be shown in Proposition~\ref{prop:pathexists_general} in  Appendix~\ref{sec:paths_existence}. To apply it, note that the matrix  $(s_1(gX)\,|\, \dots \,|\, s_n(gX))$ can be represented as $XB_g$ for a suitable invertible matrix $B_g \in \operatorname{GL}_n(\R)$.

\medskip
\emph{Constancy along a path.}
Our aim is to prove that $|\mathcal M(X)| = |\mathcal M(Y)|$.
By Lemma~\ref{lem:separation}~(iii) in
Appendix~\ref{convex_geometry}, for $t\notin \{t_1,\ldots, t_L\}$,
 we have $0\notin \partial H(g\gamma(t))$. Hence, by Lemma~\ref{lem:stability} in Appendix~\ref{convex_geometry}, for every $g\in \mathfrak{B}_n$,  the function
$t \mapsto \mathbb{1}_{\{0 \in H(g\gamma(t))\}}$ is constant on each connected component of $[-1,1]\setminus \{t_1,\dots, t_L\}$.
It follows that the value $|\mathcal M(\gamma(t))|$ is also constant on each connected component.  Thus, it suffices to show that for each $\ell$ the value $|\mathcal M(\gamma(t))|$ does not change as $t$ crosses $t_\ell$ (although the value exactly at $t_\ell$ may be different). It suffices to consider a path with only one exceptional time $t=0$. Our task reduces to proving the following result.

\begin{proposition}[Wall-crossing invariance]\label{prop:crossing}
Let $\gamma : [-1,1] \to \R^{d\times n}$ be a piecewise linear path such that $\gamma(0)\in \mathcal D$ and $\gamma(t)$ is regular for all $t \neq 0$. Suppose also that  for every $g \in \mathfrak{B}_n$ there is at most one $d$-element set $I\subseteq [n]$ with $\gamma(0) \in D_{g,I}$. Then the function
$$
t\mapsto |\mathcal M(\gamma(t))|
$$ stays constant for all $t\neq 0$ with sufficiently small $|t|$.
\end{proposition}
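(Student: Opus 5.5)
The plan is to copy the pairing argument from the proof of Theorem~\ref{thm:wendel_comb}, with the partial sums $s_1(gX),\dots,s_n(gX)$ in the role of the points. The flip of signs on $I$ is replaced by a signed permutation that negates exactly the partial sums indexed by $I$ and keeps the polytope on the same side of the separating hyperplane.

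First, I split $|\mathcal M(\gamma(t))|$ as in the previous sections. If $0\notin\partial H(g\gamma(0))$, then by Lemma~\ref{lem:stability} the indicator $\mathbb 1_{\{0\notin H(g\gamma(t))\}}$ is constant for small $|t|$. So it suffices to treat
$$\mathcal M_\partial:=\{g\in\mathfrak B_n: 0\in\partial H(g\gamma(0))\}$$
and to prove the analogue of~\eqref{eq:wendel_proof_sum_half}. For $g\in\mathcal M_\partial$, Lemma~\ref{lem:separation}(iii) forces some $d$ partial sums of $g\gamma(0)$ to be linearly dependent, and by hypothesis the corresponding $d$-set $I=I(g)=\{i_1<\dots<i_d\}$ is unique. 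Lemma~\ref{lem:separation}(iv), applied to the tuple $(s_k(g\gamma(0)))_k$, then gives three facts: the points $s_i$, $i\in I$, are affinely independent; $0$ lies in the relative interior of their simplex; and there is a unique unit $\nu_g$ with $\langle\nu_g,s_i\rangle=0$ for $i\in I$ and $\langle\nu_g,s_k\rangle>0$ for $k\notin I$.

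Next I construct the involution. Put $i_0:=0$ and cut the increments of $g\gamma(0)$ into blocks $(i_{j-1},i_j]$, $j=1,\dots,d$, followed by the tail $(i_d,n]$. Let $\rho_I\in\mathfrak B_n$ reverse the order of the increments inside each block $(i_{j-1},i_j]$ and flip their signs, leaving the tail untouched. This is the one-dimensional map $\rho_m$ of Section~\ref{sec:Sparre-Andersen} applied blockwise. By the telescoping identity~\eqref{eq:partial_sum_after_rho}, applied block by block, the partial sums of $\rho_I Z$ for any $Z$ are as follows:
\begin{itemize}
\item at the times $i_j$ they equal $-s_{i_j}(Z)$;
\item inside the $j$-th block, at time $i_{j-1}+r$, they equal $-s_{i_{j-1}}(Z)+s_{i_j-r}(Z)-s_{i_j}(Z)+s_{i_{j-1}}(Z)$ up to the bookkeeping of the telescoping, which I will check;
\item in the tail they equal $s_k(Z)-2s_{i_d}(Z)$.
\end{itemize}
At $t=0$ we have $\langle\nu_g,s_{i_j}\rangle=0$, so the $\nu_g$-values of the new partial sums coincide with the old ones on every non-$I$ index, up to the reindexing $k\leftrightarrow i_{j-1}+i_j-k$ inside each block. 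In particular they are still $>0$ off $I$ and $=0$ on $I$, while the points indexed by $I$ become $-s_I$, whose simplex still has $0$ in its relative interior.

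Setting $\iota(g):=\rho_{I(g)}g$, I then expect $\iota(g)\in\mathcal M_\partial$, $I(\iota(g))=I(g)$ and $\nu_{\iota(g)}=\nu_g$. Since $\rho_I$ is an involution, $\iota$ is an involution of $\mathcal M_\partial$.

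Finally I redo the local analysis of the Wendel proof. Choose a continuous unit normal $\nu(t)$ to $\operatorname{aff}(s_i(g\gamma(t)):i\in I)$ with $\nu(0)=\nu_g$, and set $\psi_g(t):=\langle\nu(t),s_{i}(g\gamma(t))\rangle$ for any $i\in I$. This is nonzero for $t\neq0$ by regularity. The same barycentric argument as in~\eqref{eq:wendel_membership} gives $0\in H(g\gamma(t))\iff\psi_g(t)\le0$. Because the partial sums of $\iota(g)\gamma(t)$ indexed by $I$ are exactly $-s_i(g\gamma(t))$ for all $t$, and not only at $t=0$, the same $\nu(t)$ serves for $\iota(g)$ and $\psi_{\iota(g)}(t)=-\psi_g(t)$. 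Hence exactly one of $g,\iota(g)$ has $0\notin H$ for small $t\ne0$, and summing over the involution gives half of $|\mathcal M_\partial|$.

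The main obstacle is the exact bookkeeping of the partial sums of $\rho_I Z$ inside the blocks and in the tail. I need to confirm three things: their $\nu$-values at $t=0$ are a permutation of the old positive values; the identity "the $I$-indexed partial sums are negated" holds for every $Z$; and the tail's shift by $-2s_{i_d}$ does not spoil the sign of its $\nu_g$-values. This last point is harmless because $\langle\nu_g,s_{i_d}\rangle=0$.
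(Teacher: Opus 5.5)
Your proposal is correct and follows the paper's proof essentially step for step. It uses the same blockwise reverse-and-negate involution $\rho_{I(g)}$, the same appeal to Lemma~\ref{lem:separation}(iv) to get $I(\iota(g))=I(g)$ and $\nu_{\iota(g)}=\nu_g$, and the same local sign function with $\psi_{\iota(g)}(t)=-\psi_g(t)$. One slip in your in-block bookkeeping: the correct identity is $s_{i_{j-1}+r}(\rho_I Z)=s_{i_j-r}(Z)-s_{i_{j-1}}(Z)-s_{i_j}(Z)$, and your extra $+s_{i_{j-1}}(Z)$ would contradict your own first bullet for $j\geq 2$; since $s_{i_{j-1}}$ and $s_{i_j}$ are both orthogonal to $\nu_g$ at $t=0$, your conclusion about the $\nu_g$-values is unaffected.
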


\begin{proof}

Put $\mathcal M_\partial := \{ g \in \mathfrak{B}_n \mid 0 \in \partial H(g\gamma(0)) \}$.
By Lemma~\ref{lem:stability}, for $g \notin \mathcal M_\partial$ the indicator
$\mathbb 1_{\{0 \in H(g\gamma(t))\}}$ is constant for small $|t|$. (The origin stays inside a convex hull after a small perturbation if it was strictly inside at time $0$. Similarly, the origin stays outside if it was outside at time $0$.) Therefore, it suffices to verify that $\big|\mathcal M (\gamma(t)) \cap \mathcal M_\partial\big|$ stays constant for all sufficiently small $t\neq 0$.  We shall show a stronger statement, namely
\begin{align} \label{Mpartialnum}
    \sum_{g \in \mathcal M_\partial} \mathbb 1_{\{0 \notin H(g\gamma(t))\}}
=
    \frac12 |\mathcal M_\partial|
\end{align}
for all sufficiently small $t\neq 0$. This will be done by constructing an explicit bijection $\iota: \mathcal M_\partial \to \mathcal M_\partial$ such that
\begin{align} \label{eq:twoindicators_0}
    \mathbb 1_{\{0 \notin H(g\gamma(t))\}}
    + \mathbb 1_{\{0 \notin H(\iota(g)\gamma(t))\}} = 1
\end{align}
for all $g\in \mathcal M_\partial$ and all sufficiently small $t\neq 0$. In words, $0$ belongs to exactly one convex hull, $H(g\gamma(t))$ or $H(\iota(g)\gamma(t))$, for small $t\neq 0$.   Summing~\eqref{eq:twoindicators_0} over all  $g\in \mathcal M_\partial$ and dividing by $2$  gives~\eqref{Mpartialnum}.

Fix some $g \in \mathcal M_\partial$ and consider the tuple
$\big(s_1(g\gamma(0)), \dots, s_n(g\gamma(0))\big)$.
By definition of $\mathcal M_\partial$, one has $0 \in \partial H(g\gamma(0))$. By Lemma~\ref{lem:separation}(iii), there is a $d$-element set $I\subseteq [n]$ such that the vectors $s_i(g\gamma(0))$, $i\in I$, are linearly dependent (in other words, $\gamma(0)\in D_{g,I}$).
By the assumption of the proposition, for fixed $g$, such a set $I$ is unique. Denote
it by $I(g) = \{i_1 < \dots < i_d\}$. By
Lemma~\ref{lem:separation}(iv), the points $s_i(g\gamma(0))$,
$i \in I(g)$, are affinely independent, $0 \in \relint F$ for the
$(d-1)$-simplex $F := \Conv\big(s_i(g\gamma(0)) : i \in I(g)\big)$,
and there is a unique unit vector $\nu$ such that
\begin{align}\label{eq:normal_nu_scalar_prod_walks}
    \langle \nu, s_i(g\gamma(0)) \rangle = 0 \ \text{ for } i \in I(g)
    \qquad \text{and} \qquad
    \langle \nu, s_m(g\gamma(0)) \rangle > 0 \ \text{ for } m \notin I(g).
\end{align}

Now comes the key step of the proof.  For $I = \{i_1 < \dots < i_d\} \subseteq [n]$,  define a linear
map $\rho_I : \R^{d\times n} \to \R^{d\times n}$ as follows: for all $X=(x_1 \, |\, \dots \,|\, x_n)\in \R^{d\times n}$ we put
\begin{align*}
    (\rho_I X)_j
    =
    \begin{cases}
        -\,x_{i_k + i_{k+1} + 1 - j},
            & i_k < j \le i_{k+1},\ k \in \{0, \dots, d-1\},\\[2pt]
        x_{j}, & j > i_d,
    \end{cases}
\end{align*}
with the convention $i_0 := 0$.
In words: inside each block $(x_{i_k+1},x_{i_k+2}, \dots, x_{i_{k+1}})$ with $k\in \{0,\dots, d-1\}$, the components
are reversed and their signs are flipped. So, this block transforms according to the rule
$$
(x_{i_k+1},x_{i_k+2}, \dots, x_{i_{k+1}})\mapsto  (-x_{i_{k+1}},-x_{i_{k+1}-1},\dots, -x_{i_k+1}).$$  The tail block $(x_{i_d+1}, \dots,  x_{n})$ remains untouched.

As a direct consequence of the definition,
$\rho_I \in \mathfrak{B}_n$ and $\rho_I \circ \rho_I = \mathrm{id}$.
A telescoping computation gives
\begin{align}\label{eq:sums_rho_I_walks}
s_{\ell}(\rho_I X)
=
\begin{cases}
s_{i_k + i_{k+1} - \ell}(X) - s_{i_k}(X) - s_{i_{k+1}}(X), & i_k < \ell \le i_{k+1},
\\
s_{\ell}(X) - 2\, s_{i_d}(X), &\ell > i_d,
\end{cases}
\end{align}
again with the convention $i_0 := 0$.
In particular,
\begin{equation}\label{eq:sums_rho_I_walks_partial_case}
s_{i}(\rho_I X) = -\,s_{i}(X) \qquad \text{ for all } i \in I.
\end{equation}

For $g\in \mathcal M_\partial$ we define $\iota(g):=\rho_{I(g)}g\in \mathfrak{B}_n$.  We now prove that $\iota$ maps $\mathcal M_\partial$ to itself and $\iota \circ \iota = \mathrm{id}$ on $\mathcal M_\partial$.   By~\eqref{eq:sums_rho_I_walks_partial_case},
$$
s_i (\iota(g) \gamma(0)) = - s_i (g \gamma(0)), \qquad i\in I(g).
$$
We now compute the inner products of $\nu$ with  $s_1(\iota(g)\gamma(0)), \dots,  s_n(\iota(g)\gamma(0))$. By~\eqref{eq:sums_rho_I_walks_partial_case} and~\eqref{eq:normal_nu_scalar_prod_walks},
\begin{align*}
    \langle \nu, s_i(\iota(g)\gamma(0)) \rangle
    =
    -\langle \nu, s_i(g\gamma(0)) \rangle
    = 0, \quad \text{ for all }  i \in I(g).
\end{align*}
Next, it follows from~\eqref{eq:sums_rho_I_walks} and~\eqref{eq:normal_nu_scalar_prod_walks} that for all $\ell\in [n]\setminus I(g)$,
\begin{align} \label{eq:nu_positive_scalar_product_iota_walk}
    \langle \nu, s_\ell(\iota(g)\gamma(0)) \rangle >0.
\end{align}
It follows that the simplex
$$
\Conv\big(s_i(\iota(g)\gamma(0)) : i \in I(g)\big)
=
\Conv\big(-s_i(g\gamma(0)) : i \in I(g)\big) =
-F
$$
is a face of $H(\iota(g)\gamma(0))$ with unit normal vector $\nu$. As we already know,  $0 \in \relint F$. Hence, $0\in \relint (-F)$ and, in particular, $0\in \partial H(\iota(g) \gamma(0))$. This shows that $\iota(g) \in \mathcal M_\partial$ and, moreover, $I(\iota(g)) = I(g)$. Since $\rho_I$ is an involution for every $d$-element set $I\subseteq [n]$, we conclude that  $\iota \circ \iota = \mathrm{id}$ on $\mathcal M_\partial$. In particular, $\iota: \mathcal M_\partial \to \mathcal M_\partial$ is a bijection.

We now prove the key property of the pairing $g \leftrightarrow \iota(g)$: for small nonzero $t$, exactly one of the polytopes  $H(g\gamma(t))$ and $H(\iota(g)\gamma(t))$ contains the origin, or, equivalently,
\begin{align} \label{eq:twoindicators}
    \mathbb 1_{\{0 \notin H(g\gamma(t))\}}
    + \mathbb 1_{\{0 \notin H(\iota(g)\gamma(t))\}} = 1.
\end{align}
As already explained, this is the only missing ingredient in the proof of   Proposition~\ref{prop:crossing}.

Fix $g \in \mathcal M_\partial$, write $I := I(g)$ and
$s_{m}(t) := s_{m}(g\gamma(t))$. For small $|t|$ the points $s_{i}(t)$,
$i \in I$, remain affinely independent; choose a unit normal $\nu(t)$ of their
affine hull continuously in $t$ with $\nu(0) = \nu$, take some $i\in I$ and set
\begin{align*}
    \psi_g(t) := \langle \nu(t),\, s_{i}(t) \rangle.
\end{align*}
 This value is independent of the choice of $i\in I$. Note that $\psi_g(t) \neq 0$ for
$t \neq 0$: otherwise the origin would lie in the affine hull of the
$s_{i}(t)$, $i \in I$, making these vectors linearly dependent and
contradicting the regularity of $\gamma(t)$ for $t\neq 0$. We claim that for sufficiently small $|t|$,
\begin{align}\label{eq:membership}
    0 \notin H(g\gamma(t)) \iff \psi_g(t) > 0.
\end{align}

Indeed, for $m \notin I$ the inequality
$\langle \nu(t), s_{m}(t)\rangle > \psi_g(t)$ holds strictly at $t = 0$ and
hence nearby, so the points $s_1(t),\dots, s_n(t)$ are contained in the affine half-space
$\{z \in \R^d \mid \langle \nu(t), z\rangle \ge \psi_g(t)\}$. This implies that their convex hull $H(g\gamma(t))$ is contained in the same half-space and gives
``$\Leftarrow$''. We now prove ``$\Rightarrow$''. Assume that $\psi_g(t) \leq 0$. We need to show that $0\in H(g\gamma(t))$.    Fix $m_0 \notin I$. The $d+1$ points
$s_{i}(t)$, $i \in I$, and $s_{m_0}(t)$ are affinely independent for small
$|t|$, so the barycentric coordinates of the origin with respect to them are
well defined and continuous in $t$; at $t = 0$ they are positive on $I$ and
zero at $m_0$, because $0 \in \relint F$. Taking inner products  with $\nu(t)$, the
$m_0$-coordinate $\lambda(t)$ satisfies
$$
\lambda(t)\big(\langle \nu(t), s_{m_0}(t)\rangle - \psi_g(t)\big)
= -\psi_g(t)
$$
where the bracketed factor is positive.  After shrinking the neighbourhood if necessary, the barycentric
coordinates indexed by $I$ remain strictly positive. If
$\psi_g(t)\leq0$, the displayed identity gives $\lambda(t)\geq0$
(and $\lambda(t)>0$ for $t\neq0$), so all barycentric coordinates
are nonnegative. So $0 \in H(g\gamma(t))$. This gives
``$\Rightarrow$''.

We are now going to apply~\eqref{eq:membership} to $g$ and $\iota(g)$. By \eqref{eq:sums_rho_I_walks_partial_case},
$s_{i}(\iota(g)\gamma(t)) = -\,s_{i}(g\gamma(t))$ for $i \in I(g)$ and all
$t$, so $\nu(t)$ is a unit normal of the affine hull of these points as well,
and at $t = 0$ the remaining points $s_{m}(\iota(g)\gamma(t))$, $m\notin I$, have positive inner products with $\nu(0)$ as shown in~\eqref{eq:nu_positive_scalar_product_iota_walk}. Thus we can run the above construction both for $g$ and $\iota(g)$
 with the same unit normal $\nu(t)$. This  yields
\begin{align*}
    \psi_{\iota(g)}(t)
    = \langle \nu(t),\, -s_{i}(g\gamma(t)) \rangle
    = -\,\psi_g(t).
\end{align*}
Since $\psi_g(t) \neq 0$ for $t \neq 0$, exactly one of
$\psi_g(t)$, $\psi_{\iota(g)}(t)$ is negative. Together with~\eqref{eq:membership}, this  implies~\eqref{eq:twoindicators}. The proof of the  proposition is complete.
\end{proof}

\medskip
\emph{The value of the constant.}
We now identify the constant in Theorem~\ref{thm:convex_hull_random_walk_comb}. This is done by evaluating $|\mathcal M(X)|$ at a well-chosen regular configuration $X$.
We parametrize $g \in \mathfrak{B}_n$ by
a permutation $\sigma$ of $[n]$ and signs $\varepsilon \in \{\pm 1\}^n$,
so that $gX = \big(\varepsilon_1 x_{\sigma(1)}, \dots,
\varepsilon_n x_{\sigma(n)}\big)$. A position $i \in [n]$ is a
\emph{record} of $\sigma$ if $\sigma(j) < \sigma(i)$ for all $j < i$;
we write $\mathrm{Rec}(\sigma)$ for the set of record positions  of $\sigma$, and  $\mathrm{rec}(\sigma) = |\mathrm{Rec}(\sigma)|$ for the number of records.

\begin{proposition}[Reduction to records]\label{prop:lacunary}
    Let $u_1, \dots, u_n$ be unit vectors in $\R^d$ in general linear
    position, and for $R > 1$ set
    \begin{align*}
        \lambda_R := \big(u_1 \, |\, R\,u_2 \, |\,  R^2 u_3\, | \,  \dots \, | \,
        R^{n-1} u_n\big) \in \R^{d\times n}.
    \end{align*}
    Then there is $R_0 = R_0(u_1, \dots, u_n)$ such that for every
    $R \ge R_0$ the configuration $\lambda_R$ is regular and, for every
    $g = (\sigma, \varepsilon) \in \mathfrak{B}_n$,
    \begin{equation}\label{eq:conv_hulls_RW_constant_equivalence_to_records}
        0 \in H(g\lambda_R)
        \iff
        0 \in \Conv\big( \varepsilon_i\, u_{\sigma(i)} \;:\;
           i\in \mathrm{Rec}(\sigma) \big).
    \end{equation}
\end{proposition}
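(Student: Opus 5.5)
The plan is to rescale and pass to a limit. Fix $g=(\sigma,\eps)$ and write $y_j:=\eps_j u_{\sigma(j)}$ and $a_j:=\sigma(j)-1$. The partial sums are then $s_k(g\lambda_R)=\sum_{j\le k}R^{a_j}y_j$.

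Let $r(k)$ be the last record position $\le k$. This is the position of the maximum of $\sigma(1),\dots,\sigma(k)$. Then
\[
s_k(g\lambda_R)=R^{a_{r(k)}}\bigl(y_{r(k)}+O(R^{-1})\bigr),
\]
and the error is uniform in $k$ and $g$. Since $0\in\Conv(\cdot)$ is unchanged when each point is multiplied by a positive scalar, we get
\[
0\in H(g\lambda_R)\iff 0\in\Conv\bigl(z_k(R):1\le k\le n\bigr),\qquad z_k(R):=R^{-a_{r(k)}}s_k(g\lambda_R)\to y_{r(k)}.
\]
Every limit point $y_{r(k)}$ is a record vector, and every record vector occurs (take $k$ a record). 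So the limiting configuration has convex hull $Q:=\Conv(y_i:i\in\mathrm{Rec}(\sigma))$, the right-hand side of \eqref{eq:conv_hulls_RW_constant_equivalence_to_records}.

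The record vectors are signed copies of distinct $u$'s, so they are in general linear position. By Lemma~\ref{lem:separation}(iii), $0\notin\partial Q$: either $0\in\Int Q$ or $0\notin Q$ (when there are at most $d$ records, $0\notin Q$ by linear independence). Lemma~\ref{lem:stability} then says membership of the origin is stable under small perturbations of the points. One subtlety is that the perturbed tuple has repeated near-copies of the same limit point. I will handle this by noting that the tuple $(z_1(R),\dots,z_n(R))$ converges to the tuple $(y_{r(1)},\dots,y_{r(n)})$. That tuple has the same convex hull $Q$ and $0\notin\partial Q$. Lemma~\ref{lem:stability} is stated for tuples and only needs $0\notin\partial$ of the hull, so it applies. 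If the lemma does require general position, I would instead argue directly. If $0\notin Q$, a separating vector $v$ with $\langle v,y_i\rangle>0$ persists under perturbation. If $0\in\Int Q$, a ball around $0$ lies in $Q$, and the Hausdorff convergence of the hulls keeps $0$ inside. Finiteness of $\mathfrak B_n$ gives a uniform $R_0$ for the equivalence.

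It remains to show regularity of $\lambda_R$ for large $R$, and I expect this to be the main obstacle. For a $d$-set $I=\{k_1<\dots<k_d\}$, the rescaled vectors $z_{k}(R)$ tend to $y_{r(k)}$. If the $r(k_j)$ are distinct, the limit is linearly independent, so the determinant is nonzero for large $R$. If two indices share the same last record, the leading terms coincide and one must look deeper. For $k<k'$ with $r(k)=r(k')$, replace $s_{k'}$ by $s_{k'}-s_k=\sum_{k<j\le k'}R^{a_j}y_j$; this is a column operation and does not change linear dependence. Iterating, we may subtract within each group of indices sharing a record, taking successive differences. Each resulting vector is a sum over a disjoint block of positions, and its leading term is $R^{\max}$ times the $y_j$ of the largest $\sigma$-value in that block. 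These blocks are disjoint, so the leading $y_j$'s are distinct signed $u$'s. After rescaling, the limit is therefore linearly independent, and the determinant is nonzero for large $R$. Taking the maximum of the thresholds over all $g$ and $I$ completes the choice of $R_0$.
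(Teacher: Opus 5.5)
Your proof is correct and follows the paper's strategy: each partial sum is dominated by its last record, $0\notin\partial Q$ comes from Lemma~\ref{lem:separation}(iii), and regularity follows from column subtractions and leading terms. The one real streamlining is that rescaling each $s_k$ by the positive factor $R^{-a_{r(k)}}$ lets you apply Lemma~\ref{lem:stability} directly to the limit tuple $(y_{r(1)},\dots,y_{r(n)})$. This is legitimate, since that lemma needs no general position and allows repeated points, and it replaces the paper's separate Case~1/Case~2 argument.

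In the regularity step, the claim ``each resulting vector is a sum over a disjoint block'' is not accurate, because the first member of each group is still a full prefix sum $s_k$. The leading vectors are nevertheless distinct: the group leaders are led by distinct records, and each difference block lies in a record-free stretch. It is simpler to subtract consecutively over all of $I$, as the paper does. That gives genuinely disjoint blocks $(k_{j-1},k_j]$ and makes the grouping unnecessary.
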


\begin{proof}
Fix a signed permutation $g = (\sigma, \varepsilon)$ in $\mathfrak{B}_n$. Take $1 \le m \le n$ and consider the partial sum
\begin{align*}
    s_m(g\lambda_R)
    = \sum_{i=1}^m \varepsilon_i\, R^{\,\sigma(i)-1}\, u_{\sigma(i)}.
\end{align*}
Let $i(m)$ be the position of the maximum among $\sigma(1), \sigma(2), \dots, \sigma(m)$.  So $i(m) \le m$ and
\begin{align*}
    \sigma(i(m)) \ge \sigma(k),\quad k = 1,2,\dots, m.
\end{align*}

Note that $i(m)$ is the last record position of $\sigma$ among the first $m$ positions.
Every other term carries an exponent smaller by at
least one, so
\begin{align} \label{eq:lacunaryest}
    s_m(g\lambda_R) = R^{\,\sigma(i(m))-1} \big( \varepsilon_{i(m)}
u_{\sigma(i(m))} + \theta_m(R) \big),
    \qquad
    |\theta_m(R)| \le \frac{n}{R}.
\end{align}

The collection of record vectors $\varepsilon_{i}u_{\sigma(i)}$, where $i$ runs through the record positions of $\sigma$, is in general linear position since $u_1, \dots, u_n$ are in general linear position and signs do not affect linear independence.
By
Lemma~\ref{lem:separation}(iii),
\begin{align*}
0 \notin \partial \Conv(\varepsilon_i\, u_{\sigma(i)} \;:\;
            i \in \mathrm{Rec}(\sigma) ).
\end{align*}
Hence the origin is either in the interior or in the complement; we treat the
two cases separately.

\smallskip
\emph{Case 1.}
If the origin does not belong to the convex hull of record vectors, we claim that $0 \notin H(g\lambda_R)$.  Indeed, the separating hyperplane theorem gives a unit vector $v\in \R^d$ with $\langle w, v
\rangle \ge c > 0$ for every record vector $w$. By
\eqref{eq:lacunaryest},
$$\langle s_m(g\lambda_R), v \rangle \ge
R^{\,\sigma(i(m))-1}(c - n/R) > 0$$
for all $m\in [n]$ once $R > n/c$, so
$0 \notin H(g\lambda_R)$.

\smallskip
\emph{Case 2.}
Suppose instead that the origin is in the interior of the record vector convex hull. We prove that $0\in \Int H(g\lambda_{R})$  for sufficiently large $R$.  Suppose to the contrary that
$0 \notin \Int H(g\lambda_{R_j})$ for all $j = 1,2,\ldots$, where  $(R_j)_{j=1}^\infty$ is a sequence such that
$R_j \to \infty$ as $j\to\infty$.  By the separating hyperplane theorem there exist unit vectors
$v_j\in \R^d$ with $\langle s_m(g\lambda_{R_j}), v_j \rangle \ge 0$ for all $m\in [n]$. By compactness, after passing to a subsequence we may assume that $v_j \to v$ for some unit vector $v$. Then by \eqref{eq:lacunaryest}
\begin{align*}
    \frac{1}{R_j^{\sigma(i(m))-1}}
    \langle s_m(g\lambda_{R_j}), v_j \rangle
\to
    \langle \varepsilon_{i(m)}
u_{\sigma(i(m))}, v \rangle.
\end{align*}
Since the left-hand side is nonnegative for all $j=1,2,\ldots$, passing to the limit gives
\begin{align*}
    \langle \varepsilon_{i(m)}
u_{\sigma(i(m))}, v \rangle
\ge 0.
\end{align*}
This contradicts the assumption that the origin is in the interior of the convex hull of record vectors. Hence there is no such sequence $R_j$ and $0 \in \Int H(g\lambda_R)$ for
all sufficiently large $R$.

\smallskip
It remains to verify that $\lambda_R$ is regular for large $R$. Fix $g=(\sigma,\varepsilon)$ and
$I=\{m_1<\dots<m_d\}\subseteq[n]$, and put $m_0:=0$. Successive
column subtractions give
\[
\det\big(s_{m_1}(g\lambda_R),\dots,s_{m_d}(g\lambda_R)\big)
=
\det\left(
 \sum_{i=m_{k-1}+1}^{m_k}
 \varepsilon_iR^{\sigma(i)-1}u_{\sigma(i)}
 :1\leq k\leq d
\right).
\]
In each block $\{m_{k-1}+1,\dots, m_k\}$, let $r_k$ be the unique position at
which $\sigma$ is maximal. The determinant on the right is a
polynomial in $R$ whose unique leading term has coefficient
\[
\left(\prod_{k=1}^d\varepsilon_{r_k}\right)
\det\big(u_{\sigma(r_1)},\dots,u_{\sigma(r_d)}\big)\neq0,
\]
by general linear position of $u_1,\dots, u_n$. It is therefore nonzero for all
sufficiently large $R$. Since there are only finitely many pairs
$(g,I)$, $\lambda_R$ is regular for all sufficiently large $R$.

\smallskip
Combining this observation with Cases~1 and~2 and using the finiteness
of the group $\mathfrak B_n$, we obtain a common threshold $R_0$ such that, for $R\geq R_0$,  the configuration $\lambda_R$ is
regular and the equivalence in~\eqref{eq:conv_hulls_RW_constant_equivalence_to_records} holds for every $g\in\mathfrak B_n$.
\end{proof}

\begin{corollary}
In the setting of Proposition~\ref{prop:lacunary}, for sufficiently large $R$ we have
$$
 |\{g\in \mathfrak{B}_n :   0 \notin H(g\lambda_R)\}| =
 2 \sum_{k=1}^n \left( \stirling{n}{k} 2^{n-k} \sum_{j=0}^{d-1} \binom{k-1}{j}\right),
$$
where $\stirling{n}{k}$ are the unsigned Stirling numbers of the first kind defined by the generating function $\sum_{k=0}^n \stirling{n}{k}\, t^k = t(t+1)\cdots(t+n-1)$.
\end{corollary}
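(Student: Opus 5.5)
By Proposition~\ref{prop:lacunary}, for $R\geq R_0$ and every $g=(\sigma,\varepsilon)\in\mathfrak B_n$ we have $0\notin H(g\lambda_R)$ if and only if $0\notin\Conv(\varepsilon_i u_{\sigma(i)}: i\in\mathrm{Rec}(\sigma))$. The plan is therefore to count pairs $(\sigma,\varepsilon)$ satisfying the right-hand condition, and to organize the count first by $\sigma$ and then by the number of its records.

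\emph{Step 1: fix $\sigma$ with $k=\mathrm{rec}(\sigma)$ records.} The condition depends only on the signs $\varepsilon_i$ at the record positions $i\in\mathrm{Rec}(\sigma)$. The remaining $n-k$ signs are free and contribute a factor $2^{n-k}$.

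\emph{Step 2: count the admissible record signs.} The $k$ record vectors $u_{\sigma(i)}$, $i\in\mathrm{Rec}(\sigma)$, form a subfamily of $u_1,\dots,u_n$, so they are in general linear position. Hence the $d\times k$ matrix with these columns is regular in the sense of Section~\ref{sec:wendel}. If $k\geq d+1$, Theorem~\ref{thm:wendel_comb} gives exactly $2\sum_{j=0}^{d-1}\binom{k-1}{j}$ sign vectors on the records for which the origin is not in the convex hull. If $k\leq d$, the vectors are linearly independent and Remark~\ref{rem:wendel_for_n_leq_d} gives the same formula, namely $2^k$, using the convention $\binom{k-1}{j}=0$ for $j>k-1$. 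So the count is uniform in $k\in\{1,\dots,n\}$; this uniformity is the one point that needs care.

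\emph{Step 3: sum over permutations.} Grouping permutations by their number of records, we use the classical fact that exactly $\stirling{n}{k}$ permutations of $[n]$ have $k$ records. If this should be justified, the quickest route is the insertion argument already used in Section~\ref{sec:Sparre-Andersen}: inserting the smallest value either in front (a new record) or in one of the $n-1$ later positions (no new record) yields the generating function $t(t+1)\cdots(t+n-1)$.

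Multiplying the counts from Steps~1 and~2 and summing over $k$ gives
\[
2\sum_{k=1}^n\stirling{n}{k}2^{n-k}\sum_{j=0}^{d-1}\binom{k-1}{j},
\]
as claimed. No step is a real obstacle. The only subtleties are the case $k\le d$, handled by the remark, and the requirement that $R$ exceed the common threshold $R_0$ from Proposition~\ref{prop:lacunary}, which that proposition already provides uniformly in $g$.
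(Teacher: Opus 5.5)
Your proposal is correct and follows the paper's proof essentially step for step. You reduce to record vectors via Proposition~\ref{prop:lacunary}, count the $2^{n-k}$ free non-record signs, apply Theorem~\ref{thm:wendel_comb} (or Remark~\ref{rem:wendel_for_n_leq_d} when $k\le d$) to the record signs, and group permutations by their number of records using $\stirling{n}{k}$. Your insertion justification of the record count goes slightly beyond what the paper spells out here, and it is valid.
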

\begin{proof}
Let $\mathrm{rec}(\sigma)= |\mathrm{Rec}(\sigma)|$ be the number of records of a permutation $\sigma$. By Proposition~\ref{prop:lacunary}
\begin{align*}
|\{g\in \mathfrak{B}_n :   0 \notin H(g\lambda_R)\}| = \sum_{\sigma} \sum_{\eps} \mathbb{1}_{\{0\notin \Conv (\eps_i u_{\sigma(i)}: i\in \mathrm{Rec}(\sigma) )\}}.
\end{align*}
Fix some permutation $\sigma$. The number of sign combinations $(\eps_i)_{i\in \mathrm{Rec}(\sigma)}$ for which $0\notin \Conv (\eps_i u_{\sigma(i)}: i\in \mathrm{Rec}(\sigma))$ is given by Wendel's formula,  Theorem~\ref{thm:wendel_comb}, in which the number of points $n$ is replaced by the number of records $\mathrm{rec}(\sigma)$; when $\mathrm{rec}(\sigma)\leq d$ use Remark~\ref{rem:wendel_for_n_leq_d}.  The signs $\eps_i$ with $i\notin \mathrm{Rec}(\sigma)$ are free. So,
$$
|\{g\in \mathfrak{B}_n :   0 \notin H(g\lambda_R)\}| = \sum_{\sigma} \left(2^{n-\mathrm{rec}(\sigma)} \cdot 2 \sum_{j=0}^{d-1} \binom{\mathrm{rec}(\sigma)-1}{j}\right).
$$
Now, for every $k\in [n]$, the number of permutations $\sigma$ having exactly $k$ records is given by the Stirling number $\stirling{n}{k}$. This gives the stated formula.
\end{proof}

\begin{lemma} \label{lem:constants_equivalents}
For integers $n,d\geq 1$ we have
$$
\sum_{\ell=1}^{n}
\stirling{n}{\ell}\,
2^{\,n-\ell}
\sum_{j=0}^{d-1}
\binom{\ell-1}{j}
=
\sum_{q\geq 0}
\stirlingb{n}{d-1-2q},
$$
where the numbers $\stirlingb{n}{k}$ with $n\in \{1,2,\ldots\}$, $k\in \mathbb Z$,  are defined by the generating function
$$
\sum_{k\in \mathbb Z} \stirlingb{n}{k} t^k
=
(t+1)(t+3)\cdots(t+2n-1).
$$
\end{lemma}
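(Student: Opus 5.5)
The plan is to compare generating functions in an auxiliary variable that tracks the upper limit $d$. The inner sum $\sum_{j=0}^{d-1}\binom{\ell-1}{j}$ is a partial sum of the coefficients of $(1+t)^{\ell-1}$. The right-hand side is a sum of coefficients of $(t+1)(t+3)\cdots(t+2n-1)$ taken with a parity restriction. Both will therefore be read off from a single polynomial identity.

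The first step is to compute, for a formal variable $t$,
\[
P_n(t):=\sum_{\ell=1}^n \stirling{n}{\ell}2^{n-\ell}(1+t)^{\ell-1}
=\frac{2^n}{1+t}\prod_{j=0}^{n-1}\Bigl(\frac{1+t}{2}+j\Bigr)
=\frac{1}{1+t}\prod_{j=0}^{n-1}(t+1+2j).
\]
This uses only the generating function of the unsigned Stirling numbers, evaluated at $(1+t)/2$. Writing $Q_n(t):=(t+1)(t+3)\cdots(t+2n-1)=\sum_k\stirlingb{n}{k}t^k$, we get $P_n(t)=Q_n(t)/(1+t)$. This is a polynomial because the factor $j=0$ equals $t+1$. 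Explicitly, $P_n(t)=(t+3)\cdots(t+2n-1)$.

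Next, the left-hand side of the lemma equals $\sum_{j=0}^{d-1}[t^j]P_n(t)$, the sum of the first $d$ coefficients of $P_n$. Write $p_j:=[t^j]P_n$ and $b_k:=\stirlingb{n}{k}$. The relation $Q_n=(1+t)P_n$ gives $b_k=p_k+p_{k-1}$, with $p_{-1}=0$.

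The final step is to telescope:
\[
\sum_{q\geq0}b_{d-1-2q}=(p_{d-1}+p_{d-2})+(p_{d-3}+p_{d-4})+\cdots=\sum_{j=0}^{d-1}p_j,
\]
because the pairs $(d-1-2q,\,d-2-2q)$ exhaust all indices $0,\dots,d-1$ exactly once and terms with negative index vanish. This proves the identity.

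The main point needing care is this index bookkeeping: the alternating-parity sum on the right collapses to the full partial sum on the left precisely because of the factor $(1+t)$. I also need to check the edge cases, where $d-1$ may exceed $n$ and the conventions $\stirlingb{n}{k}=0$ and $\binom{\ell-1}{j}=0$ apply. With those conventions the argument holds uniformly for all $n,d\ge1$.
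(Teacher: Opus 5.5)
Your proof is correct and follows essentially the same route as the paper: both evaluate the Stirling generating function at $(1+t)/2$ to write the left-hand side as $\sum_{j=0}^{d-1}[t^j]\,Q_n(t)/(1+t)$, where $Q_n(t)=(t+1)(t+3)\cdots(t+2n-1)$. The only difference is in the final coefficient bookkeeping, and it is cosmetic: the paper uses $[t^j]\,t^r/(1+t)=(-1)^{j-r}$ and sums the alternating signs, while you use $\stirlingb{n}{k}=p_k+p_{k-1}$ and telescope in pairs, which is the same computation.
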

\begin{proof}
Using $\binom{\ell-1}{j}
=[t^j](1+t)^{\ell-1}
=[t^j]\frac{(1+t)^\ell}{1+t}$ and interchanging the order of summation,
\begin{align*}
S(n,d) := \sum_{\ell=1}^{n}
\stirling{n}{\ell}\,
2^{\,n-\ell}
\sum_{j=0}^{d-1}
\binom{\ell-1}{j}
&=
\sum_{j=0}^{d-1}
[t^j]
\sum_{\ell=1}^{n}
\stirling{n}{\ell}\,
2^{\,n-\ell}
\frac{(1+t)^\ell}{1+t}.
\end{align*}
Using the generating functions of $\stirling{n}{k}$ and $\stirlingb{n}{k}$, we obtain
\[
\sum_{\ell=0}^n \stirling{n}{\ell}2^{n-\ell}(1+t)^\ell
=(t+1)(t+3)\cdots(t+2n-1)
=\sum_{r=0}^n \stirlingb{n}{r}t^r.
\]
Hence, by $[t^j] \frac{t^r}{1+t} = (-1)^{j-r}$ for $j\geq r$ (the coefficient equals $0$ for $j<r$), we have
\[
S(n,d)
=\sum_{j=0}^{d-1}[t^j]\frac{\sum_{r=0}^n \stirlingb{n}{r}t^r}{1+t}
=\sum_{r=0}^{d-1}\stirlingb{n}{r}\sum_{j=r}^{d-1}(-1)^{j-r}
.
\]
The inner sum equals $1$ if  $d$ and $r$ have different parity, and $0$ otherwise. This gives the stated identity.
\end{proof}

This completes the proof of Theorem~\ref{thm:convex_hull_random_walk_comb}. Its probabilistic version stated in Theorem~\ref{thm:1} can be deduced by averaging over the group $\mathfrak{B}_n$.

\medskip
\emph{Characterization of the exceptional set.}
As in Section~\ref{sec:wendel}, the exceptional set entered the proof mainly
through one property: outside $\mathcal D$ the origin lies on the boundary of no
$H(gX)$. Excluding all of $\mathcal D$ may look wasteful, since the partial sums
involved in a linear dependence may lie deep inside the corresponding polytope.
The next proposition shows that nothing is gained by excluding less: the freedom
in the choice of $g$ turns any linear dependence into a boundary degeneracy,
though possibly for a different signed permutation.

\begin{proposition}[Exceptional set for convex hulls of random walks]\label{prop:walk_exceptional_set}
For every $X\in\R^{d\times n}$,
\[
    X\in\mathcal D
    \quad\Longleftrightarrow\quad
    0\in\partial H(gX)
    \text{ for some }g\in\mathfrak{B}_n.
\]
\end{proposition}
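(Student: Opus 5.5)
The reverse implication is immediate. If $X\notin\mathcal D$, then for every $g\in\mathfrak B_n$ the partial sums $s_1(gX),\dots,s_n(gX)$ are in general linear position. By Lemma~\ref{lem:separation}(iii) the origin is then either outside $H(gX)$ or in its interior, so $0\notin\partial H(gX)$ for every $g$. The real work is the forward implication. There I plan to start from a linear dependence and build, as in Proposition~\ref{prop:wendel_exceptional_set}, a signed permutation $g'$ together with a supporting hyperplane $v^\perp$ through the origin such that $0\in H(g'X)$ and $H(g'X)\subseteq\{z:\langle v,z\rangle\ge0\}$.

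\emph{Reduction to block sums.} Suppose $X\in D_{g,I}$ with $I=\{i_1<\dots<i_d\}$, and put $i_0:=0$. Replacing $X$ by $gX$, we may assume $g=\mathrm{id}$. Consider the block sums
\[
y_k:=s_{i_k}(X)-s_{i_{k-1}}(X),\qquad 1\le k\le d .
\]
They are related to $s_{i_1},\dots,s_{i_d}$ by a unitriangular transformation, so they are linearly dependent. Hence there are $a\neq0$ with $\sum_k a_k y_k=0$, and $\dim\operatorname{span}(y_1,\dots,y_d)\le d-1$, which gives a unit vector $v$ orthogonal to all $y_k$.

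\emph{Choosing the order and signs of the blocks.} I first permute the blocks as wholes and multiply all entries of some blocks by $-1$; this is an element of $\mathfrak B_n$. The order is by decreasing $|a_k|$, and the sign of block $k$ is $\delta_k=\sgn a_k$ (any sign if $a_k=0$). The new boundary partial sums are
\[
z_j=\sum_{k\le j}\delta_k y_{\pi(k)}, \qquad 1\le j\le d .
\]
Set $b_k:=|a_{\pi(k)}|$, which is nonnegative and nonincreasing, and $c_j:=b_j-b_{j+1}\ge0$ with $b_{d+1}:=0$. Abel summation gives
\[
\sum_j c_j z_j=\sum_k b_k\delta_k y_{\pi(k)}=\sum_k a_k y_k=0,
\]
with $c\ne0$ because $\sum_j c_j=b_1>0$. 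Thus $0\in\Conv(z_1,\dots,z_d)$, and all the $z_j$ lie in $v^\perp$.

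\emph{Making all other partial sums lie on one side.} This is the step I expect to be the main obstacle, since the interior partial sums of each block may have either sign against $v$. Inside each (reordered, signed) block $k$, the heights $\langle v,\cdot\rangle$ of the partial sums relative to the block's starting point form a sequence starting and ending at $0$, because $\langle v,y\rangle=0$. I apply a cyclic rotation of the entries of the block, starting right after a position where the relative height is minimal. By the cycle lemma, all relative heights become $\ge0$, while the block sum, and hence every boundary sum $z_j$, is unchanged. For the tail block after $i_d$, whose starting height is $0$, I flip the sign of each entry $x$ so that $\langle v,\pm x\rangle\ge0$. All these operations compose to some $g'\in\mathfrak B_n$. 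For it, every partial sum satisfies $\langle v,s_m(g'X)\rangle\ge0$, and $0\in\Conv(z_1,\dots,z_d)\subseteq H(g'X)$. So $v^\perp$ is a supporting hyperplane of $H(g'X)$ containing $0$, and therefore $0\in\partial H(g'X)$.
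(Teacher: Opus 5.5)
Your proof is correct and follows the paper's argument essentially step for step: you pass to the block sums, reorder and re-sign whole blocks so that the relation's coefficients become nonnegative and nonincreasing, use summation by parts to get $0\in\Conv(z_1,\dots,z_d)$, and then rotate each block cyclically at a minimum of the $v$-heights and sign the tail increments so that every partial sum lies in $\{z:\langle v,z\rangle\ge 0\}$. The only blemish is notational: in your formula for $z_j$ the sign of the block in new position $k$ should be written $\delta_{\pi(k)}=\sgn a_{\pi(k)}$, since this is what gives $b_k\delta_{\pi(k)}y_{\pi(k)}=a_{\pi(k)}y_{\pi(k)}$ in the Abel summation.
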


\begin{proof}
Suppose first that $X\in\mathcal D$. Choose $g\in\mathfrak{B}_n$ and
$I=\{i_1<\dots<i_d\}\subseteq[n]$ with $X\in D_{g,I}$, write
$gX=(y_1\,|\,\dots\,|\,y_n)$ and put $i_0:=0$. Split the increments
$y_1,\dots,y_{i_d}$ into consecutive blocks
$B_k:=\{i_{k-1}+1,\dots,i_k\}$ with sums $z_k:=\sum_{j\in B_k}y_j$,
$1\leq k\leq d$. Since $s_{i_k}(gX)=z_1+\dots+z_k$, the vectors
$z_1,\dots,z_d$ are linearly dependent. Hence there is $v\neq0$
orthogonal to all of them, and we may fix a nontrivial relation
$\sum_{k=1}^d c_kz_k=0$. Permuting the blocks and multiplying all
increments of a block by $-1$ amounts to replacing $g$ by another element
of $\mathfrak{B}_n$; doing so and rescaling the relation, we may assume
that $1=c_1\geq\dots\geq c_d\geq0$. Putting $p_k:=z_1+\dots+z_k=s_{i_k}(gX)$,
$p_0:=0$ and $c_{d+1}:=0$, summation by parts gives
\[
    0=\sum_{k=1}^d c_kz_k=\sum_{k=1}^d(c_k-c_{k+1})\,p_k,
\]
where the coefficients $c_k-c_{k+1}$ are nonnegative and sum  to
$c_1=1$. Hence $0\in\Conv(p_1,\dots,p_d)=\Conv(s_{i_1}(gX),\dots,s_{i_d}(gX))$.

It remains to reorder the increments inside the blocks. The goal is to move all partial sums into the half-space $\{z:\langle v,z\rangle\geq0\}$.
Since
$\langle v,z_k\rangle=0$, the numbers $\langle v,y_j\rangle$, $j\in B_k$,
sum to zero, so shifting the block cyclically to start right after a
position where their partial sums attain the minimum, we achieve that all
partial sums inside the block have nonnegative inner product with $v$.
Such a shift does not change $z_k$, and hence does not change
$p_1,\dots,p_d$. Finally, we append the increments $y_j$ with $j>i_d$,
choosing their signs so that their inner products with $v$ are
nonnegative. The resulting signed permutation $\widetilde g$ has the
property that every partial sum of $\widetilde gX$ is of the form
$p_k+w$ with $0\leq k\leq d$ and $\langle v,w\rangle\geq0$; since
$\langle v,p_k\rangle=0$, all vertices of $H(\widetilde gX)$ lie in the
closed half-space $\{z:\langle v,z\rangle\geq0\}$. On the other hand,
$p_1,\dots,p_d$ occur among these partial sums, so
$0\in\Conv(p_1,\dots,p_d)\subseteq H(\widetilde gX)$. Hence $v^{\perp}$
is a supporting hyperplane of $H(\widetilde gX)$ containing $0$, which
implies $0\in\partial H(\widetilde gX)$.

Conversely, suppose that $X\notin\mathcal D$. Then, for every
$g\in\mathfrak B_n$, the partial sums
$s_1(gX),\dots,s_n(gX)$ are in general linear position. Hence, by
Lemma~\ref{lem:separation}(iii), the origin is either outside $H(gX)$
or belongs to its interior. Thus
$0\notin\partial H(gX)$ for every $g\in\mathfrak B_n$, which proves
the converse by contraposition.
\end{proof}

\section{Absorption by convex hulls of  random bridges} \label{sec:bridge}

In this short section, we indicate how the method developed above can be adapted to random bridges. Since the argument closely parallels the random-walk case, we focus on the necessary modifications and omit details that are entirely analogous. The following theorem was proved in~\cite{KVZ17_GAFA}.

\begin{theorem}[Absorption probability for a random bridge]
\label{thm:bridge_absorption_for_iid}
Let $d\geq 1$,  $n\geq d+2$ and let $\xi_1,\dots,\xi_n$ be (in general, dependent) random vectors in $\R^d$ with partial sums
$$
S_i = \xi_1 + \dots + \xi_i,\quad  1\leq i\leq n,\quad  S_0=0.
$$
We impose the following assumptions on the increments $\xi_1,\dots,\xi_n$:
\begin{itemize}
\item[(i)] \emph{Bridge property:} $S_n=\xi_1+\dots+\xi_n = 0$ a.s.
\item[(ii)] \emph{Exchangeability:} For every permutation $\sigma$ of the set $\{1,\dots,n\}$, we have the distributional equality
$$
(\xi_{\sigma(1)},\dots,  \xi_{\sigma(n)}) \stackrel{d}{=} (\xi_1,\dots,\xi_n).
$$
\item[(iii)] \emph{General position of partial sums:}
For every $1\leq i_1 < \dots < i_d \leq n-1$, the probability that the vectors $S_{i_1}, \dots, S_{i_d}$ are linearly dependent is $0$.
\end{itemize}
Then
\begin{align}
    \pbridge{n}{d}
    &:=
    \pr[0 \in \Conv(S_1,\dots,S_{n-1})]
    =
    \frac{2}{n!}
    \sum_{r\geq 0}
    \stirling{n}{d+2+2r},
    \label{eq:def-p-br}
    \\
    \qbridge{n}{d}
    &:=
    \pr[0 \notin \Conv(S_1,\dots,S_{n-1})]
    =
    \frac{2}{n!}
    \sum_{r\geq 0}
    \stirling{n}{d-2r}.
    \label{eq:def-q-br}
\end{align}
Here, $\stirling{n}{k}$ with $n\in \{1,2,\dots\}$ and $k\in \mathbb Z$ denotes the unsigned Stirling number of the
first kind defined by the generating function
\begin{equation}\label{eq:stirling_generating_func}
\sum_{k\in \mathbb Z} \stirling nk x^k = x(x+1)(x+2)\dots (x+n-1).
\end{equation}
In particular, $\stirling nk= 0$ for $n\in \{1,2,\dots\}$ and $k\notin\{0,\dots,n\}$, so that the sums appearing above terminate after finitely many nonzero terms.
\end{theorem}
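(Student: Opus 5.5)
We follow the same two-part scheme as in Section~\ref{sec:convex_hulls_random_walks}, now for the symmetric group $\mathfrak S_n$ acting on bridge configurations. The configuration space is $\mathcal X:=\{X=(x_1|\dots|x_n)\in\R^{d\times n}: x_1+\dots+x_n=0\}$, a linear space of dimension $d(n-1)$. For $X\in\mathcal X$ put $H(X):=\Conv(s_1(X),\dots,s_{n-1}(X))$ and $\mathcal M(X):=\{\sigma\in\mathfrak S_n: 0\notin H(\sigma X)\}$, where $\sigma$ permutes the columns. The walls are the sets $D_{\sigma,I}$ ($I\subseteq[n-1]$, $|I|=d$) on which $s_i(\sigma X)$, $i\in I$, are linearly dependent, and $X$ is regular if it lies on none of them. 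The combinatorial statement to prove is
\[
|\mathcal M(X)|=2\sum_{r\ge0}\stirling{n}{d-2r}
\]
for all regular $X$. The probabilistic version then follows by averaging over $\mathfrak S_n$, using exchangeability and the fact that $\mu$-almost surely $X$ is regular.

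\emph{Constancy.} Path existence in $\mathcal X$ avoiding double degeneracies is obtained as in Proposition~\ref{prop:pathexists_general}. The partial-sum matrix is $XB_\sigma$ restricted to a linear subspace, so the walls are still hypersurfaces in $\mathcal X$ and intersections of distinct walls have codimension $\ge2$. The new point is the involution. The bridge is cyclically closed ($s_0=s_n=0$), and sign flips are not available in $\mathfrak S_n$. So instead of the block-reversal-with-sign-flip $\rho_I$ we use block reversal combined with a cyclic rearrangement.

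For $I=\{i_1<\dots<i_d\}$, let the blocks be $(x_{i_k+1},\dots,x_{i_{k+1}})$ for $k=0,\dots,d$, with $i_0=0$ and $i_{d+1}=n$; the last block and the first one together form a single cyclic block. Define $\rho_I$ to reverse the cyclic order of the blocks while keeping the order inside each block. Concretely, the new sequence is: block $d$ (the wrap-around block, i.e. last block followed by the first), then block $d-1$, \dots, then block $1$, with the starting point placed at the end of the wrap block so that the cut points stay at positions $i_1,\dots,i_d$ up to relabeling. The telescoping identity to check is that the partial sums at the cut points become $-s_{i_k}$, and that partial sums inside a block become $s_j-s_{i_k}-s_{i_{k+1}}$-type expressions. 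Then \eqref{eq:normal_nu_scalar_prod_walks} transfers exactly as before: inner products with $\nu$ vanish at the cut points and stay positive elsewhere, because $\langle\nu,s_{i_k}\rangle=0$.

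It is not obvious that the positions of the cut set are preserved; they may be mapped to a reflected set such as $\{n-i_d,\dots,n-i_1\}$. I would allow $I(\iota(\sigma))$ to differ from $I(\sigma)$ and only require $\iota$ to be an involution of $\mathcal M_\partial$. This is enough, since the sign-reversal argument via $\psi$ works for any pair of $d$-sets with the same geometric face up to the sign of its normal. With the involution in place, the argument \eqref{eq:membership}--\eqref{eq:twoindicators} carries over verbatim.

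\emph{The constant.} I choose a lacunary bridge configuration. Take generic unit vectors $u_1,\dots,u_{n-1}$ and set $x_k=R^{k-1}u_k$ for $k\le n-1$, with $x_n=-\sum_{k<n}x_k$, which is the dominant increment. Using cyclic invariance, rotate the permutation so that $x_n$ comes last. By the mechanism of Proposition~\ref{prop:lacunary}, membership of $0$ then reduces to $0\in\Conv(u_{\sigma(i)}: i \text{ a record among the first } n-1 \text{ positions})$, but now without signs.

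Wendel's sign-averaging is no longer available here, so this step needs a different counting. One option is to choose the $u_k$ on a moment-type curve, where the counting becomes a sign-change count. Alternatively, one can use cycle structure: under the fundamental bijection, records of the rotated permutation correspond to cycles. The target is $\frac{2}{n!}\sum_r\stirling n{d-2r}$ for the non-absorption count. This suggests that the number of records $k$ contributes a parity-type indicator $\mathbb 1\{k\le d,\ k\equiv d \bmod 2\}$, doubled, which is reminiscent of Wendel-type counts in which non-absorption depends on parity.

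\emph{Main obstacle.} I expect this evaluation to be the hardest step, together with verifying that the cyclic involution genuinely maps $\mathcal M_\partial$ to itself. If the direct lacunary evaluation fails, the fallback is to prove the recurrence $\qbridge{n}{d}$ in $n$ via the insertion argument announced for Section~\ref{sec:insertion} and match it against the Stirling recurrence.
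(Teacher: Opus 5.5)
Your overall scheme (wall crossing on bridge configurations, a lacunary evaluation, then averaging over $\mathfrak S_n$) is the paper's, and your path-existence and averaging steps are fine. However, your involution is wrong as described, and, more seriously, your evaluation of the constant is missing its key idea.

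\emph{The involution.} Write $B_k=(x_{i_k+1},\dots,x_{i_{k+1}})$, $0\le k\le d$, with sums $z_k$, and let $F=\Conv(s_{i_1},\dots,s_{i_d})$. Keeping $B_d$ glued to $B_0$ as one wrap-around block is what breaks your map.

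For $d=2$, reversing the cyclic order of the two blocks $[B_2B_0],B_1$ changes nothing. The nontrivial rotations $B_1B_2B_0$ and $B_2B_0B_1$ have cut values $\{s_{i_2}-s_{i_1},-s_{i_1}\}$ and $\{-s_{i_2},s_{i_1}-s_{i_2}\}$. Since $0\in\relint F$ forces $s_{i_2}=-c\,s_{i_1}$ with $c>0$, neither pair has the origin in its convex hull, so no partner in $\mathcal M_\partial$ arises. For $d\ge 3$, cutting at the junction gives $B_0B_{d-1}\cdots B_1B_d$. Its cut values form $s_{i_1}+s_{i_d}-F$ rather than $-F$, and this need not contain $0$ in its relative interior (take $d=3$ with $0$ the barycentre of $F$).

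The point you are missing is that $s_0=s_n=0$ also lies on $\nu^\perp$. So $0\equiv n$ is itself a cut, and all $d+1$ blocks must stay separate. The correct map is the plain reversal $B_0B_1\cdots B_d\mapsto B_dB_{d-1}\cdots B_0$, which splits your wrap block. For every bridge configuration, hence for all $t$:
the partial sum at position $n-i_j$ is $z_d+\dots+z_j=-s_{i_j}$;
the partial sum after the $j$-th step of $B_k$ is $s_{i_k+j}-s_{i_k}-s_{i_{k+1}}$, which has positive $\nu$-component;
the cut set becomes $\{n-i_d<\dots<n-i_1\}$, as you anticipated.
Reversing once more restores $\sigma$, so this is an involution of $\mathcal M_\partial$ with $\psi_{\iota(\sigma)}=-\psi_\sigma$.

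\emph{The constant.} ``Using cyclic invariance, rotate so that $x_n$ comes last'' is not legitimate. Rotating the increments by $p$ translates the closed polygon by $-s_p$, which replaces the event $0\in\Conv(s_1,\dots,s_{n-1})$ by $s_p\in\Conv(s_j:j\neq p)$. So absorption is not constant on rotation classes: for regular $X$, the number of non-absorbing rotations in a class equals the number of vertices of the hull of the closed polygon. Your reduction to unsigned records is therefore valid only for the $(n-1)!$ permutations with $\sigma(n)=n$. Moreover, unsigned record vectors give no configuration-free count, which is why you end up guessing a parity pattern.

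The missing idea is to split at $b:=\sigma^{-1}(n)$ instead of rotating. For $m<b$, $s_m=R^{\mu-1}(u_\mu+O(R^{-1}))$ with $\mu$ the running maximum. For $m\ge b$, write $s_m=-\sum_{i>m}x_{\sigma(i)}$, which does not involve $x_n$; hence $s_m=-R^{\kappa-1}(u_\kappa+O(R^{-1}))$ with $\kappa$ the largest entry still to come. By Cases~1 and~2 of the walk argument, absorption is then equivalent to $0$ lying in the convex hull of the two-sided record vectors. These carry sign $+$ for left-to-right maxima before $b$ and sign $-$ for right-to-left maxima after $b$.

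So, contrary to your remark, Wendel's sign averaging is available. Build $\sigma$ by inserting $n,n-1,\dots,1$ into uniformly chosen slots. A value is a two-sided record iff it lands in one of the two extreme slots, and its sign is fixed by the side, i.e.\ it is an independent fair coin. Hence $\pbridge{n}{d}=\mathbb E[\pwend{K}{d}]$, where $K$ is a sum of independent $\mathrm{Bern}(2/m)$ variables, $2\le m\le n$, so that $\pr(K=k)=2^k\stirling{n-1}{k}/n!$. The same generating-function computation as in the walk case then yields the formula.

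Your fallback could also close the argument in principle. The expression $2\sum_r\stirling{n}{d+2+2r}$ does satisfy $N_{n,d}=(n-1)N_{n-1,d}+N_{n-1,d-1}$ by the Stirling recurrence, with boundary values $N_{n,0}=n!$ and $N_{n,n-1}=0$. But the substance of that route is the bridge insertion lemma (a small step inserted at the first or last slot and compensated elsewhere), and your proposal does not supply it.
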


\smallskip
\emph{The configuration space, the group, and the walls.}
A bridge is determined by all of its increments except the
last one, so we take $\R^{d\times(n-1)}$ as the configuration space and attach to
$X=(x_1\,|\,\dots\,|\,x_{n-1})$ the bridge increments
$$
\widehat X:=\big(x_1\,|\,\dots\,|\,x_{n-1}\,|\,-(x_1+\dots+x_{n-1})\big),
$$
which is a linear bijection onto $\{Z\in\R^{d\times n}\mid s_n(Z)=0\}$. We keep
$s_k(X):=x_1+\dots+x_k$ for $1\leq k\leq n-1$. Put $s_0(X)=s_n(X)=0$.  In the bridge setting, define
$$
H(X):=\Conv(s_1(X),\dots,s_{n-1}(X)).
$$

The group is the symmetric group $\mathfrak{S}_n$.  It acts on $\R^{d\times(n-1)}$ by
restoring the last increment, permuting the columns, and deleting the last column
again. The walls are
$$
D_{\sigma,I}:=\big\{X \in \mathbb R^{d \times (n-1)} \mid \text{the vectors } s_i(\sigma X),\ i\in I,\ \text{are linearly dependent}\big\},
$$
where $\sigma \in \mathfrak{S}_n$,   $I\subseteq[n-1]$, $|I|=d$.
 We call
$X$ \emph{regular} if it lies on no wall. It suffices to prove the following result; averaging over $\mathfrak{S}_n$ turns it into Theorem~\ref{thm:bridge_absorption_for_iid}.

\begin{theorem}[Absorption for bridges: Combinatorial version]\label{thm:bridge_comb}
Let $d\geq 1$ and  $n\geq d+2$.
For every regular $X\in\R^{d\times(n-1)}$,
$$
\big|\{\sigma\in\mathfrak{S}_n \mid 0\notin H(\sigma X)\}\big|
=2\sum_{r\geq0}\stirling{n}{d-2r}.
$$
\end{theorem}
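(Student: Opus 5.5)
We follow the scheme of Section~\ref{sec:convex_hulls_random_walks}: first show that the count is constant on regular configurations via wall crossing, then evaluate it at one lacunary configuration.

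\emph{Constancy.} Paths connecting two regular configurations and crossing one wall at a time come from Proposition~\ref{prop:pathexists_general}. The bridge partial sums of $\sigma X$ have the form $XB_\sigma$, with $B_\sigma$ the $(n-1)\times(n-1)$ matrix of the linear map, so the proposition applies. Lemmas~\ref{lem:stability} and~\ref{lem:separation} again reduce the claim to the elements $\sigma\in\mathcal M_\partial$ at a crossing time $t=0$. For such $\sigma$ let $I(\sigma)=\{i_1<\dots<i_d\}\subseteq[n-1]$ be the unique degenerate $d$-set, and let $\nu$ be the normal supplied by Lemma~\ref{lem:separation}(iv).

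The new ingredient is the involution. Since there are no sign flips in $\mathfrak S_n$, we cannot reverse and negate blocks. Instead:
\begin{itemize}
\item Cut the full increment sequence $\widehat{\sigma X}$ into the $d+1$ consecutive blocks $(i_{k-1},i_k]$, $k=1,\dots,d+1$, with $i_0=0$ and $i_{d+1}=n$.
\item Let the block sums be $z_1,\dots,z_{d+1}$; they satisfy $\sum_k z_k=0$, and $p_k=z_1+\dots+z_k=s_{i_k}$.
\item Define $\iota(\sigma)$ by \emph{reversing the order of the blocks} while keeping the internal order of each block.
\end{itemize}
The block-boundary partial sums then become $q_k=z_{d+1}+\dots+z_{d+2-k}=-p_{d+1-k}$. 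So the vertex simplex $F$ is replaced by $-F$, with the new index set $I'=\{n-i_d<\dots<n-i_1\}$. Every other partial sum has the form (block vertex)$+w$, where $w$ is a partial sum inside one block. Such a $w$ is unchanged, and at $t=0$ it satisfies $\langle\nu,w\rangle>0$, because $\langle\nu,p_k\rangle=0$. Hence:
\begin{itemize}
\item $\iota(\sigma)\in\mathcal M_\partial$ with the same normal $\nu$;
\item $\iota$ is an involution, since reversing the block order twice gives the identity and $I(\iota\sigma)=I'$ maps back to $I$;
\item the points indexed by $I'$ are, for all $t$, the negatives of those indexed by $I$.
\end{itemize}
The last point gives $\psi_{\iota(\sigma)}(t)=-\psi_\sigma(t)$. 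The barycentric argument behind~\eqref{eq:membership} then carries over verbatim and yields the analogue of~\eqref{eq:twoindicators}.

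\emph{Constant.} First we reformulate the target. Since $t(t+1)\cdots(t+n-1)/(1+t)=t(t+2)\cdots(t+n-1)$, the argument of Lemma~\ref{lem:constants_equivalents} gives
\[
2\sum_{r\ge0}\stirling{n}{d-2r}=\sum_{k}\stirling{n-1}{k}\cdot 2\sum_{j=0}^{d-1}\binom{k-1}{j}.
\]
This is Wendel's count (Theorem~\ref{thm:wendel_comb} together with Remark~\ref{rem:wendel_for_n_leq_d}) for $k$ points, summed over $\sigma'\in\mathfrak S_{n-1}$ with $k$ records. It matches $|\mathfrak S_n|$, because $\sum_k\stirling{n-1}{k}2^k=2\cdot3\cdots n=n!$. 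So the aim is a regular configuration together with a bijection
\[
\mathfrak S_n\;\longleftrightarrow\;\{(\sigma',\eps):\ \sigma'\in\mathfrak S_{n-1},\ \eps\in\{\pm1\}^{\mathrm{Rec}(\sigma')}\},
\]
under which $0\in H(\sigma X)$ holds iff $0\in\Conv(\eps_i u_{\sigma'(i)}:i\in\mathrm{Rec}(\sigma'))$.

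I would try the lacunary bridge $x_k=R^{k-1}u_k$ for $k\le n-1$, with $x_n=-\sum_{k<n} x_k\approx -R^{n-2}u_{n-1}$. The increments $x_{n-1}$ and $x_n$ nearly cancel, and these two "large" increments play the role of a sign. Concretely:
\begin{itemize}
\item Between the positions of $n-1$ and $n$ in $\sigma$, the partial sums are dominated by $\pm R^{n-2}u_{n-1}$.
\item Elsewhere they are dominated by running maxima among the small increments, read cyclically.
\end{itemize}
Using the translation/cyclic-shift structure of bridges, I expect the following. After deleting the letters $n-1,n$ and recording their relative placement, $\sigma$ becomes a permutation $\sigma'$ of the remaining small letters, plus a sign attached to each record, namely whether that record occurs inside the window between $n-1$ and $n$ (contributing with sign $-$) or outside it. The proof that the equivalence holds for large $R$ would copy Cases~1 and~2 of Proposition~\ref{prop:lacunary}, and regularity would follow from the leading-term determinant argument.

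The main obstacle is this last combinatorial step. One must pin down the exact bijection and verify, via the dominance estimate~\eqref{eq:lacunaryest}, that the hull of the bridge contains $0$ precisely when the signed record vectors do. If the near-cancelling pair proves awkward, my fallback is to take the largest bridge increment as $R^{n-1}$ times a generic vector and induct on $n$, using an insertion argument as in the recurrence proof of Sparre Andersen's formula.
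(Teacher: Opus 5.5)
Your constancy argument is correct and is the paper's argument. It uses the same involution: reverse the order of the $d+1$ blocks, keep each block internally intact, replace $I$ by $\{n-i_d<\dots<n-i_1\}$, keep the normal $\nu$, and obtain $\psi_{\iota(\sigma)}=-\psi_\sigma$. Your rewriting of the target as $\sum_k\stirling{n-1}{k}\cdot 2\sum_{j=0}^{d-1}\binom{k-1}{j}$ is also right, and the paper reaches the same expression.

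The gap is in evaluating the constant. You pick the right configuration but never identify which vectors govern $H(\sigma X)$, and your sketch is wrong as stated:
\begin{itemize}
\item Deleting $n-1$ and $n$ leaves a permutation of $n-2$ letters, not an element of $\mathfrak S_{n-1}$.
\item Entries strictly between $n-1$ and $n$ never dominate a partial sum: on that whole window $s_m=\pm R^{n-2}(u_{n-1}+O(R^{-1}))$. So they cannot be the records carrying the sign $-$.
\item After the window, the partial sums are not governed by running maxima of a cyclic reading. Cyclic shifts translate the bridge and do not preserve the event $0\in H$.
\end{itemize}
As a result, you have no argument that the signs of the dominant vectors are equidistributed, and that is exactly what lets Theorem~\ref{thm:wendel_comb} enter.

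The missing step is short. Let $b=\sigma^{-1}(n)$. For $m\geq b$, write $s_m=-\sum_{i>m}x_{\sigma(i)}$. This sum does not contain $x_n$, so it is lacunary and dominated by $-R^{\nu-1}u_\nu$ with $\nu=\max\{\sigma(m+1),\dots,\sigma(n)\}$. Combine this with the usual estimate for $m<b$ and with Cases~1--2 of Proposition~\ref{prop:lacunary}. The result is that $0\in H(\sigma X)$ if and only if $0$ lies in the convex hull of $+u_{\sigma(i)}$ over the left-to-right maxima before $n$ and $-u_{\sigma(i)}$ over the right-to-left maxima after $n$. Your near-cancellation picture is just the case of the value $n-1$, which always contributes $\pm u_{n-1}$ according to its side of $n$.

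To count, build $\sigma$ by inserting $n,n-1,\dots,1$ into slots. A value is such a two-sided record if and only if it lands in one of the two extreme slots, and the side fixes its sign. Hence:
\begin{itemize}
\item For each set $S$ of record values, every sign pattern on $S$ is realized by equally many $\sigma$, so Wendel's count applies to $(u_v)_{v\in S}$.
\item Exactly $2^k\stirling{n-1}{k}$ permutations have $k$ two-sided records.
\end{itemize}
Together these give your target. Matching this insertion step by step with the insertion of $n-1,\dots,1$ that builds $\sigma'\in\mathfrak S_{n-1}$ (leftmost slot with sign $\pm$ versus the two extreme slots) also produces the bijection you were looking for.

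Your fallback is only named, and as phrased it cannot work. The increments of a bridge sum to zero, so no single increment can dominate all the others.
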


\smallskip
\emph{Crossing invariance.} Proposition~\ref{prop:pathexists_general} applies with $n$ replaced by $n-1$.
We can follow the argument of Section~\ref{sec:convex_hulls_random_walks}; only the construction of the involution in the proof of Proposition~\ref{prop:crossing} has to be
replaced. Let $\gamma : [-1,1] \to \R^{d\times (n-1)}$ be a piecewise linear path such that $\gamma(t)$ is regular for all $t \neq 0$. We consider a crossing furnished by
Proposition~\ref{prop:pathexists_general}, so that, for every
$\sigma\in\mathfrak S_n$, there is at most one $d$-element set $I$
with $\gamma(0)\in D_{\sigma,I}$. Let
$$
\mathcal M_\partial:=\{\sigma \in \mathfrak{S}_n \mid \, 0 \in \partial H(\sigma \gamma(0))\}.
$$
For $\sigma\in\mathcal M_\partial$ let $I(\sigma)=\{i_1<\dots<i_d\}\subseteq[n-1]$
index the vertices of the simplicial facet $F$ whose relative interior contains the
origin, and let $\nu$ be its unit normal, so that $\langle\nu,s_i(\sigma\gamma(0))\rangle=0$ for
$i\in I(\sigma)$ and $\langle\nu,s_i(\sigma\gamma(0))\rangle>0$ for the remaining $i\in[n-1]$; because
$s_0(\sigma\gamma(0))=s_n(\sigma\gamma(0))=0$, the first relation holds for $i_0:=0$ and $i_{d+1}:=n$ as well. Let
$\rho_{I(\sigma)}\in\mathfrak{S}_n$ \emph{reverse the order} of the $d+1$ blocks
$(x_{i_{k-1}+1},\dots,x_{i_k})$, leaving each block internally unchanged. Then $s_{\,n-i_j}(\rho_{I(\sigma)}\sigma\gamma(0))=-s_{i_j}(\sigma\gamma(0))$ for $j\leq d$ and, in particular,
\begin{align*}
\langle\nu,s_{n-i_j}(\rho_{I(\sigma)}\sigma\gamma(0))\rangle=0.
\end{align*}
Since the blocks remain internally unchanged and the sum of each block is orthogonal to $\nu$, we obtain
\begin{align*}
\langle\nu,s_{n-m}(\rho_{I(\sigma)}\sigma\gamma(0))\rangle > 0, \quad m \in [n-1]\setminus I(\sigma).
\end{align*}

Therefore
$\iota(\sigma):=\rho_{I(\sigma)}\sigma$ maps $\mathcal M_\partial$ into itself,
replacing the facet $F$ by $-F$ and $I(\sigma)$ by $\{n-i_d<\dots<n-i_1\}$, and it is an
involution because $\rho_{I(\iota(\sigma))}$ reverses the block order back. The rest of the proof of Proposition~\ref{prop:crossing} applies verbatim.

\smallskip
\emph{The value of the constant.}
As for the walk, we compute the constant of Theorem~\ref{thm:bridge_comb} on
one concrete configuration, which we choose lacunary in the spirit of
Proposition~\ref{prop:lacunary}. The increments of a bridge must sum  to
zero, so the last increment has to balance all the others: we take unit
vectors $u_1,\dots,u_{n-1}\in \R^d$ in general linear position and  consider
\begin{align*}
x_k&:=R^{k-1}u_k, \qquad 1\leq k\leq n-1,
\\
x_n&:=-(x_1+\dots+x_{n-1})=-R^{n-2}\big(u_{n-1}+O(R^{-1})\big),
\end{align*}
for some large $R$.

Fix $\sigma\in\mathfrak S_n$, write $s_m:=x_{\sigma(1)}+\dots+x_{\sigma(m)}$,
and let $b:=\sigma^{-1}(n)$ be the position of the balancing increment. For $m<b$ the summands are
lacunary, and the largest increment used so far dominates,
\[
s_m=R^{\mu-1}\bigl(u_\mu+O(R^{-1})\bigr),
\qquad \mu=\max\{\sigma(1),\dots,\sigma(m)\}.
\]

For $m\geq b$, since the increments sum  to zero, we can write $s_m=-\sum_{i>m}x_{\sigma(i)}$. This representation does not contain $x_n$, so all summands are lacunary and
\[
s_m=-R^{\nu-1}\bigl(u_\nu+O(R^{-1})\bigr),
\qquad \nu=\max\{\sigma(m+1),\dots,\sigma(n)\}.
\]
Before time $b$ the bridge is
governed by the largest step made so far, like a walk; after time $b$ it must
return to the origin and is governed by the largest step still to be
compensated.

As $m$ grows, $\mu$ runs through the record positions of the part of $\sigma$ before
$b$, and $\nu$ through the record positions of the part after $b$, read from the
right. We call these positions the \emph{two-sided record positions} of $\sigma$:
the positions of the entries visible either from the left or from the right end of the word
$(\sigma(1),\dots,\sigma(n))$, with the position of the maximal entry $n$ excluded. Every such entry
is visible from exactly one end, and we call it a $+$\,record or a
$-$\,record accordingly. Similarly to the walk model, for $R$ large, the origin belongs to the convex hull $\Conv(s_1, \dots, s_{n-1})$ if and only if
\[
0\in\Conv\big(\{u_{\sigma(i)} : i \text{ a $+$\,record}\}
\cup\{-u_{\sigma(i)} : i \text{ a $-$\,record}\}\big).
\]
The proof is identical to Cases~1 and~2 in
Proposition~\ref{prop:lacunary}, with the record vectors replaced by
the signed two-sided record vectors.

Now we count. Construct a permutation by inserting the values
$n,n-1,\dots,1$, each independently and uniformly into one of the slots of the current word. A
value becomes a two-sided record if and only if it falls into one of the two extreme
slots (the leftmost one or the rightmost one), and its sign is determined by the side, so the signs of the records
are independent fair coins. Thus, conditionally on the record values, the
signed record vectors are as in Wendel's theorem, and we arrive at Wendel's formula with a
random number of points:
\[
\pbridge{n}{d}
=\mathbb E\left[\pwend{K_n^{\mathrm{br}}}{d}\right]
=\sum_{k=1}^{n-1}\pr\big(K_n^{\mathrm{br}}=k\big)\,\pwend{k}{d},
\]
where $K_n^{\mathrm{br}}$ is the number of two-sided records of a uniform
random permutation of $[n]$.

The insertion construction also identifies the distribution of
$K_n^{\mathrm{br}}$: at each insertion after the first, when there are $m$ available slots,
the inserted value is a two-sided record with probability $2/m$, independently of the other values. So $K_n^{\mathrm{br}}$ has the same distribution as a sum of independent $\mathrm{Bern}(2/m)$-variables over $m=2,\dots n$.
This distribution arises if one reads a uniform permutation from left to
right and counts the entries that are running maxima or running minima, the first entry excluded. Explicitly,
\[
\pr\big(K_n^{\mathrm{br}}=k\big)=\frac{2^k}{n!}\stirling{n-1}{k},
\]
and substituting this together with~\eqref{eq:def-p-Wend} gives
\[
\pbridge{n}{d}
=\frac{2}{n!}\sum_{k=1}^{n-1}\stirling{n-1}{k}\sum_{j=d}^{k-1}\binom{k-1}{j}
=\frac{2}{n!}\sum_{r\geq0}\stirling{n}{d+2+2r}.
\]
The last identity is verified exactly as in
Lemma~\ref{lem:constants_equivalents}, with $x=1+t$ substituted into
$\sum_{k}\stirling{n-1}{k}x^k=x(x+1)\cdots(x+n-2)$. This is the formula of
Theorem~\ref{thm:bridge_absorption_for_iid} and, after multiplication by
$n!$ and taking the complement, the constant of Theorem~\ref{thm:bridge_comb}.

\medskip
\emph{Characterization of the exceptional set.}
The analogue of Proposition~\ref{prop:walk_exceptional_set} holds for
bridges: $X$ lies on some wall $D_{\sigma,I}$ if and only if for some
$\sigma\in\mathfrak S_n$ the origin lies on the boundary of the polytope
$$\Conv(s_1(\sigma X),\dots,s_{n-1}(\sigma X)).$$ For the nontrivial
implication, let $I=\{i_1<\dots<i_d\} \subseteq [n-1]$ be such that
$s_{i_1}(\sigma X),\dots,s_{i_d}(\sigma X)$ are linearly dependent. As in
the proof of Proposition~\ref{prop:walk_exceptional_set}, split the
increments into the $d+1$ blocks $\{i_{k-1}+1,\dots,i_k\}$, where $i_0:=0$
and $i_{d+1}:=n$, and let $z_1,\dots,z_{d+1}$ be the block sums, so that
$s_{i_k}(\sigma X)=z_1+\dots+z_k$. A linear relation among the $s_{i_k}(\sigma X)$
rewrites as $\sum_{k=1}^{d}\beta_k z_k=0$. In the bridge case, $z_1 + \dots +  z_{d+1} = 0$, so setting $\beta_{d+1} = 0$ we can write
\begin{align*}
    0=\sum_{k=1}^{d}\beta_k z_k
=
    \sum_{k=1}^{d+1}\beta_k z_k
+
    c\sum_{k=1}^{d+1} z_k
=
    \sum_{k=1}^{d+1}(\beta_k+c) z_k.
\end{align*}
Choosing $c = -\min_k \beta_k$ we ensure that all coefficients $\beta_k+c$ are nonnegative and at least one is zero.  Permuting the blocks so that the coefficients
decrease and summing by parts places the origin in the convex hull of the
partial sums at the block boundaries; rotating each block cyclically moves
all remaining partial sums to one side of a hyperplane through the origin,
similarly to the walk case.

\section{Deriving recurrence relations by insertion}\label{sec:insertion}

In this section, we treat all three models (Wendel, walks and bridges) in a unified way. In each model, we start with $n$ vectors in $\R^d$ (in the bridge case, assuming their sum is $0$) and form finitely many polytopes ($2^n$ in the Wendel case, $2^n n!$ in the walk case, and $n!$ in the bridge case). Assuming that the configuration is regular in the sense of the respective model, let $N_{n,d}^{\mathrm{Wend}}$, $N_{n,d}^{\mathrm{walk}}$, $N_{n,d}^{\mathrm{br}}$ be the number of polytopes that contain the origin. Our aim is to we prove recurrence relations for these numbers and use these to deduce probabilistic representations for the absorption probabilities.

\begin{theorem}[Recurrence relations] \label{thm:insertion_recurrences} For every $d\geq 1$, we have
\begin{align*}
N^{\mathrm{Wend}}_{n,d}&=N^{\mathrm{Wend}}_{n-1,d}+N^{\mathrm{Wend}}_{n-1,d-1}, \qquad n\geq d+1,\\
    N^{\mathrm{walk}}_{n,d}&=(2n-1)N^{\mathrm{walk}}_{n-1,d}+N^{\mathrm{walk}}_{n-1,d-1}, \qquad n\geq d+1,\\
    N^{\mathrm{br}}_{n,d}&=(n-1)N^{\mathrm{br}}_{n-1,d}+N^{\mathrm{br}}_{n-1,d-1}, \qquad n\geq d+2.
\end{align*}
For $n\geq1$, we use the boundary conditions
$N^{\mathrm{Wend}}_{n,0}=2^n$, $N^{\mathrm{Wend}}_{n,n} = 0$,  $N^{\mathrm{walk}}_{n,0}=2^n n!$,  $N^{\mathrm{walk}}_{n,n}=0$.
For $n\geq2$, we also set $N^{\mathrm{br}}_{n,0}=n!$,
$N^{\mathrm{br}}_{n,n-1}=0$.
\end{theorem}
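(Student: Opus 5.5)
The plan is to exploit the invariance already established: by Theorems~\ref{thm:wendel_comb}, \ref{thm:convex_hull_random_walk_comb} and~\ref{thm:bridge_comb}, each count $N_{n,d}$ does not depend on the regular configuration. So, as in the computation of constants, it suffices to evaluate it at one well-chosen configuration. I will take a regular configuration of size $n-1$ and \emph{insert a tiny $n$-th vector} $\varepsilon u$, where $u$ is a generic unit vector and $\varepsilon\to0$. The polytopes are then classified according to where and with which sign the tiny vector enters.

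The key geometric lemma is the following. Let $K=\Conv(p_1,\dots,p_k)\subseteq\R^d$ with $0\notin\partial K$, and let $u$ be generic. For all sufficiently small $\varepsilon>0$:
\begin{itemize}
\item if $0\in\Int K$, then $0\in\Conv(K\cup\{\pm\varepsilon u\})$ and also $0\in\Conv(\{\pm\varepsilon u\}\cup(K\pm\varepsilon u))$;
\item if $0\notin K$, then $0\in\Conv(K\cup\{\eta\varepsilon u\})$ if and only if $-\eta u\in\operatorname{pos}(K)$.
\end{itemize}
Moreover, when $0\notin K$, the cone $\operatorname{pos}(K)$ is pointed, and the line $\R u$ meets it nontrivially exactly when $0\in\Conv(\pi p_1,\dots,\pi p_k)$, where $\pi$ is the orthogonal projection onto $u^\perp\cong\R^{d-1}$. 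In that case exactly one of $\pm u$ lies in the cone. Consequently, summing over the two signs $\eta$ gives
\begin{equation*}
2\cdot\mathbb 1_{\{0\in K\}}+\mathbb 1_{\{0\in \pi K\}}-\mathbb 1_{\{0\in K\}}.
\end{equation*}
The translated version needed for walks, where the origin is adjoined and everything is shifted by $\eta\varepsilon u$, reduces to the same criterion after translating by $-\eta\varepsilon u$.

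For Wendel, I apply this with $K=\Conv(\eps_1x_1,\dots,\eps_{n-1}x_{n-1})$ and sum over $\eps$. This yields
\begin{equation*}
N^{\mathrm{Wend}}_{n,d}=2N_{n-1,d}+N^{\mathrm{Wend}}_{n-1,d-1}-N_{n-1,d}=N^{\mathrm{Wend}}_{n-1,d}+N^{\mathrm{Wend}}_{n-1,d-1},
\end{equation*}
where the projected configuration $\pi x_i$ is regular in $\R^{d-1}$ for generic $u$.

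For walks, I write an element of $\mathfrak B_n$ as a signed permutation $g'$ of the first $n-1$ vectors, together with an insertion slot $j\in\{1,\dots,n\}$ and a sign $\eta$ for $\varepsilon u$. If $j\geq2$, the partial sums are those of $g'X'$ (one of them duplicated), each perturbed by at most $\varepsilon$. Regularity of $X'$ and Lemma~\ref{lem:stability} then show that membership is unchanged for both signs, contributing $2(n-1)N^{\mathrm{walk}}_{n-1,d}$. If $j=1$, the partial sums are $\eta\varepsilon u$ and $s'_k+\eta\varepsilon u$, so the lemma applies with $K=H(g'X')$. Projecting a walk gives a walk in $\R^{d-1}$, so this slot contributes $N^{\mathrm{walk}}_{n-1,d}+N^{\mathrm{walk}}_{n-1,d-1}$. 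The total is $(2n-1)N^{\mathrm{walk}}_{n-1,d}+N^{\mathrm{walk}}_{n-1,d-1}$.

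For bridges, the $n-1$ large increments form an $(n-1)$-bridge up to a correction of order $\varepsilon$. I absorb this correction by a small perturbation, legitimate by regularity and Lemma~\ref{lem:stability}. The $n-2$ interior slots contribute $(n-2)N^{\mathrm{br}}_{n-1,d}$ by stability. The two end slots create the points $+\varepsilon u$ and $-\varepsilon u$ respectively, so together they play the role of the two signs $\eta$ and contribute $N^{\mathrm{br}}_{n-1,d}+N^{\mathrm{br}}_{n-1,d-1}$. The boundary conditions follow directly: with $d=0$ every hull contains the origin, and with $n\leq d$ (respectively $n-1\leq d$ for bridges) the general-position partial sums never do.

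I expect the main obstacle to be rigor at the special configuration. Namely, one must check that the inserted configuration is regular for small $\varepsilon$ and generic $u$ (a leading-term argument in $\varepsilon$, as in Proposition~\ref{prop:lacunary}), that the projected configurations are regular in $\R^{d-1}$, and, for bridges, that the $\varepsilon$-correction to the sum does not spoil the counts.
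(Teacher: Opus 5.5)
Your proposal is correct and is essentially the paper's own insertion argument. You adjoin a small vector $\delta u$, note that $T_n-2$ of the insertions leave absorption unchanged by stability, and treat the remaining pair (the two signs for Wendel, the first slot for walks, the two end slots for bridges) by the identity $\mathbb{1}_{\{0\in\Conv(K\cup\{v\})\}}+\mathbb{1}_{\{0\in\Conv(K\cup\{-v\})\}}=\mathbb{1}_{\{0\in K\}}+\mathbb{1}_{\{0\in\pi K\}}$; your pointed-cone argument proves exactly this, which is Lemma~\ref{lem:one_point} as used in Lemma~\ref{lem:insertion}.

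The only step that needs careful wording is the bridge end slots, where the paper uses a Hausdorff-stable version of this identity. Lemma~\ref{lem:stability} cannot be applied to the hull containing $\pm\delta u$, because there the origin may lie within $O(\delta)$ of the boundary, the same order as the perturbation. However, your criterion $-\eta u\in\operatorname{pos}(K)$ depends only on the direction $\pm u$, so you can apply Lemma~\ref{lem:stability} to the partial sums together with $\pm u$ itself, a tuple in general position for generic $u$. This makes your ``absorb the correction'' step rigorous.
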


The idea is to determine how the number of absorbing polytopes changes when a configuration of $n-1$ vectors in $\R^d$ is enlarged by adjoining a  ``small'' vector.
More precisely, consider a regular configuration $X=(x_1\,|\,\dots\,|\,x_{n-1})$ of $n-1$ vectors in $\mathbb{R}^d$. We append to $X$  a column $\delta u$ at position $n$, where $u\neq 0$ is a vector in $\R^d$ and $\delta > 0$ is small.   In the bridge case, we suppose that $x_1 + \cdots + x_{n-1} = 0$ and  $\delta u$ is at the same time subtracted from $x_1$, so that the new columns still sum to zero. Let $\overline{X}$ be the resulting enlarged configuration  of $n$ vectors in $\R^d$.

Now, each model has an associated symmetry group $G_n$, namely $G_n= \{\pm 1\}^n$ in the Wendel case, $G_n = \mathfrak{B}_n$ in the walk case, and $G_n = \mathfrak{S}_n$ in the bridge case. Acting on the enlarged configuration $\overline{X}$ by some element $\overline{g}\in G_n$ is the same as acting on $X$ by some element $g\in G_{n-1}$,  inserting the  vector $\eta \delta u$ at a suitable slot and choosing the sign $\eta \in \{\pm 1\}$ (in the bridge case, $\eta = 1$).

Let $T_n$ be the number of possible insertions.
\begin{itemize}
\item In the Wendel case, there are no permutations and the only choice is the sign of $\delta u$.
So  $T_n=2$.
\item
In the walk case, we insert $\delta u$ at one of $n$ possible slots and choose one of two possible signs. There are $T_n= 2n$ insertions.
\item
In the bridge case, we have $n$ slots and do not choose a sign. So $T_n=n$.
\end{itemize}
In all three cases, we have $T_n = |G_n|/|G_{n-1}|$. The recurrence relations appearing in Theorem~\ref{thm:insertion_recurrences} can be written in a unified way:
\begin{equation}\label{eq:recurrence_unified}
N_{n,d} = (T_n-1) N_{n-1,d} +  N_{n-1,d-1},
\end{equation}
where $N_{n,d}$ denotes $N_{n,d}^{\mathrm{Wend}}, N_{n,d}^{\mathrm{walk}}$ or $N_{n,d}^{\mathrm{br}}$ depending on the model.

Our aim is to prove this recurrence. The following lemma is the geometric core of the argument.
\begin{lemma}\label{lem:one_point}
Let $K\subset\R^{d}$ be a nonempty compact convex set and let $v\in \R^d$, $v\neq0$. Let $\pi$ be the orthogonal projection onto $v^{\perp}$. Then
\begin{equation}\label{eq:one_point}
    \mathbb{1}_{\{0\in\Conv(K\cup\{v\})\}}
   +\mathbb{1}_{\{0\in\Conv(K\cup\{-v\})\}}
   =\mathbb{1}_{\{0\in K\}}+\mathbb{1}_{\{0\in\pi K\}} .
\end{equation}
 If additionally $0\notin\partial K$ and $0\notin\partial_{v^{\perp}}(\pi K)$, where the latter boundary is taken in $v^{\perp}$, then
\begin{equation}\label{eq:one_point_ii}
    \mathbb{1}_{\{0\in\Conv(K_{+}\cup\{v\})\}}
   +\mathbb{1}_{\{0\in\Conv(K_{-}\cup\{-v \})\}}
   =\mathbb{1}_{\{0\in K\}}+\mathbb{1}_{\{0\in\pi K\}},
\end{equation}
whenever $K_{+}$ and $K_{-}$ are nonempty compact convex sets in $\R^d$
whose Hausdorff distance to $K$ is smaller than $\eps$, where $\eps>0$ is a sufficiently small number  depending only on $K$ and $v/\|v\|$.
\end{lemma}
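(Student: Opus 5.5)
The plan is to reduce everything to how $K$ meets the line $\ell:=\R v$, which gives a short case analysis. First observe that $0\in\Conv(K\cup\{v\})$ if and only if $K$ meets the closed ray $R_-:=\{-tv: t\geq 0\}$. Indeed, $0\in\Conv(K\cup\{v\})$ means $0=\lambda k+(1-\lambda)v$ with $k\in K$ and $\lambda\in[0,1]$. Here $\lambda=0$ is impossible since $v\neq0$, so $k=-\frac{1-\lambda}{\lambda}v\in R_-$. Conversely, if $k=-tv\in K$ with $t\ge0$, then the convex combination $\frac{1}{1+t}k+\frac{t}{1+t}v$ equals $0$. Symmetrically, $0\in\Conv(K\cup\{-v\})$ if and only if $K$ meets $R_+:=\{tv:t\ge0\}$. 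Finally, $0\in\pi K$ if and only if $K\cap\ell\neq\emptyset$. Since $K$ is compact and convex, $K\cap\ell$ is either empty or a segment $\{sv: a\le s\le b\}$.

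For \eqref{eq:one_point} I would split into three cases. If $K\cap\ell=\emptyset$, both sides vanish. If $K\cap\ell$ contains $0$ (i.e.\ $a\le0\le b$), then $K$ meets both rays, so the left side is $2$ and the right side is $1+1$. If $K\cap\ell\neq\emptyset$ but $0\notin K$, then the segment lies strictly on one side of $0$ along $\ell$, so exactly one ray is met; the left side is $1$ and the right side is $0+1$. This proves the first identity.

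For \eqref{eq:one_point_ii}, the assumptions $0\notin\partial K$ and $0\notin\partial_{v^\perp}(\pi K)$ leave four configurations, and in each I would show that both indicators on the left are the same as for $K$ once $\eps$ is small.
\begin{itemize}
\item $0\in\Int K$. A ball $B(0,r)\subseteq K$ forces $0\in K_\pm$ for Hausdorff distance $<r/2$, by a standard support-function argument. Both rays are then met, and both sides equal $2$.
\item $0\notin K$ and $0\notin \pi K$. Then $\operatorname{dist}(K,\ell)>0$, and $K_\pm$ also miss $\ell$ once $\eps$ is below this distance. Both sides are $0$.
\item $0\in\Int K$ together with $0\notin\pi K$ is impossible, since $0\in K$ implies $0\in\pi K$.
\item $0\notin K$ and $0\in\relint_{v^\perp}\pi K$. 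This is the main obstacle. Here $K\cap\ell=\{sv:a\le s\le b\}$, and, say, $0<a$, using $\operatorname{dist}(0,K)>0$.
\end{itemize}

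In this last case I would argue in three steps. (a) Since $\pi$ is $1$-Lipschitz, $\pi K_\pm$ is Hausdorff close to $\pi K$ inside $v^\perp$. Because $0$ has a relative neighbourhood in $\pi K$, the same argument as in the interior case gives $0\in\pi K_\pm$, i.e.\ $K_\pm\cap\ell\neq\emptyset$. (b) The $\eps$-neighbourhood $K^{\eps}$ of $K$ satisfies $K^{\eps}\cap\ell\to K\cap\ell$ as $\eps\to0$, by compactness. So for small $\eps$ the set $K^{\eps}\cap\ell$ is contained in $\{sv: s>a/2\}$, and hence $K_\pm\cap\ell\subseteq\{sv:s>0\}$. (c) Consequently $K_-$ meets $R_+$ and $K_+$ does not meet $R_-$, so the left side is $0+1=1$, matching the right side $0+1$. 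The case $b<0$ is symmetric.

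All thresholds in these steps depend only on $K$ and on the line $\ell$, that is, on $v/\|v\|$, as claimed. The delicate part is step (b) together with the relative-interior stability in step (a), which is exactly where the hypothesis $0\notin\partial_{v^\perp}(\pi K)$ is used.
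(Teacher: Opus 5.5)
Your proof is correct and follows essentially the same route as the paper. You reduce each indicator to whether $K$ (or $K_\pm$) meets one of the rays $R_\pm$, then run the same case split using Hausdorff stability of interior points (in $\R^d$ and in $v^{\perp}$) and positive distances. Your step (b) is just a longer way of saying what the paper says directly: $K$ lies at positive distance from the ray it misses, so $K_\pm$ miss that ray as well.
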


\begin{proof}
Consider the rays $R_{\pm}=\{\pm sv:s\geq0\}$ and the line $R=R_{+}\cup R_{-}$ spanned by $v$.
For a nonempty compact convex set $A$ and a vector $w\neq0$ we have $0\in\Conv(A\cup\{w\})$ if
and only if $A$ meets the ray $\{-sw:s\geq0\}$. Hence the two indicators on the left-hand side of \eqref{eq:one_point} equal $\mathbb{1}_{\{K\cap R_{-}\neq\emptyset\}}$ and
$\mathbb{1}_{\{K\cap R_{+}\neq\emptyset\}}$. Since $K$ meets $R$ if and only if
$0\in\pi K$, and meets both rays if and only if $0\in K$, \eqref{eq:one_point} follows.

By the above observation with $A = K_{\pm}$, the left-hand side of \eqref{eq:one_point_ii} equals
\[
\mathbb{1}_{\{K_{-}\cap R_{+}\neq\emptyset\}}
+
\mathbb{1}_{\{K_{+}\cap R_{-}\neq\emptyset\}}.
\]
We shall need the following Hausdorff stability property:
If the origin is an interior point of a
compact convex set, then it is an interior point of every compact convex set sufficiently
close to it in the Hausdorff distance.

Since $0\notin\partial K$, there are three cases. If $0\in\operatorname{int}K$, then $0\in K_{\pm}$ by Hausdorff stability, and both sides of \eqref{eq:one_point_ii} equal $2$. If $0\notin K$
and $K$ misses $R$, then $K$ is at a positive distance from $R$, so $K_{\pm}$ miss $R$ as
well. In this case, both sides of \eqref{eq:one_point_ii} vanish.

Suppose finally that  $0\notin K$ and $K$ meets exactly one of the rays, without loss of generality
$R_{-}$. Then the right-hand side of \eqref{eq:one_point_ii} equals $1$.  Since $K$ is at a
positive distance from $R_{+}$, both $K_{+}$ and $K_{-}$ miss $R_{+}$; in particular $\mathbb{1}_{\{K_{-}\cap R_{+}\neq\emptyset\}}=0$.  Further, $0$ is an interior point of $\pi K$ in $v^{\perp}$,
so $0\in\pi K_{+}$ by Hausdorff stability, that is, $K_{+}$ meets $R$; since $K_{+}$ misses
$R_{+}$, it meets $R_{-}$, and $\mathbb{1}_{\{K_{+}\cap R_{-}\neq\emptyset\}}=1$. Hence the left-hand side of \eqref{eq:one_point_ii} also equals $1$.

The number $\eps>0$ should be chosen smaller than all relevant stability radii and positive distances. Since the objects involved, $R$, $R_+$, $R_-$ and $v^\perp$, depend on $v$ only through $v/\|v\|$, we can choose $\eps$ depending on $K$ and $v/\|v\|$ only.
\end{proof}

Recall that  $X=(x_1\,|\,\dots\,|\,x_{n-1})$ is a regular configuration of $n-1$  vectors in $\R^d$, and $\overline X$ is the enlarged configuration with appended $n$-th column $\delta u$. (In the bridge case, $\delta u $ is at the same time subtracted from $x_1$.)  For some element $\overline g\in G_n$, we consider $\overline g \overline X$ and represent it as the result of inserting of a column $\eta \delta u$ into $gX$ for a suitable $g\in G_{n-1}$.  Let $gX = (y_1 \, |\, \dots \,|\,  y_{n-1})$.
We write $K$ for the
convex hull attached to $gX$ by the model at hand:
\begin{align*}
    K =
    \begin{cases}
    \Conv (y_i : i \in [n-1]),\quad \text{ for the  Wendel model},\\
    \Conv (y_1+\dots+ y_i: i \in [n-1]),\quad \text{ for the random walk model},\\
    \Conv (y_1+\dots+ y_i: i \in [n-2]),\quad \text{ for the random bridge model.}
    \end{cases}
\end{align*}
Since $X$ is regular, $0\notin \partial K$. Similarly, let $\overline K$ be  the convex hull attached to $\overline g \overline X$.

In the next lemma we count insertions for which $0\in \overline K$.

\begin{lemma}[Insertions leading to absorption]\label{lem:insertion}
Let $u\in \R^d$, $u\neq 0$ and let $\pi: \R^d \to u^{\perp}$ be the orthogonal projection  onto the hyperplane $u^{\perp}$. Suppose that $0\notin \partial_{u^{\perp}}(\pi K)$, where the boundary is taken in $u^{\perp}$.
Then, for all sufficiently small $\delta>0$, the
number of admissible insertions of $\delta u$ (including the sign choice in the Wendel and walk models) for which $0\in \overline K$ equals
\[
    (T_n-1)\,\mathbb{1}_{\{0\in K\}}+\mathbb{1}_{\{0\in\pi K\}}.
\]
\end{lemma}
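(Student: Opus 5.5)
The plan is to split the $T_n$ admissible insertions into one distinguished pair, handled by Lemma~\ref{lem:one_point}, and $T_n-2$ ``interior'' insertions. Each interior insertion changes $K$ only by a Hausdorff-small perturbation, so by $0\notin\partial K$ it contributes exactly $\mathbb{1}_{\{0\in K\}}$. The pair will produce $\mathbb{1}_{\{0\in K\}}+\mathbb{1}_{\{0\in\pi K\}}$ via~\eqref{eq:one_point_ii}, with $v=\delta u$, for suitable sets $K_\pm$ that are $O(\delta)$-close to $K$. Adding up gives
\[
(T_n-2)\mathbb{1}_{\{0\in K\}}+\mathbb{1}_{\{0\in K\}}+\mathbb{1}_{\{0\in\pi K\}}=(T_n-1)\mathbb{1}_{\{0\in K\}}+\mathbb{1}_{\{0\in\pi K\}}.
\]
Throughout I use the observation from the proof of Lemma~\ref{lem:one_point} that the condition there depends on $v$ only through $v/\|v\|=u/\|u\|$. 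Hence a single $\eps>0$, depending on $K$ and $u/\|u\|$ only, works for all small $\delta$. Since $g$ ranges over the finite group $G_{n-1}$, one threshold for $\delta$ serves all cases simultaneously.

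\emph{Wendel model.} Here $T_n=2$ and there are no interior insertions. The two insertions give $\Conv(K\cup\{\delta u\})$ and $\Conv(K\cup\{-\delta u\})$. Then~\eqref{eq:one_point} with $v=\delta u$ yields exactly $\mathbb{1}_{\{0\in K\}}+\mathbb{1}_{\{0\in\pi K\}}$, because $\pi$ depends only on the direction of $v$.

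\emph{Walk model.} Let $s_k=y_1+\dots+y_k$. Inserting $\eta\delta u$ at slot $1$ produces the partial sums $\eta\delta u,\ s_1+\eta\delta u,\dots,s_{n-1}+\eta\delta u$. So $\overline K=\Conv(K_\eta\cup\{\eta\delta u\})$ with $K_\eta:=K+\eta\delta u$, which lies at Hausdorff distance $\delta\|u\|$ from $K$. The two signs at slot $1$ therefore form the distinguished pair, and~\eqref{eq:one_point_ii} applies. Now insert at slot $j\ge2$, with either sign. The new partial sums are $s_1,\dots,s_{j-1}$, then $s_{j-1}+\eta\delta u$, then $s_k+\eta\delta u$ for $k\ge j$. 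Every new point lies within $\delta\|u\|$ of a point of $K$, and every $s_k$ has a new point within $\delta\|u\|$. Hence $d_H(\overline K,K)\le\delta\|u\|$. Because $0\notin\partial K$, Hausdorff stability applies: if $0\in\Int K$ the origin stays inside, and if $0\notin K$ it stays outside. So each of these $2(n-1)=T_n-2$ insertions contributes $\mathbb{1}_{\{0\in K\}}$.

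\emph{Bridge model.} The increments are those of $gX$, except that one entry has been shifted by $-\delta u$; that shift moves all affected partial sums by at most $\delta\|u\|$. The distinguished pair is now slot $1$ together with the last slot $n$.
\begin{itemize}
\item[] At slot $1$, the points are $\delta u$ together with translates by $\delta u$ of the perturbed partial sums. This gives $\Conv(K_+\cup\{\delta u\})$ with $K_+$ within $2\delta\|u\|$ of $K$.
\item[] At slot $n$, the points are the perturbed partial sums, whose last one equals $-\delta u$ since all $n$ increments sum to $0$. This gives $\Conv(K_-\cup\{-\delta u\})$.
\end{itemize}
So~\eqref{eq:one_point_ii} applies to this pair. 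At the $n-2=T_n-2$ middle slots the hull is again $O(\delta)$-close to $K$, and the stability argument gives $\mathbb{1}_{\{0\in K\}}$ for each.

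The main obstacle is the bookkeeping in the bridge case. The convex hull omits $s_{n-1}$ of the new bridge, and simultaneously $x_1$ is modified. I must check carefully that, at slot $1$ and slot $n$, the extra point is exactly $\pm\delta u$ and the remaining points are $O(\delta)$-close to the vertex set $\{s_1,\dots,s_{n-2}\}$ of $K$, and nothing more.
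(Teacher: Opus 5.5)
Your proposal is correct and follows essentially the same route as the paper. The $T_n-2$ non-extreme insertions are handled by Hausdorff stability using $0\notin\partial K$, and the distinguished pair is handled by Lemma~\ref{lem:one_point} with $v=\delta u$: the two sign choices via \eqref{eq:one_point} in the Wendel case, the two signs at slot $1$ for walks, and slots $1$ and $n$ for bridges. Your explicit Hausdorff bounds and your remark that $\eps$ depends only on $K$ and $u/\|u\|$ just make explicit details the paper leaves implicit.
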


\begin{proof} For the Wendel model, $\overline g \overline X$ differs from $gX$ by one column, either $\delta u$ or $-\delta u$, always appended at position $n$. So $\overline K = \Conv (K\cup \{\eta \delta u\})$ with $\eta = \pm 1$.  By Lemma~\ref{lem:one_point} with $v = \delta u$,
\[
    \mathbb{1}_{\{0\in\mathrm{Conv}(K\cup\{\delta u\})\}}
    +\mathbb{1}_{\{0\in\mathrm{Conv}(K\cup\{-\delta u\})\}}
    =\mathbb{1}_{\{0\in K\}}+\mathbb{1}_{\{0\in\pi K\}},
\]
which proves the claim since $T_n=2$ in the Wendel case.

\medskip
In the random walk model, $\overline g \overline X$ differs from $gX$ by either $\delta u$ or $-\delta u$ that can be inserted at any of $n$ slots. If $\eta \delta u$ is not inserted as the first column, the new convex hull $\overline K$ coincides with $K$ up to $O(\delta)$ in Hausdorff metric. Since $0 \notin \partial K$, either both $K$ and $\overline K$ contain the origin, or neither does. This contributes
\[
    (2n-2)\,\mathbb{1}_{\{0\in K\}}.
\]
If $\eta \delta u$ is inserted as the first column, the new convex hull is given by $\overline K = \Conv(\{\eta \delta u\} \cup (\eta \delta u+ K))$, where $\eta = \pm1$. By Lemma~\ref{lem:one_point} with $v= \delta u$ and $K_\eta = \eta \delta u + K$, this contributes
\[\mathbb{1}_{\{0\in K\}}+\mathbb{1}_{\{0\in\pi K\}}.
\]
In total, there are $(2n-1)\mathbb{1}_{\{0\in K\}}+\mathbb{1}_{\{0\in\pi K\}}$ insertions for which $0\in \overline K$. This completes the proof in the random walk case.

\medskip
In the random bridge model,  $\overline g \overline X$ differs from $gX$ by $\delta u$ that can be inserted at any of $n$ slots, and by subtracting $\delta u$ from one of the remaining columns. If the insertion occurs neither at the first nor at the $n$-th slot, the new convex hull $\overline K$ coincides with $K$ up to $O(\delta)$ in Hausdorff metric. Since $0 \notin \partial K$, either both $K$ and $\overline K$ contain the origin, or neither does. These $n-2$ slots contribute
\[
    (n-2)\,\mathbb{1}_{\{0\in K\}}.
\]
If $\delta u$ is inserted into the first slot, $\overline K$ can be represented as $\Conv (\{\delta u\} \cup K_+)$ for some convex polytope $K_+$ at Hausdorff distance $O(\delta)$ from $K$. If $\delta u$ is inserted into the last slot, then the last point visited by the bridge of length $n$ is $-\delta u$ and $\overline K$ can be represented as $\Conv (\{-\delta u\} \cup K_-)$ for some convex polytope $K_-$ at Hausdorff distance $O(\delta)$ from $K$. By Lemma~\ref{lem:one_point} with $v = \delta u$, these two slots contribute
\[
\mathbb{1}_{\{0\in K\}}+\mathbb{1}_{\{0\in\pi K\}}.
\]
In total, there are $(n-1)\mathbb{1}_{\{0\in K\}}+\mathbb{1}_{\{0\in\pi K\}}$ insertions for which $0\in \overline K$. This completes the proof in the random bridge case.
\end{proof}

We are now ready to prove~\eqref{eq:recurrence_unified}. In order to calculate $N_{n,d}$ we have to sum $\mathbb{1}_{\{0 \in \overline{K}\}}$ over all $\overline{g} \in G_n$. Equivalently, we can take a sum over $g \in G_{n-1}$ of the respective number of insertions for which $0\in \overline K$. Choosing $u$ generic and then $\delta$ small, we may apply Lemma~\ref{lem:insertion} to each term in this sum. So $N_{n,d}$ is obtained by summing    $(T_n-1)\,\mathbb{1}_{\{0\in K\}}+\mathbb{1}_{\{0\in\pi K\}}$ over $g\in G_{n-1}$. Summing $\mathbb{1}_{\{0\in K\}}$ over $g\in G_{n-1}$ gives exactly $N_{n-1,d}$, while summing $\mathbb{1}_{\{0\in\pi K\}}$ gives $N_{n-1,d-1}$. The proof of~\eqref{eq:recurrence_unified} is complete.

\smallskip
The recurrence relations from Theorem~\ref{thm:insertion_recurrences} admit the following probabilistic interpretation.

\begin{corollary}[Probabilistic representations for absorption probabilities]\label{cor:bernoulli}
Let $U_2,U_3,\dots$ be independent random variables with $U_m\sim\mathrm{Bern}(1/T_m)$, where
$T_m=2m$ for the walk, $T_m=m$ for the bridge and $T_m=2$ for Wendel. Then
\[    \pwalk{n}{d}=\mathbb{P}\Big(\sum_{m=2}^{n}U_m\geq d\Big),
    \qquad
\pbridge{n}{d}=\mathbb{P}\Big(\sum_{m=3}^{n}U_m\geq d\Big),
\]
the sum for the bridge starting at $m=3$, and
\[
\pwend{n}{d}=\mathbb{P}\Big(\sum_{m=2}^{n}U_m\geq d\Big)
    =\mathbb{P}\left(\mathrm{Bin}(n-1,\tfrac12)\geq d\right).
\]
\end{corollary}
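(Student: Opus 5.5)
The plan is to convert the unified recurrence \eqref{eq:recurrence_unified} into a recurrence for probabilities and then recognise it as the recurrence satisfied by tail probabilities of sums of independent Bernoulli variables.

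\textbf{Step 1: normalise.} Divide by $|G_n|=T_n|G_{n-1}|$ and write $p_{n,d}:=N_{n,d}/|G_n|$. By the combinatorial versions (Theorems~\ref{thm:wendel_comb}, \ref{thm:convex_hull_random_walk_comb}, \ref{thm:bridge_comb}) and averaging over the group, $p_{n,d}$ is the corresponding absorption probability $\pwend{n}{d}$, $\pwalk{n}{d}$ or $\pbridge{n}{d}$. Then \eqref{eq:recurrence_unified} becomes
\[
p_{n,d}=\Bigl(1-\frac1{T_n}\Bigr)p_{n-1,d}+\frac1{T_n}\,p_{n-1,d-1}.
\]
The boundary conditions of Theorem~\ref{thm:insertion_recurrences} become $p_{n,0}=1$ and $p_{n,n}=0$ for walk and Wendel, and $p_{n,n-1}=0$ for the bridge.

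\textbf{Step 2: the Bernoulli side.} Define $Q_{n,d}:=\pr(\sum_{m=m_0}^n U_m\ge d)$, with $m_0=2$ for walk and Wendel and $m_0=3$ for the bridge. Conditioning on $U_n$ gives
\[
Q_{n,d}=\Bigl(1-\frac1{T_n}\Bigr)Q_{n-1,d}+\frac1{T_n}Q_{n-1,d-1},
\]
which is the same recurrence. For boundaries: $Q_{n,0}=1$ trivially. For walk and Wendel the sum has $n-1$ terms, so $Q_{n,n}=0$; for the bridge it has $n-2$ terms, so $Q_{n,n-1}=0$. For Wendel, $T_m=2$ gives $\mathrm{Bin}(n-1,\frac12)$, and this also matches \eqref{eq:def-p-Wend} directly.

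\textbf{Step 3: induction on $n$.} The main care is in the range of validity: the recurrence holds only for $n\ge d+1$ (walk, Wendel) or $n\ge d+2$ (bridge). Outside this range the boundary values must cover everything, and these values must agree with the natural conventions ($\pwend{k}{d}=0$ for $k\le d$, absorption impossible with too few points by general position). Concretely, I would induct on $n$ for all $d\ge0$ simultaneously, extending both $p$ and $Q$ by $0$ for $d\ge n$ (resp.\ $d\ge n-1$) and by $1$ for $d\le 0$. I would then check that the recurrence also holds for these extended values at the edge cases. For example, at $d=n$ in the walk model it reads $0=(1-\frac1{2n})\cdot 0+\frac1{2n}p_{n-1,n-1}=0$, so it is consistent. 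Once the edge cases are checked, the two arrays agree everywhere.

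\textbf{Base cases.} Walk and Wendel start at $n=1$, where both arrays are $1$ at $d=0$ and $0$ for $d\ge1$. The bridge starts at $n=2$, where both vanish for $d\ge1$ since the sum is empty; here $p^{\mathrm{br}}_{2,d}=0$ because $\Conv(S_1)$ is a single nonzero point.

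The only real obstacle is this bookkeeping of boundary conventions, especially the bridge index shift $m_0=3$. That shift reflects the fact that the first nontrivial bridge is $n=3$, i.e.\ $T_2=2$ contributes nothing.
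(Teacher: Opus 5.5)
Your proposal is correct and follows the paper's own argument. You divide \eqref{eq:recurrence_unified} by $|G_n|=T_n|G_{n-1}|$ to obtain \eqref{eq:recurrence_prob}, check that the Bernoulli tail probabilities satisfy the same recurrence by conditioning on $U_n$, and match the boundary values, and your handling of the validity ranges ($n\geq d+1$, resp.\ $n\geq d+2$ for the bridge) and of the bridge base case $n=2$ is more careful than the paper's brief remark.
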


Indeed, dividing \eqref{eq:recurrence_unified} by $|G_n| = T_n|G_{n-1}|$ turns it into a
recurrence for the absorption probabilities $p(n,d) = N_{n,d}/|G_n|$:
\begin{equation}\label{eq:recurrence_prob}
    p(n,d) = \Bigl(1 - \frac{1}{T_n}\Bigr) p(n-1,d) + \frac{1}{T_n}\, p(n-1,d-1) .
\end{equation}
The probability
$\mathbb{P}(U_{m_0} + \dots +  U_d \geq d)$ (with $m_0=2$ for Wendel and walks and $m_0=3$ for bridges) satisfies \eqref{eq:recurrence_prob} (condition
on $U_n$) and has the same initial values as $p(n,d)$: both equal $1$ for $d = 0$, and
vanish for $d \geq 1$ at the smallest value of $n$ ($n=d$ for Wendel and walks, or $n=d+1$ for the bridge),
where the sum is empty and the convex hull is a single point distinct from the origin.

In words: we run the construction backwards in time, removing the vectors one at a
time.  Passing from $m$ to $m-1$ vectors undoes one of the $T_m$ equally likely
insertions.  By Lemma~\ref{lem:insertion}, $T_m-1$ of them give $0\in\overline{K}$
exactly when $0\in K$, and a single one gives $0\in\overline{K}$ exactly when
$0\in\pi K$: undoing it projects the configuration, and the dimension drops by one.
Hence the dimension drops with probability $1/T_m$ at each step, independently of the
other steps, and $U_m$ is the indicator of the drop.  The unrolling stops either when
the dimension reaches zero, where absorption is automatic, or when the vectors are
exhausted, where the hull is a single point distinct from the origin and there is no
absorption.  Thus the origin is absorbed in $\R^d$ exactly when there are at least $d$
drops, that is, when $U_{m_0}+\dots+U_n\geq d$.

\section{Probabilistic representations and recurrences}
\label{sec:absorption_properties}

The purpose of this section is to derive probabilistic representations and recurrence relations for the absorption probabilities directly from their algebraic formulas appearing in  Theorems~\ref{thm:1}, \ref{thm:wendel}, and~\ref{thm:bridge_absorption_for_iid}.
The three arrays we are interested in are defined as follows:
\begin{align}
\pwalk{n}{d}
    &:=
    \frac{2}{2^{n}n!}
    \sum_{r\geq 0}
    \stirlingb{n}{d+1+2r},
    \qquad n\geq 1, \quad d\geq 0,
    \label{eq:def-p-walk-rep}
    \\
\pbridge{n}{d}
    &:=
    \frac{2}{n!}
    \sum_{r\geq 0}
    \stirling{n}{d+2+2r},
    \qquad n\geq 2, \quad d\geq 0,
    \label{eq:def-p-br-rep}
    \\
\pwend{n}{d}
    &:=
    \frac{1}{2^{n-1}}
    \sum_{j=d}^{n-1}
    \binom{n-1}{j},
    \qquad n\geq 1, \quad d\geq 0.
\label{eq:def-p-Wend-rep}
\end{align}
As before, $\stirling{n}{k}$ and $\stirlingb{n}{k}$ are understood to be
zero for $k\notin\{0,\dots,n\}$, and an empty sum is zero.
Evaluation of the corresponding generating polynomials at $1$ and $-1$
shows that
\begin{equation}
    \pwalk{n}{0}=\pwend{n}{0}=1 \quad (n\geq 1),
    \qquad
    \pbridge{n}{0}=1 \quad (n\geq 2).
    \label{eq:p-zero}
\end{equation}

\begin{proposition}[Random-walk absorption probabilities]
\label{prop:p-walk}
Let $U_2,U_3,\dots$ be independent random variables with
$
U_m\sim\operatorname{Bern}(\frac 1 {2m})$,
$m\geq 2$.
Then, for $n\geq 1$ and $d\geq 0$,
\begin{equation}
    \pwalk{n}{d}
    =\pr\left(\sum_{m=2}^{n}U_m\geq d\right).
    \label{eq:p-walk-bernoulli}
\end{equation}
Consequently, for $n\geq 2$ and $d\geq 1$,
\begin{equation}
    \pwalk{n}{d}
    =\frac {2n-1}{2n} \pwalk{n-1}{d}
    +\frac {1}{2n} \pwalk{n-1}{d-1}.
    \label{eq:p-walk-recurrence}
\end{equation}
Together with the boundary conditions $\pwalk{n}{0}=1$ for all  $n\geq 1$ and $\pwalk{1}{d}=0$  for all $d\geq 1$, this
recurrence uniquely determines the array.
\end{proposition}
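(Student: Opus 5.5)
The plan is to work purely algebraically from the definition \eqref{eq:def-p-walk-rep}, as this section intends, via generating functions. Set $P_n(t):=\prod_{m=1}^{n}(t+2m-1)=\sum_k \stirlingb{n}{k}t^k$. Dividing by $2^n n!=\prod_{m=1}^n 2m$ gives
\[
\frac{P_n(t)}{2^n n!}=\prod_{m=1}^{n}\frac{t+2m-1}{2m}=\frac{1+t}{2}\prod_{m=2}^{n}\Bigl(\bigl(1-\tfrac1{2m}\bigr)+\tfrac{1}{2m}t\Bigr).
\]
The product over $m\ge2$ is exactly $\mathbb E[t^{V_n}]$ with $V_n:=\sum_{m=2}^n U_m$, so $P_n(t)/(2^nn!)=\tfrac12(1+t)\,\mathbb E[t^{V_n}]$. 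Writing $a_j:=\pr(V_n=j)$, this yields $\stirlingb{n}{k}/(2^n n!)=\tfrac12(a_k+a_{k-1})$, with $a_{-1}:=0$.

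Next I substitute this into \eqref{eq:def-p-walk-rep}:
\[
\pwalk{n}{d}=\sum_{r\ge0}\bigl(a_{d+1+2r}+a_{d+2r}\bigr)=\sum_{j\ge d}a_j=\pr(V_n\ge d).
\]
The alternating-parity sum telescopes into a full tail sum, because consecutive pairs $(d+2r,d+2r+1)$ tile $\{d,d+1,\dots\}$. This is the same mechanism as the $[t^j]\,t^r/(1+t)$ computation in Lemma~\ref{lem:constants_equivalents}, run in reverse. For $d=0$ the formula gives $1$, consistent with \eqref{eq:p-zero}, and for $n=1$ the sum is empty, so $V_1=0$ and $\pwalk{1}{d}=0$ for $d\ge1$.

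The recurrence \eqref{eq:p-walk-recurrence} follows by conditioning on $U_n$, which is independent of $V_{n-1}$:
\[
\pr(V_n\ge d)=\bigl(1-\tfrac1{2n}\bigr)\pr(V_{n-1}\ge d)+\tfrac1{2n}\pr(V_{n-1}\ge d-1).
\]
Uniqueness is then immediate by induction on $n$: row $n$ is determined by row $n-1$ together with the value at $d=0$, and row $1$ is fixed by the boundary conditions.

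The only step needing care is the bookkeeping at the edges. This means checking that the factorization isolates the $m=1$ factor $(1+t)/2$ correctly, and that the index conventions ($\stirlingb{n}{k}=0$ outside $0..n$, $a_{-1}=0$) make the telescoping valid for all $d\ge0$. I do not expect a genuine obstacle beyond this.
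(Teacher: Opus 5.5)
Your proposal is correct and follows essentially the paper's proof. The paper also splits off the factor $(t+1)$ and identifies $\prod_{m=2}^n(t+2m-1)/(2^{n-1}n!)$ with the generating function of $\sum_{m=2}^n U_m$; it then reads the parity-restricted sum as a tail sum and conditions on $U_n$ for the recurrence. The only difference is cosmetic: you normalize first, writing $\frac{2}{2^n n!}\stirlingb{n}{k}=a_k+a_{k-1}$, while the paper compares coefficients of the unnormalized polynomials before normalizing.
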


\begin{proof}
For $n\in \{1,2,\dots\}$ write
\[
    A_n(t):=\prod_{m=2}^{n}(t+2m-1)
    =\sum_{k=0}^{n-1}a_{n,k}t^k,
\]
where the product is empty for $n=1$ and $a_{n,k}=0$ for
$k\notin \{0,\dots,n-1\}$. Since
\[
    \sum_{k=0}^{n}\stirlingb{n}{k}t^k=(t+1)A_n(t),
\]
comparison of coefficients gives, for every $d\geq 0$,
\[
    \sum_{r\geq 0}\stirlingb{n}{d+1+2r}
    =\sum_{k=d}^{n-1}a_{n,k}.
\]
On the other hand,
\[
    \mathbb E\left[t^{\sum_{m=2}^{n}U_m}\right]
    =\prod_{m=2}^{n}\left(\frac {2m-1+t}{2m}\right)
    =\frac {A_n(t)}{2^{n-1}n!}.
\]
Thus~\eqref{eq:def-p-walk-rep} is the tail probability in
\eqref{eq:p-walk-bernoulli}.
Conditioning on $U_n$ yields~\eqref{eq:p-walk-recurrence}. The uniqueness statement is immediate.
\end{proof}

\begin{proposition}[Random-bridge absorption probabilities]
\label{prop:p-br}
Let $V_3,V_4,\dots$ be independent random variables with $V_m\sim\operatorname{Bern}(\frac{1}{m})$, $m\geq 3$.
Then, for $n\geq 2$ and $d\geq 0$,
\begin{equation}
    \pbridge{n}{d}
    =\pr\left(\sum_{m=3}^{n}V_m\geq d\right).
    \label{eq:p-br-bernoulli}
\end{equation}
Consequently, for $n\geq 3$ and $d\geq 1$,
\begin{equation}
    \pbridge{n}{d}
    =\frac{n-1}{n}\pbridge{n-1}{d}
    +\frac{1}{n}\pbridge{n-1}{d-1}.
    \label{eq:p-br-recurrence}
\end{equation}
Together with the boundary conditions  $\pbridge{n}{0}=1$  for all $n\geq 2$ and  $\pbridge{2}{d}=0$  for all $d\geq 1$, this
recurrence uniquely determines the array.
\end{proposition}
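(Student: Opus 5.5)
We follow the proof of Proposition~\ref{prop:p-walk} with $\stirling{n}{k}$ in place of $\stirlingb{n}{k}$. For $n\geq 2$ define
\[
    C_n(x):=\prod_{m=2}^{n-1}(x+m)=\sum_{k\geq 0}c_{n,k}x^k ,
\]
with the empty product equal to $1$ for $n=2$. Then by~\eqref{eq:stirling_generating_func}
\[
    \sum_k\stirling{n}{k}x^k=x(x+1)\,C_n(x).
\]
Comparing coefficients gives $\stirling{n}{k}=c_{n,k-2}+c_{n,k-1}$.

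Summing over $k=d+2+2r$, $r\geq0$, the sum telescopes in the same way as in the walk case:
\[
    \sum_{r\geq0}\stirling{n}{d+2+2r}
    =\sum_{r\geq 0}\bigl(c_{n,d+2r}+c_{n,d+1+2r}\bigr)
    =\sum_{k\geq d}c_{n,k}.
\]
This uses the convention that $c_{n,k}=0$ for $k$ outside $\{0,\dots,n-2\}$.

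For the probability generating function of the Bernoulli sum we compute
\[
    \mathbb E\bigl[t^{\sum_{m=3}^n V_m}\bigr]
    =\prod_{m=3}^{n}\frac{m-1+t}{m}
    =\frac{2\,C_n(t)}{n!},
\]
since $\prod_{m=3}^n m=n!/2$ and $\prod_{m=3}^n(t+m-1)=\prod_{m=2}^{n-1}(t+m)$. Hence $\pr\bigl(\sum_{m=3}^n V_m\geq d\bigr)=\frac{2}{n!}\sum_{k\geq d}c_{n,k}$, which equals~\eqref{eq:def-p-br-rep}. This proves~\eqref{eq:p-br-bernoulli}. It also holds for $d=0$ and $n=2$, where the sum is empty and the formula gives $1$ for $d=0$ and $0$ for $d\geq1$; this is consistent with~\eqref{eq:p-zero}.

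For the recurrence, fix $n\geq 3$ and condition on $V_n$, which is independent of $\sum_{m=3}^{n-1}V_m$:
\[
    \pr\Bigl(\sum_{m=3}^n V_m\geq d\Bigr)=\frac{n-1}{n}\pr\Bigl(\sum_{m=3}^{n-1}V_m\geq d\Bigr)+\frac1n\pr\Bigl(\sum_{m=3}^{n-1}V_m\geq d-1\Bigr).
\]
This is~\eqref{eq:p-br-recurrence}, using the representation at $n-1\geq 2$. Uniqueness follows by induction on $n$ from the boundary values $\pbridge{2}{d}=0$ for $d\geq1$ and $\pbridge{n}{0}=1$.

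The only delicate step is the index bookkeeping. One has to check that the extra factor $x(x+1)$, as opposed to $(t+1)$ in the walk case, shifts indices so that the parity sum collapses to the tail $\sum_{k\geq d}c_{n,k}$. One also has to confirm that the normalisation $2/n!$ matches $\prod_{m=3}^n m$. Everything else is routine.
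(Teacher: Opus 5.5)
Your proof is correct and follows the paper's argument essentially verbatim. It uses the same factorization $\sum_k \stirling{n}{k}x^k = x(x+1)C_n(x)$, where your $\prod_{m=2}^{n-1}(x+m)$ is the paper's $\prod_{m=3}^{n}(x+m-1)$. It then uses the same regrouping of coefficients into the tail $\sum_{k\geq d}c_{n,k}$ (a pairing of consecutive indices rather than a telescoping), the same generating function $2C_n(t)/n!$, and conditioning on $V_n$ for the recurrence.
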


\begin{proof}
For $n\in \{2,3,\dots\}$ put
\[
    C_n(t):=\prod_{m=3}^{n}(t+m-1)
    =\sum_{k=0}^{n-2}c_{n,k}t^k,
\]
where the product is empty for $n=2$ and $c_{n,k}=0$ for
$k\notin \{0,\dots,n-2\}$. The ordinary Stirling generating function gives
\[
    \sum_{k=0}^{n}\stirling{n}{k}t^k=t(t+1)C_n(t),
\]
and hence
\[
    \sum_{r\geq 0}\stirling{n}{d+2+2r}
    =\sum_{k=d}^{n-2}c_{n,k}.
\]
Moreover,
\[
    \mathbb E\left[t^{\sum_{m=3}^{n}V_m}\right]
    =\prod_{m=3}^{n}\left(\frac{m-1+t}{m}\right)
    =\frac{2C_n(t)}{n!}.
\]
This proves~\eqref{eq:p-br-bernoulli}. Conditioning on $V_n$ gives~\eqref{eq:p-br-recurrence}. The uniqueness statement is immediate.
\end{proof}

The Bernoulli representations also connect the random-walk and
random-bridge absorption probabilities to Wendel's formula.
Recall that $K_n$ denotes the number of cycles of a uniformly
distributed random permutation of $\{1,\dots,n\}$,  so that
\[
    \pr(K_n=k)=\frac{1}{n!}\stirling{n}{k},
    \qquad 1\leq k\leq n.
\]
Further, let $K_{n}^{\mathrm{br}}$ be the number of cycles of an Ewens$(2)$
random permutation of $\{1,\dots,n-1\}$. Its distribution is
\[
    \pr\bigl(K_{n}^{\mathrm{br}}=k\bigr)
    =\frac{2^k}{n!}\stirling{n-1}{k},
    \qquad 1\leq k\leq n-1.
\]
The normalization in the last display follows by evaluating the ordinary
Stirling generating polynomial at $2$.

\begin{corollary}[Wendel--Stirling mixtures]
For $n\geq 1$ and $d\geq 0$,
\begin{equation}
    \pwalk{n}{d}
    =\mathbb E\left[\pwend{K_n}{d}\right]
    =\frac{1}{n!}\sum_{k=d+1}^{n}
        \stirling{n}{k}\pwend{k}{d}.
    \label{eq:p-walk-Wendel-mixture}
\end{equation}
For $n\geq 2$ and $d\geq 0$,
\begin{equation}
    \pbridge{n}{d}
    =\mathbb E\left[\pwend{K_{n}^{\mathrm{br}}}{d}\right]
    =\frac{1}{n!}\sum_{k=d+1}^{n-1}
        2^k\stirling{n-1}{k}\pwend{k}{d}.
    \label{eq:p-br-Wendel-mixture}
\end{equation}
\end{corollary}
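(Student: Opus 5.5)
Both identities are finite coefficient identities between explicit polynomials, so the proof will be algebraic. The plan is to rewrite each left-hand side as a tail probability of a sum of independent Bernoulli variables and then use Propositions~\ref{prop:p-walk} and~\ref{prop:p-br}.

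For the walk, start from Wendel's formula in the form given by Corollary~\ref{cor:bernoulli}, namely $\pwend{k}{d}=\pr(\mathrm{Bin}(k-1,\tfrac12)\ge d)$. This formula also covers $k\le d$, where the value is $0$, so the sum can be extended to $k=1,\dots,n$. Conditionally on $K_n=k$, let $W$ be $\mathrm{Bin}(K_n-1,\tfrac12)$. Then
\[
\mathbb E[t^W]=\frac1{n!}\sum_{k}\stirling nk\Bigl(\frac{1+t}{2}\Bigr)^{k-1}.
\]
Use the generating function $\sum_k\stirling nk x^k=x(x+1)\cdots(x+n-1)$ with $x=(1+t)/2$. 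Cancelling one factor of $x$ gives
\[
\mathbb E[t^W]=\frac{1}{n!}\prod_{j=1}^{n-1}\Bigl(\frac{1+t}{2}+j\Bigr)=\prod_{m=2}^{n}\frac{t+2m-1}{2m}.
\]
This is exactly the generating function of $\sum_{m=2}^nU_m$ computed in the proof of Proposition~\ref{prop:p-walk}. Hence $W\stackrel d=\sum_{m=2}^n U_m$, and taking $\pr(\cdot\ge d)$ of both sides gives~\eqref{eq:p-walk-Wendel-mixture} by~\eqref{eq:p-walk-bernoulli}. The terms with $k\le d$ vanish, which explains the lower summation limit $d+1$.

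For the bridge, the same argument applies with the weights $2^k\stirling{n-1}{k}/n!$. The generating function becomes
\[
\frac{1}{n!}\sum_k\stirling{n-1}k 2^k x^{k-1}=\frac{2}{n!}\prod_{j=1}^{n-2}(2x+j)
\]
with $x=(1+t)/2$, so $2x+j=t+1+j$. The result is $\frac{2}{n!}\prod_{m=3}^{n}(t+m-1)=\frac{2C_n(t)}{n!}$, which matches the generating function of $\sum_{m=3}^nV_m$ in Proposition~\ref{prop:p-br}. Then~\eqref{eq:p-br-bernoulli} finishes the proof. As a check, the normalization stated before the corollary is recovered at $t=1$.

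The main obstacle is only bookkeeping. One must check that $\pwend{k}{d}$ agrees with the binomial tail for all $k\ge1$, including $k\le d$ and the case $d=0$, where both sides equal $1$ by~\eqref{eq:p-zero}. One must also track the index shift from cancelling the factor $x$. Alternatively, the walk identity was already obtained geometrically via Proposition~\ref{prop:lacunary} and its corollary, and the bridge identity via the two-sided record count in Section~\ref{sec:bridge}. However, the generating-function route above is self-contained given Section~\ref{sec:absorption_properties}.
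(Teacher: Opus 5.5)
Your proof is correct. Its skeleton is the same as the paper's: write $\pwend{k}{d}=\pr(\mathrm{Bin}(k-1,\tfrac12)\geq d)$, read the right-hand side as $\pr(W\geq d)$ for the mixture $W\sim\mathrm{Bin}(K-1,\tfrac12)$, identify the law of $W$ with that of $\sum_{m=2}^n U_m$ (resp.\ $\sum_{m=3}^n V_m$), and conclude with Propositions~\ref{prop:p-walk} and~\ref{prop:p-br}. The difference is how you prove the distributional identity.

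You prove it by a generating-function computation: substitute $x=(1+t)/2$ into the Stirling generating polynomial and cancel one factor of $x$. The computations check out, including the bridge case with $2x+j=t+1+j$ and the boundary cases $n=1$, $n=2$, $d=0$.

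The paper proves it by a coupling. In the sequential construction of a uniform (resp.\ Ewens$(2)$) permutation, the new-cycle indicators are independent $\operatorname{Bern}(1/m)$ (resp.\ $\operatorname{Bern}(2/(i+1))$). Marking every cycle except the one containing $1$ independently with probability $\tfrac12$ gives two things at once: the conditional law $\mathrm{Bin}(k-1,\tfrac12)$ of the number of marked cycles given $K=k$, and independent indicators with parameters $1/(2m)$ (resp.\ $1/(i+1)$).

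Your route is self-contained and uses only generating functions already displayed in the paper. The paper's route relies on standard but unproved facts about the sequential (Chinese-restaurant) construction. In exchange, the coupling exhibits $U_m$ literally as the indicator that $m$ opens a new marked cycle. This mirrors the geometric picture of Sections~\ref{sec:convex_hulls_random_walks} and~\ref{sec:insertion}, with records as Wendel points and signs as marks.

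One small simplification: the binomial-tail form of $\pwend{k}{d}$, including its vanishing for $k\leq d$, is just the definition~\eqref{eq:def-p-Wend-rep}. You therefore do not need Corollary~\ref{cor:bernoulli}, whose derivation goes through the geometric insertion argument. That keeps your argument within the purely algebraic framework of Section~\ref{sec:absorption_properties}.
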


\begin{proof}
In a uniform random permutation, distinguish the cycle containing $1$
and mark each remaining cycle independently with probability $1/2$.
Conditional on $K_n=k$, the number $M_n$ of marked cycles has distribution
$\operatorname{Bin}(k-1,1/2)$, and therefore
\[
    \pr(M_n\geq d\mid K_n=k)=\pwend{k}{d}.
\]
In the standard sequential construction of a uniform permutation, the
indicators that $m$ starts a new cycle are independent
$\operatorname{Bern}(1/m)$ random variables. After marking, the indicators
for $m\geq 2$ are independent $\operatorname{Bern}(1/(2m))$ random
variables. Proposition~\ref{prop:p-walk} now gives
\eqref{eq:p-walk-Wendel-mixture}.

For an Ewens$(2)$ permutation, the new-cycle probability at step $i$ is
$2/(i+1)$. Marking every cycle other than the one containing $1$ with
probability $1/2$ therefore produces independent indicators with success
probabilities $1/(i+1)$, $2\leq i\leq n-1$. Their sum has the same law as
$\sum_{m=3}^{n}V_m$. Conditional on $K_{n}^{\mathrm{br}}=k$, the number of marked
cycles is again $\operatorname{Bin}(k-1,1/2)$. Proposition~\ref{prop:p-br}
therefore yields~\eqref{eq:p-br-Wendel-mixture}.
\end{proof}

\appendix
\section{Convex geometry} \label{convex_geometry}

In this appendix, we collect some standard facts from convex geometry. We say that vectors $q_1,\dots,q_N\in\R^d$
are in general linear position if $(q_i)_{i\in I}$ is linearly
independent for every $I\subseteq[N]$ with $1\leq |I|\leq d$.

\begin{lemma} \label{lem:separation}
    Let $N,d\geq1$ and  $q_1, \dots, q_N \in \R^d$.
    \begin{itemize}
        \item[(i)] $0 \notin \Conv(q_1, \dots, q_N)$ if and only if
        there is $v \in \R^d$ with $\langle q_m, v \rangle > 0$ for all
        $m$.
        \item[(ii)] $0 \in \Int \Conv(q_1, \dots, q_N)$ if and only if
        there is \emph{no} $v \neq 0$ with $\langle q_m, v \rangle \ge
        0$ for all $m$.
        \item[(iii)] If $q_1, \dots, q_N$ are in general linear position, then
        either $0 \notin \Conv(q_1, \dots, q_N)$ or $0 \in \Int
        \Conv(q_1, \dots, q_N)$.
        \item[(iv)] Let $N \ge d+1$ and $0 \in \partial \Conv(q_1, \dots, q_N)$.
        If there is exactly one $d$-element set $I\subseteq [N]$ such that
        the vectors $(q_i)_{i \in I}$ are linearly dependent, then the points $q_i$, $i \in I$,
        are affinely independent,
        $0 \in \relint \Conv(q_i : i \in I)$, and there is a unique unit
        vector $\nu$ such that $$\langle q_i, \nu \rangle = 0 \text{ for } i \in I
        \quad
        \text{and}
        \quad
        \langle q_m, \nu \rangle > 0 \text{ for }\, m \notin I.$$

    \end{itemize}
\end{lemma}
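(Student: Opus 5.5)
The plan is to prove the four items in order, using only the separating hyperplane theorem and Carathéodory's theorem in its conic form. Throughout, write $P:=\Conv(q_1,\dots,q_N)$.

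For (i), if $0\notin P$, strict separation of the compact convex set $P$ from the point $0$ gives $v$ with $\langle q,v\rangle>0$ on $P$, and in particular at every $q_m$. Conversely, such a $v$ is positive on all convex combinations of the $q_m$, so $0\notin P$.

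For (ii), if $0\notin\Int P$, then $0$ is either outside $P$ or on $\partial P$. In both cases a (weakly) separating or supporting hyperplane through $0$ yields $v\neq0$ with $\langle q_m,v\rangle\ge0$ for all $m$. Conversely, if such a $v\neq0$ exists, then $P$ lies in the closed half-space $\{\langle\cdot,v\rangle\ge0\}$, so the points $-\eta v$ with $\eta>0$ lie outside $P$ and $0$ is not interior.

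For (iii), assume $0\in\partial P$. By (ii) there is $v\neq0$ with $\langle q_m,v\rangle\ge0$ for all $m$. Put $J:=\{m:\langle q_m,v\rangle=0\}$. Since $0\in P$, write $0=\sum_m\lambda_mq_m$ as a convex combination; pairing with $v$ shows that $\lambda_m=0$ for $m\notin J$. Hence $0\in\Conv(q_j:j\in J)$, and all these points lie in the $(d-1)$-dimensional subspace $v^\perp$. By conic Carathéodory in $v^\perp$, $0$ is a nontrivial nonnegative combination of at most $d$ linearly independent vectors among the $q_j$. That is impossible, since $0$ cannot be a nontrivial combination of linearly independent vectors. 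This contradicts general position, so $0\notin\partial P$.

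For (iv), run the same argument. Now $J$ must contain a linearly dependent subfamily of size at most $d$. First extend any dependent set of size at most $d$ to a dependent set of size exactly $d$ (padding with further indices, which keeps it dependent). Uniqueness of the dependent $d$-set then forces:
\begin{itemize}
\item every dependent subfamily to lie inside $I$;
\item all proper subfamilies of $I$ to be independent, since if $|I'|<d$ and $I'$ were dependent, it could be padded in two different ways when $N\ge d+1$.
\end{itemize}
So $(q_i)_{i\in I}$ is a minimal dependent set. It has a unique (up to scaling) relation $\sum_{i\in I}a_iq_i=0$ with all $a_i\neq0$.

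Next I show $J=I$. The relation expressing $0\in\Conv(q_j:j\in J)$ involves a minimal dependent subset; by the above this subset is $I$ itself, and its coefficients are the unique relation up to scaling. Being nonnegative, they are all positive, which gives $0\in\relint\Conv(q_i:i\in I)$. The same uniqueness shows the $q_i$, $i\in I$, are affinely independent: an affine relation would combine with the linear one to give a second, independent linear relation.

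Their span is therefore a hyperplane ($d-1$ independent vectors span it, and $q_i$ is in the span of the rest). So $v$ is determined up to a positive factor by $v\perp q_i$ for $i\in I$, which gives uniqueness of $\nu=v/\|v\|$. Finally, if some $m\notin I$ had $\langle q_m,\nu\rangle=0$, then $q_m$ together with $d-1$ of the $q_i$ would be $d$ vectors in a $(d-1)$-dimensional space. They would be dependent, giving a second dependent $d$-set and contradicting uniqueness. Hence $\langle q_m,\nu\rangle>0$ for $m\notin I$.

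The main obstacle is this combinatorial bookkeeping in (iv): the padding argument, which needs $N\ge d+1$, and the deduction that the convex relation coincides with the unique minimal relation on $I$. I would write these steps carefully.
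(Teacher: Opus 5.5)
\textbf{A step in (iii) needs repair.} As written, the key step of (iii) does not work. Conic Carath\'eodory applied to the point $0$ only returns the trivial (empty) representation. So it cannot produce a \emph{nontrivial} nonnegative combination of linearly independent vectors equal to $0$.

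If your sentence were valid, it would rule out $0\in\partial P$ for every configuration, without using general position at all. For example, take $q_1=e_1$, $q_2=-e_1$ in $\R^2$: here $0\in\partial P$, yet the conclusion of your sentence is false.

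The correct tool is the convex Carath\'eodory theorem in the $(d-1)$-dimensional space $v^\perp$. It writes $0$ as a convex combination of at most $d$ of the $q_j$, $j\in J$, and a convex combination is a nontrivial linear relation. It is general position, not Carath\'eodory, that makes these vectors independent and yields the contradiction.

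The paper avoids Carath\'eodory entirely. The support $S$ of the convex combination is a dependent family. If $|S|\le d$, this contradicts general position directly. If $|S|>d$, any $d$ of the $q_j$, $j\in S$, lie in $v^\perp$ and are therefore dependent.

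\textbf{Comparison for (iv).} With that repair, your (iv) is correct but organized differently from the paper. You identify $I$ as a circuit by padding, use that its relation space is one-dimensional to get positive coefficients and affine independence, and obtain $J\subseteq I$ at the end.

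The paper instead squeezes $d\le|S|\le|Z|\le d$, where $S$ is the support of the convex combination and $Z$ is the zero set of $v$. Padding gives $|S|\ge d$, and ``any $d$ vectors in $v^\perp$ are dependent'' gives $|Z|\le d$, so $S=Z=I$ at once. Positivity on $I$ and strict positivity off $I$ are then immediate, and affine independence follows from a dimension count. The paper's route is shorter and needs neither Carath\'eodory nor circuits. Yours additionally exhibits the relation on $I$ as unique up to scaling.

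\textbf{Tighten the order of deductions in (iv).} Identifying the convex coefficients with that unique relation requires the support to be exactly $I$.
\begin{itemize}
\item For the Carath\'eodory support $D$ this holds: $|D|\le d$ and $D$ is dependent, so $D\subseteq I$ by padding and $D=I$ by minimality.
\item For the full support it holds only after your final paragraph proves $J\subseteq I$.
\end{itemize}
Also, orthogonality to the $q_i$ fixes $v$ only up to a nonzero factor. The sign of $\nu$ comes from positivity at some $m\notin I$, which exists since $N\ge d+1$.
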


\begin{proof}
Parts (i) and (ii) are standard consequences of the separating
hyperplane theorem. For (iii), suppose
$$0 \in \Conv(q_1,\dots,q_N) \setminus \Int\Conv(q_1,\dots,q_N).$$ By
(ii) there is $v \neq 0$ with $\langle q_m, v\rangle \ge 0$ for all
$m$; write $0 = \sum_m \alpha_m q_m$ with $\alpha_m \ge 0$,
$\sum_m \alpha_m = 1$. Pairing with $v$ forces $\langle q_m, v\rangle =
0$ for every $m$ in the support $S := \{m \mid \alpha_m > 0\}$, so the
vectors $(q_m)_{m \in S}$ lie in the hyperplane $v^\perp$. If
$|S| \le d$ they are linearly independent by general linear position,
contradicting the nontrivial relation $\sum_{m \in S} \alpha_m q_m =
0$; if $|S| > d$, any $d$ of them lie in the $(d-1)$-dimensional space
$v^\perp$ and are dependent, again a contradiction.

For (iv), write $0 = \sum_m \alpha_m q_m$ with $\alpha_m \ge 0$,
$\sum_m \alpha_m = 1$. By (ii) there is $v \neq 0$ with
$\langle q_m, v \rangle \ge 0$ for all $m$; we may assume $|v| = 1$. Pairing
the relation $0 = \sum_m \alpha_m q_m$ with $v$ shows that the set
$Z := \{m \mid \langle q_m, v \rangle = 0\}$ contains the support
$S := \{m \mid \alpha_m > 0\}$, which is nonempty since the $\alpha_m$ sum to
one.

The vectors $(q_m)_{m \in Z}$ lie in the hyperplane $v^\perp$, so any $d$ of
them are linearly dependent; hence $|Z| \le d$, for otherwise $Z$ would
contain two distinct $d$-sets of linearly dependent vectors, contradicting
the uniqueness of $I$. On the other hand, the vectors $(q_m)_{m \in S}$ are
linearly dependent as well, and if $|S| \le d-1$, then every $d$-set
containing $S$ would inherit this dependence; since $N \ge d+1$, there are
at least two distinct such $d$-sets, again contradicting the uniqueness of
$I$. Hence $d \le |S| \le |Z| \le d$, so $S = Z$ is a $d$-element set whose
vectors are linearly dependent, and thus $S = Z = I$. This proves the claims
about $\nu := v$; moreover, $0 = \sum_{i \in I} \alpha_i q_i$ with all
$\alpha_i > 0$.

If the points $q_i$, $i \in I$, were affinely dependent, their affine hull
--- a linear subspace, as it contains the origin --- would have dimension at
most $d-2$; then $d-1$ of the vectors $(q_i)_{i \in I}$ together with any
$q_m$, $m \notin I$, would form a second $d$-set of linearly dependent
vectors. So these points are affinely independent, and the representation of
the origin with strictly positive coefficients $\alpha_i$ means precisely
that $0 \in \relint \Conv(q_i : i \in I)$. Finally, the affine hull of
these points is a linear hyperplane, necessarily equal to $v^\perp$; a unit
vector as in (iv) is orthogonal to it, so it equals $\pm v$, and the sign is
fixed by the strict inequalities. This proves the uniqueness of $\nu$.
\end{proof}

\begin{lemma}[Stability of absorption]\label{lem:stability}
Let $q_1, \dots, q_N \in \R^d$ be such that
$0 \notin \partial \Conv(q_1, \dots, q_N)$. Then there is $\delta > 0$
such that every tuple $q'_1, \dots, q'_N \in \R^d$ with
$|q'_m - q_m| < \delta$ for all $m$ satisfies
$$
0 \in \Conv(q'_1, \dots, q'_N)
\iff
0 \in \Conv(q_1, \dots, q_N).
$$
\end{lemma}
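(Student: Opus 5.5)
The plan is to split according to whether the origin lies outside $P:=\Conv(q_1,\dots,q_N)$ or in its interior; the hypothesis $0\notin\partial P$ leaves no other case. In each case I will produce a strict, quantitative certificate at the unperturbed tuple and show that it survives perturbations of size $\delta$.

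\emph{Case $0\notin P$.} By Lemma~\ref{lem:separation}(i) there is $v\in\R^d$ with $\langle q_m,v\rangle>0$ for all $m$. Normalize so that $|v|=1$ and set $c:=\min_m\langle q_m,v\rangle>0$. Take $\delta:=c$. If $|q'_m-q_m|<\delta$, then by Cauchy--Schwarz
\[
\langle q'_m,v\rangle\geq c-|q'_m-q_m|>0
\]
for every $m$. Lemma~\ref{lem:separation}(i) then gives $0\notin\Conv(q'_1,\dots,q'_N)$, as required.

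\emph{Case $0\in\Int P$.} First I would show that there is $r>0$ such that
\[
\max_m\langle q_m,v\rangle\geq r\quad\text{for every unit vector }v .
\]
The function $v\mapsto\max_m\langle q_m,v\rangle$ is continuous on the compact unit sphere, so it attains its minimum. That minimum is positive: otherwise some unit $v$ would satisfy $\langle q_m,v\rangle\le 0$ for all $m$, so $\langle q_m,-v\rangle\ge0$ for all $m$, which contradicts Lemma~\ref{lem:separation}(ii).

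Now take $\delta:=r$ and a perturbed tuple with $|q'_m-q_m|<\delta$. For every unit $v$,
\[
\max_m\langle q'_m,v\rangle>r-\delta=0 .
\]
Hence no nonzero $v$ has $\langle q'_m,v\rangle\le0$ for all $m$, and equivalently no nonzero $v$ has $\langle q'_m,v\rangle\geq0$ for all $m$ (replace $v$ by $-v$). By Lemma~\ref{lem:separation}(ii), $0\in\Int\Conv(q'_1,\dots,q'_N)$, and in particular $0\in\Conv(q'_1,\dots,q'_N)$.

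Combining the two cases, both sides of the equivalence agree for every admissible perturbed tuple; one may take $\delta=c$ or $\delta=r$ according to the case. The only step that needs care is the interior case, where the certificate is a uniform margin over all directions rather than a single separating vector. The compactness argument on the sphere supplies that uniformity, so I do not expect a real obstacle.
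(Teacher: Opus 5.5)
Your proof is correct and follows essentially the same route as the paper. Both split into the same two cases via Lemma~\ref{lem:separation}(i) and (ii), and both handle the interior case with a min--max quantity over the unit sphere; your $r$ is exactly $-\phi(q_1,\dots,q_N)$ for the paper's $\phi$. The only difference is that you make the continuity explicit, obtaining the concrete radii $\delta=c$ and $\delta=r$ from the $1$-Lipschitz dependence on the points, whereas the paper simply invokes continuity of $\phi$.
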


\begin{proof}
If $0 \notin \Conv(q_1, \dots, q_N)$, then
Lemma~\ref{lem:separation}(i) provides $v \in \R^d$ with
$\langle q_m, v \rangle > 0$ for all $m$; these finitely many strict
inequalities persist under small perturbations of the tuple, so
$0 \notin \Conv(q'_1, \dots, q'_N)$, again by
Lemma~\ref{lem:separation}(i).

Otherwise $0 \in \Int \Conv(q_1, \dots, q_N)$. Consider
$$
\phi(q_1, \dots, q_N) :=
\max_{v \in \R^d,\ |v| = 1}\ \min_{1 \le m \le N}\
\langle q_m, v \rangle,
$$
a continuous function of the tuple, being a maximum of continuous
functions over the compact unit sphere. By
Lemma~\ref{lem:separation}(ii), $0 \in \Int \Conv(q_1, \dots, q_N)$ if
and only if $\phi(q_1, \dots, q_N) < 0$, and this condition persists
under small perturbations as well.
\end{proof}

\section{Existence of the path}\label{sec:paths_existence}
In this appendix we collect several lemmas about existence of paths in Euclidean space. The path is required to connect two given points and is allowed to hit certain codimension $1$ sets, called \emph{walls},  finitely many times, but is not allowed to hit intersections of two walls.

\subsection{Simplest setting: walls are hyperplanes}
The following result is needed in the proof of Sparre Andersen's formula in Section~\ref{sec:Sparre-Andersen}.

\begin{lemma}\label{lem:path_for_sparre_andersen}
Let $\mathcal L$ be a finite family of pairwise distinct affine hyperplanes in
$\mathbb R^n$, and let $x,y\in\mathbb R^n\setminus\bigcup_{H\in\mathcal L}H$.
Then there exists a piecewise linear path $\gamma\colon[-1,1]\to\mathbb R^n$ with
$\gamma(-1)=x$ and $\gamma(1)=y$ which meets $\bigcup_{H\in\mathcal L}H$ at
finitely many times $t_1 < \dots< t_L$ only,  and such that $\gamma(t_i)$
lies on exactly one hyperplane of $\mathcal L$ for every $i\in\{1,\dots,L\}$.
\end{lemma}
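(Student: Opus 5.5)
The natural approach is a general-position perturbation argument: start from the straight segment from $x$ to $y$ and wiggle it through one extra vertex. The pairwise intersections $H'\cap H''$ of distinct hyperplanes in $\mathcal L$ are affine subspaces of dimension at most $n-2$, or empty. The segment may hit some of these, or may even lie inside a hyperplane. So I will pass through an intermediate point $z$ and take $\gamma$ to be the concatenation of the segments $[x,z]$ and $[z,y]$. I will reparametrize it linearly so that $\gamma(-1)=x$, $\gamma(0)=z$, $\gamma(1)=y$.

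\emph{Step 1 (choice of $z$).} First, $z$ should avoid every hyperplane of $\mathcal L$. This excludes a finite union of hyperplanes, which is a null set.

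Second, the segment $[x,z]$ must not meet any double intersection $H'\cap H''$ with $H'\neq H''$. The set of $z$ for which $[x,z]$ meets a given affine subspace $A$ of dimension at most $n-2$ is contained in the cone $\{x+s(a-x): a\in A,\ s\geq 1\}$. This cone lies inside the affine span of $x$ and $A$, which has dimension at most $n-1$. Since $x\notin A$ (because $x$ lies on no hyperplane), this span is a proper affine subspace, hence a null set. The same argument applies to $[z,y]$. There are finitely many pairs $(H',H'')$, so the set of forbidden $z$ is a finite union of null sets. Any $z$ outside it will do.

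\emph{Step 2 (finitely many crossings).} Each segment meets each hyperplane $H\in\mathcal L$ in a set that is empty, a single point, or the whole segment. The last case is impossible, since the endpoints $x,y,z$ lie on no hyperplane of $\mathcal L$. So each segment meets $\bigcup\mathcal L$ in finitely many points, and none of these is an endpoint. Hence the crossing times $t_1<\dots<t_L$ lie in $(-1,1)\setminus\{0\}$ and are finite in number.

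\emph{Step 3 (each crossing point lies on exactly one hyperplane).} If $\gamma(t_i)$ lay on two distinct hyperplanes, it would lie in some $H'\cap H''$, which Step 1 excluded.

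The only mildly delicate point is the dimension count in Step 1. One must note that the union of lines through $x$ and points of an affine subspace $A$ of dimension $k\le n-2$ lies in an affine subspace of dimension $k+1\le n-1$. I expect this to be the main obstacle, though it is routine.
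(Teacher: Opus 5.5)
Your proposal is correct and matches the paper's proof essentially step for step. The paper likewise picks $z$ outside the hyperplanes of $\mathcal L$ and outside the hyperplanes $\operatorname{aff}(\{x\}\cup F)$ and $\operatorname{aff}(\{y\}\cup F)$ for each double intersection $F=H\cap H'$. It then argues exactly as in your Steps 2 and 3: a segment with endpoints off $H$ meets $H$ at most once, and a double crossing would force $z$ into one of the excluded affine hulls.
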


\begin{proof}
Let $\mathcal F$ be the (finite) family of all nonempty intersections $H\cap H'$
with $H,H'\in\mathcal L$, $H\neq H'$; each $F\in\mathcal F$ is an affine subspace
of dimension $n-2$. Since $x\notin F$ for every $F\in\mathcal F$, the
affine hull $\operatorname{aff}(\{x\}\cup F)$ is an affine hyperplane, and similarly for
$y$. Hence
\[
  E:=\bigcup_{H\in\mathcal L}H\;\cup\;
  \bigcup_{F\in\mathcal F}\bigl(\operatorname{aff}(\{x\}\cup F)\cup
  \operatorname{aff}(\{y\}\cup F)\bigr)
\]
is a finite union of hyperplanes and therefore a Lebesgue null set. Fix
$z\in\mathbb R^n\setminus E$ and let $\gamma$ be a piecewise linear path which traverses $[x,z]$ and then $[z,y]$.

Fix $H\in\mathcal L$. A segment meeting the affine set $H$ in two distinct points
is contained in $H$; since $x,y,z\notin H$, each of $[x,z]$ and $[z,y]$ meets $H$
in at most one point. As $\mathcal L$ is finite, $\gamma$ meets
$\bigcup_{H\in\mathcal L}H$ at finitely many parameters, none of which is $-1$,
$0$ or $1$.

Suppose $p=\gamma(t)$ lies on two distinct $H,H'\in\mathcal L$, say
$p\in[x,z]$. Then $F:=H\cap H'\in\mathcal F$ and $p\neq x$, so that
$z\in\operatorname{aff}\{x,p\}\subseteq\operatorname{aff}(\{x\}\cup F)\subseteq E$,
contradicting the choice of $z$; the case $p\in[z,y]$ is symmetric. Thus every
point at which $\gamma$ meets $\bigcup_{H\in\mathcal L}H$ lies on exactly one
hyperplane of $\mathcal L$, and listing the corresponding parameters in
increasing order gives the assertion.
\end{proof}
\subsection{Walls as determinantal varieties}
The following result is needed in the proof of Wendel's formula, Section~\ref{sec:wendel}. Let $d,n\geq 1$. The space of configurations is now $\R^{d\times n}$.  For a $d$-element
set $I = \{i_1,\dots,i_d\} \subseteq [n]$ define the \emph{wall}
$$
D_I := \{X = (x_1,\dots,x_n) \in \R^{d\times n} \mid
\text{the vectors } x_{i_1},\dots,x_{i_d} \text{ are linearly dependent}\}.
$$
As in Section~\ref{sec:wendel}, we call a configuration $X \in \R^{d\times n}$ \emph{regular}
if it belongs to no wall, that is, if every $d$ of its columns are linearly
independent.

\begin{proposition} \label{prop:pathexist_for_wendel}
Let $X, Y \in \R^{d\times n}$ be regular configurations. Then there exists a
piecewise linear path $\gamma\colon [-1,1] \to \R^{d\times n}$ with
$\gamma(-1) = X$ and $\gamma(1) = Y$ which meets $\bigcup_{|I|=d} D_I$ at
finitely many times $t_1 < \dots < t_L$ only, and such that $\gamma(t_\ell)$
lies on exactly one wall $D_I$ for every $\ell \in \{1,\dots,L\}$.
\end{proposition}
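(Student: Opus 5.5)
The plan is to imitate the proof of Lemma~\ref{lem:path_for_sparre_andersen}, with hyperplanes replaced by the determinantal hypersurfaces $D_I$. Let $W:=\bigcup_{|I|=d}D_I$. Each $D_I$ is the zero set of the polynomial $f_I(X):=\det(x_{i_1}|\dots|x_{i_d})$. This polynomial is irreducible, and distinct $d$-sets give non-proportional polynomials. Consequently $D_I\cap D_J$ with $I\neq J$ is a real algebraic set of dimension at most $dn-2$. I will check this directly: on $D_I\cap D_J$, fix a column index $j\in J\setminus I$. Generic points of $D_I$ have $x_j$ free, and forcing $x_j$ into the span of the other columns of $J$ imposes one more independent condition. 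The cleanest way is to parametrize the smooth part of $D_I$ as ``$x_{i_d}$ lies in the span of the other $x_i$, $i\in I$'', and to see that $f_J$ is not identically zero on any component of $D_I$.

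I will use piecewise linear paths of the form $X\to Z\to Y$ with a single intermediate point $Z$. I then need two properties.

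\emph{(a) Finitely many wall hits.} Along a segment $X+s(Z-X)$, each $f_I$ restricts to a polynomial in $s$ of degree at most $d$. This polynomial is nonzero because $f_I(X)\neq0$. It therefore has finitely many roots, so the segment meets each wall finitely often, and never at its endpoints provided $Z$ is regular.

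\emph{(b) No double degeneracies.} I must choose $Z$ outside the set $E_X:=\{X+\lambda(P-X):\lambda\geq 0,\ P\in D_I\cap D_J\}$, the cone over the codimension-$2$ sets with apex $X$, and similarly outside $E_Y$.

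Since $D_I\cap D_J$ is semialgebraic of dimension at most $dn-2$, the image of $(\lambda,P)\mapsto X+\lambda(P-X)$ is semialgebraic of dimension at most $dn-1$. Hence it is Lebesgue-null. So $W\cup\bigcup_{I\neq J}(E_X\cup E_Y)$ is null, and I can pick $Z$ outside it. If a point $P=X+s(Z-X)$ with $s\in(0,1]$ lay on two walls, then $Z=X+s^{-1}(P-X)\in E_X$, which is a contradiction. The segment $[Z,Y]$ is handled symmetrically. Finally, I list the finitely many hitting times and reparametrize to $[-1,1]$.

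The main obstacle is the codimension-$2$ claim for $D_I\cap D_J$. To avoid real-algebraic-geometry subtleties, I would instead use a Fubini-type counting argument. Write $Z-X=W'$ and view the bad condition as requiring that some $s$ satisfy $f_I(X+sW')=f_J(X+sW')=0$, i.e. that the two univariate polynomials in $s$ share a root. That happens exactly when their resultant in $s$, which is a polynomial in $W'$, vanishes. It then suffices to show this resultant is not identically zero. I would verify that by exhibiting a single direction $W'$ for which the root sets differ. For example, I would choose $W'$ moving only a column $x_j$ with $j\in J\setminus I$. Then $f_I$ is constant and nonzero along the segment, so there is no common root and the resultant is nonzero. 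A nonzero polynomial has a null zero set, so almost every $Z$ works, which finishes the proof.
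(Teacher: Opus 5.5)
Your architecture is sound, and it is the route the paper takes in its more general Proposition~\ref{prop:pathexists_general}: a two-segment path $X\to Z\to Y$, with $Z$ chosen off the cones with apexes $X$ and $Y$ over the doubly degenerate set, and finitely many wall hits because each $s\mapsto f_I(X+s(Z-X))$ is a nonzero polynomial. The whole proof rests on those cones being null. \textbf{Here the step you choose to rely on fails.}

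To get a polynomial in $W'$, you must take the Sylvester resultant of $p(s)=f_I(X+sW')$ and $q(s)=f_J(X+sW')$ with the fixed formal degrees $(d,d)$. Its leading coefficients are $f_I(W')$ and $f_J(W')$. It vanishes when $p,q$ share a finite root, and also when both leading coefficients vanish. The latter is a common root at $s=\infty$: the direction $W'$ itself lies in $D_I\cap D_J$. Your test direction moves only $x_j$, so $W'$ has a single nonzero column. For $d\ge 2$ this gives $f_I(W')=f_J(W')=0$. The first column of the Sylvester matrix is then zero, and the resultant vanishes at this $W'$ even though $p\equiv f_I(X)\neq0$ has no roots at all. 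So the test point certifies nothing except when $d=1$.

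Also, ``the root sets differ'' is not the relevant criterion. What you need is that the binary forms $f_I(uX+sW')$ and $f_J(uX+sW')$ have no common zero on $\mathbb P^1(\mathbb C)$. The claim $R\not\equiv 0$ is true, but proving it this way requires a $W'$ with $f_I(W')\neq0$ and control of all $d$ complex roots of $p$. One way:
\begin{itemize}
\item take $W'_I=X_IM$ with $M$ real symmetric, with simple eigenvalues and eigenvectors of full support, so that at each root the columns indexed by $I\cap J$ remain independent;
\item then choose the columns $w'_j$, $j\in J\setminus I$, generically.
\end{itemize}
That is a genuinely additional argument that your proposal does not supply.

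Your first route (codimension two via irreducibility of $\det$) can be made rigorous, but only by importing real algebraic geometry. You would need three facts: the real common zero set of two coprime polynomials has dimension at most that of the complex intersection; semialgebraic images do not raise dimension; and semialgebraic sets of dimension $<dn$ are null. Your ``direct check'' is only a sketch of this.

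The paper avoids all of it. For the present proposition it moves one column at a time. With the other columns $w_i$ fixed, the moving column creates a wall exactly when it enters one of the finitely many pairwise distinct hyperplanes $\mathrm{span}\{w_i: i\in K\}$, $|K|=d-1$. So Lemma~\ref{lem:path_for_sparre_andersen} applies directly. Regular intermediate configurations differing in one column are obtained by choosing each new column off finitely many hyperplanes.

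In Proposition~\ref{prop:pathexists_general}, the codimension-two claim is made elementary by an explicit covering. The set $\{\det W_I=\det W_J=0\}$ is covered by finitely many $C^\infty$ images of open subsets of $\R^{dn-2}$. This is done by solving the two minor equations, via cofactor expansion, for one entry of a column $a\in I\setminus J$ and one entry of a column $b\in J\setminus I$. The cone is then a $C^1$ image of a $(dn-1)$-dimensional domain, hence null by Lemma~\ref{lem:null_images}. Either device would close your gap.
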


\begin{proof}
If $n < d$, there are no walls and the straight segment from $X$ to $Y$ has the
required properties, so we assume $n \geq d$.

\medskip
\emph{Step 1: $X$ and $Y$ differ in one column.} Suppose first that $X$ and $Y$
differ in the $j$-th column only. Denote the $j$-th columns of $X$ and $Y$ by
$a$ and $b$, respectively, and let $w_i$, $i \neq j$, be the common
remaining columns. We keep these fixed and move the $j$-th column from $a$ to
$b$ as follows.

For every $(d-1)$-element set $K \subseteq [n] \setminus \{j\}$ put
$H_K := \operatorname{span}\{w_i \mid i \in K\}$. Each $H_K$ is a linear
hyperplane in $\R^d$: the $d-1$ vectors spanning it are columns of the regular
configuration $X$ and hence linearly independent. The hyperplanes $H_K$ are
pairwise distinct: if $H_K = H_{K'}$ for some $K \neq K'$, then all the columns of $X$ indexed by $K\cup K'$, of which
there are at least $d$, would lie in a $(d-1)$-dimensional
subspace, so some $d$ of them would be linearly dependent, contradicting the
regularity of $X$. Finally, $a \notin H_K$ for every $K$, since otherwise the
$d$ columns of $X$ indexed by $\{j\} \cup K$ would be linearly dependent;
for the same reason, $b \notin H_K$.

By Lemma~\ref{lem:path_for_sparre_andersen} applied to the family
$\{H_K\}$, there is a piecewise linear path
$\eta\colon [-1,1] \to \R^d$ from $a$ to $b$ which meets $\bigcup_K H_K$ at
finitely many times only, and at each of these times $\eta(t)$ lies on exactly
one hyperplane $H_K$. Let $\gamma(t)$ be the configuration with $j$-th column
$\eta(t)$ and the other columns $w_i$; this is a piecewise linear path from
$X$ to $Y$. Any $d$ columns of $\gamma(t)$ not containing the $j$-th one are
columns of $X$ and hence linearly independent for all $t$, while the $d$
columns indexed by $\{j\} \cup K$ are linearly dependent if and only if
$\eta(t) \in H_K$. In other words, $\gamma(t) \in D_{\{j\} \cup K}$ if and only
if $\eta(t) \in H_K$. Hence $\gamma$ meets the walls only at the finitely many
times at which $\eta$ meets $\bigcup_K H_K$, and at each such time $\gamma(t)$
lies on exactly one wall.

\medskip
\emph{Step 2: the general case.} It suffices to connect $X$ and $Y$ by a finite
chain of regular configurations in which any two consecutive ones differ in
exactly one column: applying Step 1 to each consecutive pair, concatenating the
resulting paths and reparametrizing over $[-1,1]$ then yields the required
path $\gamma$.

We construct vectors $z_1,\dots,z_n \in \R^d$ inductively so that all
the configurations
\begin{align*}
P_k &:= (z_1,\dots,z_k,\, x_{k+1},\dots,x_n),
\qquad 0 \leq k \leq n,
\\
Q_k &:= (z_1,\dots,z_k,\, y_{k+1},\dots,y_n),
\qquad 0 \leq k \leq n,
\end{align*}
 are regular. This will finish the proof, since
$$
X = P_0,\ P_1,\ \dots,\ P_n = Q_n,\ Q_{n-1},\ \dots,\ Q_0 = Y
$$
is then a chain of the required form.

Suppose $z_1,\dots,z_{k-1}$ are already chosen so that $P_{k-1}$ and
$Q_{k-1}$ are regular (for $k=1$ this holds by assumption, since $P_0 = X$ and
$Q_0 = Y$). Consider all sets of $d-1$ columns of $P_{k-1}$ or of $Q_{k-1}$
with indices in $[n] \setminus \{k\}$. By regularity, each such set is linearly
independent and thus spans a linear hyperplane in $\R^d$. Since finitely many
hyperplanes cannot cover $\R^d$, we may choose $z_k$ lying on none of
them. Then $P_k$ is regular: any $d$ of its columns not containing $z_k$
occur already in $P_{k-1}$ and are linearly independent, while any $d$ columns
containing $z_k$ are linearly independent by the choice of $z_k$.
Likewise $Q_k$ is regular, which completes the induction.
\end{proof}

\subsection{Walls as transformed determinantal varieties}

The following result is needed in  Sections~\ref{sec:convex_hulls_random_walks} and~\ref{sec:bridge}.
Fix a finite set $\mathcal B \subset \operatorname{GL}_n(\R)$ of
invertible matrices.  The walls are now
indexed by pairs $(B,I)$, where  $B \in \mathcal B$ and
$I \subseteq [n]$ is a $d$-element set. The corresponding wall is defined as
$$
D_{B,I} := \{X \in \R^{d\times n} \mid \det((XB)_I) = 0\},
$$
where $(XB)_I$ denotes the $d\times d$ submatrix of $XB$ formed by the columns
indexed by $I$. As before, we call $X \in \R^{d\times n}$ \emph{regular} if it
belongs to no wall. Proposition~\ref{prop:pathexist_for_wendel} is the special
case $\mathcal B = \{\mathrm{Id}\}$ of the next statement, but the path
constructed below is not made of coordinate moves: instead, we show that the
configurations lying on two walls with the same $B$ can be covered by finitely
many images of smooth maps in $dn-2$ variables --- an explicit substitute for
``codimension two'' --- so that the cone over this set with vertex at a regular
point is covered by images of maps in $dn-1$ variables and is therefore a
Lebesgue null set. Any regular point $Z$ outside two such cones then serves as
the middle vertex of a two-segment path.

\begin{proposition}[A polygonal path avoiding double degeneracies]\label{prop:pathexists_general}
Let $d, n$ be positive integers, let $\mathcal B \subset \operatorname{GL}_n(\R)$
be finite, and let $X, Y \in \R^{d\times n}$ be regular. Then there exists a
piecewise linear path $\gamma\colon [-1,1] \to \R^{d\times n}$ consisting of at
most two line segments such that $\gamma(-1) = X$, $\gamma(1) = Y$, the path
meets $\bigcup_{B \in \mathcal B} \bigcup_{|I|=d} D_{B,I}$ at finitely many
times $t_1 < \dots < t_L$ only, and for every $\ell \in \{1,\dots,L\}$ and
every $B \in \mathcal B$ the point $\gamma(t_\ell)$ lies on at most one wall
$D_{B,I}$.
\end{proposition}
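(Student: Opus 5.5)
The plan follows the sketch given just before the statement. Let $W:=\bigcup_{B,I}D_{B,I}$, where $B\in\mathcal B$ and $|I|=d$, and let $\Sigma$ be the set of configurations lying on two distinct walls $D_{B,I}\neq D_{B,I'}$ with the same $B$. Each wall is the zero set of a nonzero polynomial $\det((XB)_I)$, which is nonzero because $B$ is invertible, so $W$ is a finite union of proper algebraic hypersurfaces. For the middle vertex $Z$ I need a regular point such that the segments $[X,Z]$ and $[Z,Y]$ meet $W$ only finitely often and never at a point of $\Sigma$. Finiteness is easy: restricted to a segment with regular endpoints, $t\mapsto\det((\gamma(t)B)_I)$ is a polynomial in $t$ that is nonzero at an endpoint, so it has finitely many zeros. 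The real work is to make sure the segments avoid $\Sigma$.

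The main step is to show that $\Sigma$ is contained in the union of finitely many images of smooth maps from $\R^{dn-2}$, or from manifolds of dimension at most $dn-2$. Since right multiplication by $B$ is a linear isomorphism of $\R^{d\times n}$, I may set $Y'=XB$ and work with the plain determinantal walls $D_I$ and $D_{I'}$ with $I\neq I'$. Configurations on $D_I\cap D_{I'}$ are described by cases.

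\emph{Case 1.} Some column set of size at most $d-1$ inside $I\cap I'$ is already dependent. This set has codimension at least $2$ by a direct parametrization.

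\emph{Case 2.} There are two independent relations. Pick $i\in I\setminus I'$ and $i'\in I'\setminus I$. The columns indexed by $I\setminus\{i\}$ span a subspace of dimension at most $d-1$. Parametrize it by choosing a hyperplane $h$ (which takes $d-1$ parameters), then put the columns indexed by $I$ into $h$, which costs one parameter per column. The columns in $I'\setminus I$ are handled in the same way with a second hyperplane $h'$, or via a dependence that involves $i'$.

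A cleaner route is to count with explicit maps. Write the dependent column as $x_i=\sum_{k\in I\setminus\{i\}}c_kx_k$ with $d-1$ coefficients replacing $d$ free coordinates, and do the same for $x_{i'}$ using $I'$. The free columns and coefficients then give a smooth surjection from a space of dimension $dn-2$ onto the generic part of $D_I\cap D_{I'}$. The degenerate part, where the columns of $I\setminus\{i\}$ are themselves dependent, is handled by choosing a different pivot column or by recursion, which again lands in images of dimension at most $dn-2$. I expect this bookkeeping to be the main obstacle: every point of $D_I\cap D_{I'}$ must be covered, including those where $I\cap I'$ is large or the dependencies share columns. For that I would do induction on the rank of the submatrix indexed by $I\cup I'$.

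Given this, the cone $C_X:=\{X+s(P-X):P\in\Sigma,\ s\in\R\}$ is covered by images of smooth maps $(p,s)\mapsto X+s(\phi(p)-X)$ from $(dn-1)$-dimensional domains. It is therefore Lebesgue-null, because smooth (locally Lipschitz) images of lower-dimensional spaces are null. The cone $C_Y$ is null in the same way, and $W$ is null as well. Choose $Z\notin W\cup C_X\cup C_Y$. If a point $P\in[X,Z]$ lay in $\Sigma$, then $P\neq X$ because $X$ is regular, and so $Z$ would lie on the line through $X$ and $P$, hence in $C_X$, which is a contradiction. The segment $[Z,Y]$ is treated symmetrically. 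The path $X\to Z\to Y$, reparametrized over $[-1,1]$, therefore meets the walls finitely often, never at its endpoints or at $Z$, and at each meeting time it lies, for each $B$, on at most one wall $D_{B,I}$. Here walls are counted as sets, so coinciding walls are harmless, matching how the statement is used later.
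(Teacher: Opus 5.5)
Your global strategy matches the paper's. You cover the doubly degenerate set $\Sigma$ by smooth images of $(dn-2)$-dimensional domains, so the cones over $\Sigma$ with apex $X$ or $Y$ are null, and you pick a regular $Z$ outside them. Finiteness then comes from the one-variable polynomials $t\mapsto\det((\gamma(t)B)_I)$, which do not vanish at regular endpoints. Your parametrization of the generic stratum is also correct, and slightly simpler than the paper's cofactor charts. With $i\in I\setminus I'$ and $i'\in I'\setminus I$, writing $x_i=\sum_{k\in I\setminus\{i\}}c_kx_k$ and $x_{i'}=\sum_{k\in I'\setminus\{i'\}}c'_kx_k$ is consistent because $i'\notin I$ and $i\notin I'$. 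The count $d(n-2)+2(d-1)=dn-2$ is right.

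The gap is exactly where you flagged it. You never cover the part of $D_I\cap D_{I'}$ on which the columns indexed by $I\setminus\{i\}$, or by $I'\setminus\{i'\}$, are linearly dependent. Your Case~1 only treats dependent column sets inside $I\cap I'$, and ``choosing a different pivot'' fails in general. For example, take $d=2$, $I=\{1,2\}$, $I'=\{3,4\}$. The configurations with $x_1=x_2=0$ and $\det(x_3\,|\,x_4)=0$ lie in $D_I\cap D_{I'}$, admit no admissible pivot in $I\setminus I'$, and escape Case~1 because $I\cap I'=\emptyset$. The induction on rank you mention is not carried out.

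No recursion is needed. The fact behind your Case~1 is exactly what is required, applied to $I\setminus\{i\}$ and $I'\setminus\{i'\}$ rather than to subsets of $I\cap I'$. Dependence of the $d-1$ vectors $(x_k)_{k\in I\setminus\{i\}}$ in $\R^d$ is by itself a codimension-two condition, regardless of the second determinant. One of them is then a combination of the other $d-2$, so these tuples are covered by $d-1$ polynomial images of $\R^{d(d-2)+(d-2)}=\R^{d(d-1)-2}$. Letting the remaining $d(n-d+1)$ entries be free gives maps from $\R^{dn-2}$. Doing the same for $I'\setminus\{i'\}$ and adding your generic map yields a three-piece cover of $D_I\cap D_{I'}$; this is precisely the content of Lemmas~\ref{lem:dependent_tuples} and~\ref{lem:two_minors}.

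Separately, your caveat about coinciding walls is vacuous. For fixed $B$, distinct $I\neq I'$ always give distinct walls: choose $X$ so that $XB$ has a zero column indexed by an element of $I\setminus I'$ while $(XB)_{I'}$ is invertible. This matters because Proposition~\ref{prop:crossing} needs uniqueness of the index set $I$, not merely of the wall as a set.
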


The proof is given at the end of the subsection. It rests on three lemmas.

\begin{lemma}[Null images]\label{lem:null_images}
Let $m < N$ be positive integers, let $V \subseteq \R^m$ be open, and let
$\Psi\colon V \to \R^N$ be a $C^1$ map. Then $\Psi(V)$ is a Lebesgue null set
in $\R^N$; in particular, it has empty interior.
\end{lemma}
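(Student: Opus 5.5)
The natural approach is to cover $V$ by countably many small closed cubes and use the local Lipschitz property of $C^1$ maps. First, write $V$ as a countable union of compact cubes $Q_k\subseteq V$; for instance, take all closed dyadic cubes contained in $V$. Since a countable union of null sets is null, it suffices to show that $\Psi(Q)$ is null for each such compact cube $Q$.

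Fix such a cube $Q$ of side length $a$. Because $\Psi$ is $C^1$ and $Q$ is compact and convex, the derivative $D\Psi$ is bounded on $Q$, say $\|D\Psi\|\le M$. The mean value inequality then gives $|\Psi(p)-\Psi(q)|\le M|p-q|$ for all $p,q\in Q$.

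Next, subdivide $Q$ into $k^m$ subcubes of side $a/k$. Each subcube has diameter $\sqrt m\,a/k$, so its image lies in a ball of radius $M\sqrt m\,a/k$. Such a ball has $N$-dimensional volume at most $C(M\sqrt m\,a)^N k^{-N}$. Summing over the subcubes, the outer measure of $\Psi(Q)$ is bounded by
\[
C (M\sqrt m\,a)^N k^{m-N}.
\]
This tends to $0$ as $k\to\infty$ because $m<N$. Hence $\Psi(Q)$ is null, and so is $\Psi(V)$.

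The empty-interior claim follows immediately: a nonempty open set in $\R^N$ has positive Lebesgue measure, so a null set cannot contain one.

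I do not expect any step to be a genuine obstacle. The only point requiring care is the exhaustion of $V$ by compact convex pieces, which is needed so that the Lipschitz bound holds uniformly on each piece; working with dyadic cubes handles this cleanly.
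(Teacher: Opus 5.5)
Your proposal is correct and follows essentially the same argument as the paper. Both exhaust $V$ by closed dyadic cubes, use the Lipschitz bound on each compact convex cube, subdivide into $k^m$ subcubes covered by balls of radius $O(1/k)$, and let $k\to\infty$ using $m<N$.
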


\begin{proof}
The open set $V$ is the union of the countably many dyadic cubes contained in
it, so it suffices to show that $\Psi(Q)$ is null for every closed cube
$Q \subset V$. On the compact convex set $Q$ the derivative of $\Psi$ is
bounded, so $\Psi$ is Lipschitz on $Q$ with some constant $L$. Subdivide $Q$,
of side length $s$, into $k^m$ congruent subcubes. Each subcube has diameter
$\sqrt m\, s/k$, so its image is contained in a ball of radius
$L \sqrt m\, s / k$. Hence $\Psi(Q)$ is covered by $k^m$ balls of total volume
at most $C k^{m-N}$, where $C$ does not depend on $k$; letting $k \to \infty$
and using $m < N$ shows that $\Psi(Q)$ is null. Finally, a null set has empty
interior, since a nonempty open set has positive measure.
\end{proof}

\begin{lemma}[Linearly dependent tuples]\label{lem:dependent_tuples}
Let $d \geq 2$. The set of linearly dependent tuples
$(a_1, \dots, a_{d-1}) \in (\R^d)^{d-1}$ is the union of $d-1$ images of
polynomial maps defined on $\R^{d(d-1)-2}$.
\end{lemma}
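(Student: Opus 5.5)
We need to show: the set of linearly dependent tuples $(a_1,\dots,a_{d-1})$ in $(\R^d)^{d-1}$ is the union of $d-1$ images of polynomial maps defined on $\R^{d(d-1)-2}$. The dimension count is $d(d-1)$ minus the codimension, which is $d-(d-1)+1=2$. The plan is to exhibit the parametrization directly.

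A tuple is linearly dependent exactly when some $a_j$ is a linear combination of the others. We may take the one with the largest index among those involved, i.e. the last vector with a nontrivial coefficient in a dependence relation. So for each $j\in\{1,\dots,d-1\}$ we define
\[
\Psi_j\colon (\R^d)^{j-1}\times\R^{j-1}\times(\R^d)^{d-1-j}\to(\R^d)^{d-1},
\]
sending $(a_1,\dots,a_{j-1},c_1,\dots,c_{j-1},a_{j+1},\dots,a_{d-1})$ to the tuple whose $j$-th entry is $\sum_{i<j}c_ia_i$ and whose other entries are the given ones. This map is polynomial (bilinear in the entry $j$). Every dependent tuple lies in the image of $\Psi_j$ for some $j$: take a nontrivial relation $\sum_i\lambda_ia_i=0$, let $j$ be the largest index with $\lambda_j\neq0$, and solve for $a_j$. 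For $j=1$ this means $a_1=0$. Conversely, every image point is dependent.

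The domain of $\Psi_j$ has dimension
\[
d(d-2)+(j-1)=d(d-1)-d+j-1\le d(d-1)-2,
\]
since $j\le d-1$. To get domains exactly equal to $\R^{d(d-1)-2}$, we precompose $\Psi_j$ with the coordinate projection $\R^{d(d-1)-2}\to\R^{d(d-1)-d+j-1}$, which discards the extra $d-j-1\ge0$ variables. This is still polynomial and has the same image. That gives $d-1$ maps, as claimed.

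The only delicate point is the count for $j=d-1$, where the bound $d(d-1)-2$ is attained with equality. This is why we use $\R^{d(d-1)-2}$ rather than a smaller space, and the case $d\ge2$ ensures $j$ ranges over a nonempty set. No further obstacle is expected.
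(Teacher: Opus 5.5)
Your proof is correct and takes essentially the same route as the paper: solve a nontrivial dependence relation for one vector with nonzero coefficient, and parametrize by the remaining vectors together with the coefficients. The only difference is cosmetic. The paper writes $a_j$ in terms of all $a_i$ with $i\neq j$, so every domain is exactly $(\R^d)^{d-2}\times\R^{d-2}\cong\R^{(d+1)(d-2)}=\R^{d(d-1)-2}$, whereas your choice of the largest index with $\lambda_j\neq 0$ uses only $a_i$ with $i<j$ and therefore needs padding by dummy coordinates when $j<d-1$.
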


\begin{proof}
If the tuple is linearly dependent, then $\sum_i \lambda_i a_i = 0$ with
$\lambda_j \neq 0$ for some $j$, whence $a_j = \sum_{i \neq j} t_i a_i$ with
$t_i := -\lambda_i/\lambda_j$. The set in question is therefore covered by the
images of the $d-1$ polynomial maps
$$
\Phi_j\colon \big((a_i)_{i \neq j},\ (t_i)_{i \neq j}\big)
\longmapsto (a_1, \dots, a_{d-1}),
\qquad
a_j := \sum_{i \neq j} t_i a_i,
$$
defined on $(\R^d)^{d-2} \times \R^{d-2} \cong \R^{(d+1)(d-2)}$, and
$(d+1)(d-2) = d(d-1) - 2$.
\end{proof}

\begin{lemma}[Two distinct maximal minors]\label{lem:two_minors}
Let $1 \leq d < n$ and let $I, J \subseteq [n]$ be distinct $d$-element
sets. Then the set
$$
\{W \in \R^{d\times n} \mid \det W_I = \det W_J = 0\}
$$
is covered by finitely many images of $C^\infty$ maps defined on open subsets of
$\R^{dn-2}$. Here $W_I$ denotes the $d\times d$ submatrix of
$W$ formed by the columns indexed by $I$.
\end{lemma}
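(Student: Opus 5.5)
We cover the set by parametrizing, in each case, one column (or a linear combination of columns) as a function of the others, so that two independent scalar constraints are eliminated. Split according to the size of $I\cap J$. Since $I\neq J$ and $|I|=|J|=d$, there are indices $p\in I\setminus J$ and $q\in J\setminus I$.

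The first step uses the fact that $\det W_I=0$ means the columns $w_i$, $i\in I$, are linearly dependent, and likewise for $J$. In the generic situation we would like to write $w_p$ as a combination of the other columns in $I$, and $w_q$ as a combination of the other columns in $J$. Each of these expressions replaces a free vector in $\R^d$ by $d-1$ coefficients, so it saves one dimension. Since $p\neq q$ and $p\notin J$, $q\notin I$, the two substitutions involve different free columns and do not interfere. The map
\[
\Big((w_i)_{i\neq p,q},\ (t_i)_{i\in I\setminus\{p\}},\ (s_j)_{j\in J\setminus\{q\}}\Big)\longmapsto W
\]
is therefore polynomial, with $w_p:=\sum_{i\in I\setminus\{p\}} t_i w_i$ and $w_q:=\sum_{j\in J\setminus\{q\}} s_j w_j$. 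One point needs checking: $w_q$ may appear among the $w_i$ with $i\in I\setminus\{p\}$ only if $q\in I$, which is excluded, so the definition is not circular. The domain has dimension $d(n-2)+2(d-1)=dn-2$.

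The second step handles the non-generic cases. The relation in $I$ may have zero coefficient at $p$, so that the columns $(w_i)_{i\in I\setminus\{p\}}$ are themselves dependent; similarly for $J$ and $q$. We cover the dependence in $I$ by the union over all choices of a pivot index $a\in I$ with nonzero coefficient, and the dependence in $J$ by a pivot $b\in J$, and form the analogous maps. The difficulty arises when the pivots coincide, $a=b\in I\cap J$. In that case we cannot substitute $w_a$ twice.

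The main obstacle is this subcase, and I would handle it as follows. If both relations can use a common pivot $a$, either the two relations are proportional or they are not. If they are not proportional, eliminating $w_a$ produces a nontrivial dependence among $(w_i)_{i\in (I\cup J)\setminus\{a\}}$ that involves some column outside $I\cap J$. Indeed, the combined relation has nonzero coefficient at some index of $I\triangle J$, unless both relations are supported in $I\cap J$. So we can choose a second pivot $c\neq a$ and parametrize $w_a$ and $w_c$ separately.

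If both relations are supported in $I\cap J$, then the columns indexed by $I\cap J$, which number at most $d-1$, are dependent. This is a codimension at least $2$ condition on those columns, since it means $k\le d-1$ vectors in $\R^d$ are dependent, which has codimension $d-k+1\ge 2$. Lemma~\ref{lem:dependent_tuples} covers this, possibly after padding the tuple to $d-1$ vectors with a further column, since dependence of a subfamily implies dependence of the padded tuple. Combining with free coordinates for the remaining columns gives maps from $\R^{dn-2}$.

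Altogether finitely many smooth (indeed polynomial) maps arise, indexed by the choices of pivots. Care is needed to verify that every $W$ in the set lies in the image of one of them, by a case analysis on the supports of minimal dependence relations. In the degenerate case $d=1$, $I=\{p\}$ and $J=\{q\}$ give simply $w_p=w_q=0$, which is a linear subspace of dimension $n-2$.
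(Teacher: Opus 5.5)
Your first step is correct, and it is a tidy alternative to the paper's Case~3. The paper solves $\det W_I=\det W_J=0$ for one coordinate of $w_a$ and one of $w_b$ via cofactor vectors, which gives rational maps on open sets. You instead get a single polynomial map on all of $\R^{dn-2}$.

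The gap is in your second step.

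\begin{itemize}
\item You treat coinciding pivots as the only difficulty, but distinct pivots $a\neq b$ that both lie in $I\cap J$ also fail. Then $b\in I\setminus\{a\}$ and $a\in J\setminus\{b\}$, so $w_a$ is to be defined through $w_b$ and $w_b$ through $w_a$. The ``analogous map'' is circular and not defined; your non-circularity check for $(p,q)$ used precisely $p\notin J$ and $q\notin I$.
\item Your treatment of $a=b$ is not a verified parametrization. If $w_c$ is expressed through the combined relation, which is supported on $(I\cup J)\setminus\{a\}$, this uses $2d-|I\cap J|-2$ coefficients in place of the $d$ coordinates of $w_c$. Together with $w_a$ you then save only $|I\cap J|+3-d$ dimensions, which is less than $2$ unless $|I\cap J|=d-1$. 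If you instead mean the original relations, you must pair $c$ with the relation whose support contains it and define the other pivot first.
\item You leave the covering itself unverified.
\end{itemize}

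None of this is needed. The points missed by your generic map are exactly the ones you already identified. If $(w_i)_{i\in I\setminus\{p\}}$ is linearly independent, then $\det W_I=0$ forces $w_p$ into its span, and likewise for $J$ and $q$. So every remaining $W$ has the $d-1$ columns indexed by $I\setminus\{p\}$ dependent, or the $d-1$ columns indexed by $J\setminus\{q\}$ dependent.

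Apply Lemma~\ref{lem:dependent_tuples} to these $d-1$ columns and leave the other $n-d+1$ columns free. This gives polynomial maps on $\R^{d(d-1)-2}\times\R^{d(n-d+1)}=\R^{dn-2}$, and together with your generic map it completes the covering for $d\geq 2$; you handle $d=1$ separately. This is exactly the paper's Cases~1 and~2. With it, no pivot analysis, combined relation or padding is required.
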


\begin{proof}
Suppose first that $d = 1$. Then $I = \{a\}$ and $J = \{b\}$ with $a \neq b$,
and the set is the linear subspace $\{W \mid w_a = w_b = 0\}$ of dimension
$n - 2$, the image of a linear embedding of $\R^{dn-2}$.

Now let $d \geq 2$. Since $I \neq J$ and $|I| = |J|$, we may choose
$a \in I \setminus J$ and $b \in J \setminus I$. Write $w_1, \dots, w_n \in \R^d$
for the columns of $W$. Expanding the determinants along the columns $w_a$ and
$w_b$ gives cofactor vectors $c_I(W), c_J(W) \in \R^d$ with
$$
\det W_I = \langle c_I(W), w_a\rangle,
\qquad
\det W_J = \langle c_J(W), w_b\rangle.
$$
The vector $c_I(W)$ depends only on the columns indexed by $I \setminus \{a\}$;
in particular, since $b \notin I$, it depends neither on $w_a$ nor on $w_b$,
and likewise for $c_J(W)$. Moreover, if the columns $(w_i)_{i \in I \setminus \{a\}}$
are linearly independent, then $c_I(W) \neq 0$: completing them by a vector $v$
to a basis of $\R^d$ yields $\langle c_I(W) , v \rangle \neq 0$, this being the
determinant of a basis.

Every $W$ with $\det W_I = \det W_J = 0$ falls into one of three cases, which
we parametrize separately.

\smallskip
\emph{Case 1: the columns $(w_i)_{i \in I \setminus \{a\}}$ are linearly
dependent.} By Lemma~\ref{lem:dependent_tuples}, these $d-1$ columns are
covered by $d-1$ images of polynomial maps in $d(d-1)-2$ variables; the
remaining $n-d+1$ columns of $W$ enter the parametrization as free variables.
This yields polynomial maps defined on
$\R^{d(d-1)-2} \times \R^{d(n-d+1)} = \R^{dn-2}$ whose images cover this case.

\smallskip
\emph{Case 2: the columns $(w_i)_{i \in J \setminus \{b\}}$ are linearly
dependent.} Symmetric to Case 1.

\smallskip
\emph{Case 3: both systems of columns are linearly independent.} Then
$c_I(W) \neq 0$ and $c_J(W) \neq 0$, so $(c_I(W))_r \neq 0$ and
$(c_J(W))_s \neq 0$ for some $r, s \in \{1,\dots,d\}$. Fix such a pair
$(r,s)$. Where these two coordinates do not vanish, the system
$\det W_I = \det W_J = 0$ is equivalent to
$$
(w_a)_r = -\frac{\sum_{\ell \neq r} (c_I(W))_\ell\, (w_a)_\ell}{(c_I(W))_r},
\qquad
(w_b)_s = -\frac{\sum_{\ell \neq s} (c_J(W))_\ell\, (w_b)_\ell}{(c_J(W))_s} .
$$
Since $c_I$ and $c_J$ do not depend on $w_a$ and $w_b$, the right-hand sides
involve neither $(w_a)_r$ nor $(w_b)_s$: they are rational functions of the
remaining $dn - 2$ entries of $W$, with nonvanishing denominators on the open
set
$$
V_{r,s} := \{(c_I)_r \neq 0,\ (c_J)_s \neq 0\} \subseteq \R^{dn-2}
$$
(both conditions involve only the retained entries). This part of the set is
thus the graph of a rational map over $V_{r,s}$, that is, the image of the
$C^\infty$ map $V_{r,s} \to \R^{d\times n}$ which copies the retained entries
and fills in the two solved ones. Letting $(r,s)$ range over the $d^2$
possible pairs completes the covering.
\end{proof}

\begin{proof}[Proof of Proposition~\ref{prop:pathexists_general}]
If $d \ge n$, the straight segment from $X$ to $Y$ has all the required properties: for $d > n$ there are no walls, while for $d = n$ the only $d$-element subset of $[n]$ is $[n]$ itself, so for every $B \in \mathcal B$ there is a single wall and the last condition holds automatically; moreover, each function $t \mapsto \det((\gamma(t)B)_{[n]})$ is a polynomial in one variable, not identically zero since its value at $t=-1$ is nonzero by regularity of $X$, so the walls are met at finitely many times only.

In the following we assume $d < n$.
Let
$$
\Sigma := \bigcup_{B \in \mathcal B}\ \bigcup_{I \neq J}
\big( D_{B,I} \cap D_{B,J} \big)
$$
denote the set of doubly degenerate configurations, the inner union being over
distinct $d$-element subsets $I, J \subseteq [n]$.

\medskip
\emph{Step 1: Covering}. We show that $\Sigma$ is covered by finitely many images of $C^\infty$ maps
defined on open subsets of $\R^{dn-2}$. The intersection
$D_{B,I} \cap D_{B,J}$ is the image of the set
$\{W \mid \det W_I = \det W_J = 0\}$ under the linear isomorphism
$W \mapsto WB^{-1}$ of $\R^{d\times n}$. If $\Psi\colon V \to \R^{d\times n}$
is one of the maps covering the latter set in Lemma~\ref{lem:two_minors}, then
$v \mapsto \Psi(v)B^{-1}$ is a $C^\infty$ map on the same domain, and the images of
these maps cover $D_{B,I} \cap D_{B,J}$. There are finitely many triples
$(B, I, J)$, so the claim follows.

\medskip
\emph{Step 2: Intermediate point}. We want to choose an intermediate point $Z$ such that the path traversing the segments $[X,Z]$ and $[Z,Y]$ does not meet $\Sigma$. We define two ``forbidden'' sets $C_X$ and $C_Y$ such that any $Z$ outside these sets satisfies this condition. For a point
$P \in \{X, Y\}$ consider the cone over $\Sigma$ with vertex $P$,
$$
C_P := \{P + \lambda(U - P) \mid U \in \Sigma,\ \lambda \geq 1\}.
$$
If $\Psi\colon V \to \R^{d\times n}$ is one of the maps from Step 1, defined on
an open $V \subseteq \R^{dn-2}$, then the part of $C_P$ generated
by $U \in \Psi(V)$ is contained in the image of the $C^1$ map
$$
(v, \lambda) \longmapsto P + \lambda\big(\Psi(v) - P\big),
$$
defined on the open set $V \times (0,\infty) \subseteq \R^{(dn-2)+1} = \R^{dn-1}$. By Lemma~\ref{lem:null_images}, $C_X \cup C_Y$ is a
Lebesgue null set and therefore has empty interior. The set of regular
configurations is open, being the complement of the finitely many zero sets of
the polynomials $W \mapsto \det((WB)_I)$, and nonempty, since it contains $X$.
Hence we may choose a regular configuration
$$
Z \in \R^{d\times n} \setminus (C_X \cup C_Y).
$$

\medskip
\emph{Step 3: the segments $[X,Z]$ and $[Z,Y]$ do not meet $\Sigma$.} Suppose
$U = X + t(Z - X) \in \Sigma$ for some $t \in [0,1]$. Since the regular point
$X$ lies on no wall, $t \neq 0$, and then
$Z = X + t^{-1}(U - X) \in C_X$, contrary to the choice of $Z$. Symmetrically,
a point $U = Y + t(Z - Y) \in \Sigma$ with $t \in (0,1]$ would give
$Z \in C_Y$.

\medskip
\emph{Step 4: conclusion.} Let $\gamma$ traverse $[X,Z]$ and then $[Z,Y]$,
reparametrized over $[-1,1]$. Fix $B \in \mathcal B$ and $I$. The restriction
of the polynomial $W \mapsto \det((WB)_I)$ to either segment is a polynomial
in one variable which is not identically zero, since its value at the endpoint
$X$ (respectively $Y$) is nonzero by regularity; hence it has finitely many
zeros. As there are finitely many pairs $(B, I)$, the path $\gamma$ meets the
walls at finitely many times only. Finally, at each such time
$\gamma(t_\ell) \notin \Sigma$ by Step 3; that is, for every $B \in \mathcal B$
at most one wall $D_{B,I}$ contains $\gamma(t_\ell)$.
\end{proof}

\section*{Acknowledgement}
ZK was supported by the DFG under Germany's Excellence Strategy EXC 2044 - 390685587, Mathematics M\"unster: \emph{Dynamics-Geometry-Structure}, by the DFG priority program SPP 2265 \emph{Random Geometric Systems}, and by the DFG Research Training Group \emph{Rigorous Analysis of
Complex Random Systems} (RTG 3027, Project Number 524444762). Part of this work was completed during a visit by A.\ Tarasov to
the University of M\"unster.

\section*{Declarations}

\subsection*{Statement on the use of generative AI}

The central ideas of this paper were developed by the authors several years
before the advent of generative AI.

Generative AI was used extensively during the subsequent development  of the paper. In particular, Claude  proposed a clearer
and substantially streamlined proof of the path-existence results in
Appendix~\ref{sec:paths_existence}, replacing an earlier incomplete argument. Claude also substantially
simplified and clarified several other proofs and suggested many essential ideas,
including the use of the moment curve in the proof of Wendel's theorem.
ChatGPT was used for mathematical and linguistic proofreading.

All suggestions produced by the AI were critically examined and
independently verified by the authors.

\subsection*{Conflict of interest statement}
The authors declare that they have no conflicts of interest.

\subsection*{Data availability statement}
We do not analyse or generate any datasets.

\bibliography{references}
\bibliographystyle{plainnat}

\end{document}